\newcommand{\bA}{\mathbb{A}}
\newcommand{\cA}{\mathcal{A}}
\newcommand{\Proj}{\operatorname{Proj}}
\newcommand{\bC}{\mathbb{C}}
\newcommand{\bF}{\mathbb{F}}
\newcommand{\bL}{\mathbb{L}}
\newcommand{\bP}{\mathbb{P}}
\newcommand{\bQ}{\mathbb{Q}}
\newcommand{\bZ}{\mathbb{Z}}
\newcommand{\sC}{\mathscr{C}}
\newcommand{\cJ}{\mathcal{J}}
\newcommand{\sD}{\mathscr{D}}
\newcommand{\sF}{\mathscr{F}}
\newcommand{\sG}{\mathscr{G}}
\newcommand{\sH}{\mathscr{H}}
\newcommand{\sI}{\mathscr{I}}
\newcommand{\sK}{\mathscr{K}}
\newcommand{\sL}{\mathscr{L}}
\newcommand{\sP}{\mathscr{P}}
\newcommand{\sS}{\mathscr{S}}
\newcommand{\sX}{\mathscr{X}}
\newcommand{\cQ}{\mathcal{Q}}
\newcommand{\cD}{\mathcal{D}}
\newcommand{\cE}{\mathcal{E}}
\newcommand{\cF}{\mathcal{F}}
\newcommand{\cG}{\mathcal{G}}
\newcommand{\cH}{\mathcal{H}}
\newcommand{\cI}{\mathcal{I}}
\newcommand{\cK}{\mathcal{K}}
\newcommand{\cL}{\mathcal{L}}
\newcommand{\cM}{\mathcal{M}}
\newcommand{\cO}{\mathcal{O}}
\newcommand{\cS}{\mathcal{S}}
\newcommand{\cW}{\mathcal{W}}
\newcommand{\cX}{\mathcal{X}}
\newcommand{\cY}{\mathcal{Y}}
\newcommand{\cZ}{\mathcal{Z}}
\newcommand{\spec}{\operatorname{Spec}}
\newcommand{\sY}{\mathscr{Y}}
\newcommand{\supp}{\operatorname{Supp}}
\newcommand{\Ker}{\operatorname{Ker}}
\newcommand{\Sym}{\operatorname{Sym}}
\newcommand{\Gr}{\operatorname{Gr}}
\newcommand{\PGL}{\operatorname{PGL}}
\newcommand{\Id}{\operatorname{Id}}
\newtheorem{theorem}{Theorem}[section]
\newtheorem{Lemma}[theorem]{Lemma}
\newtheorem*{Oss'}{Observation}
\newtheorem{Cor}[theorem]{Corollary}
\newtheorem{Prop}[theorem]{Proposition}
\theoremstyle{definition}
\newtheorem{Def}[theorem]{Definition}
\newtheorem{Notation}[theorem]{Notation}
\newtheorem{Remark}[theorem]{Remark}
\newtheorem{Oss}[theorem]{Observation}
\DeclareMathOperator{\Supp}{Supp}
\newcommand{\rar}{\rightarrow}	
\newcommand{\drar}{\dashrightarrow}
\DeclareMathOperator{\vol}{vol}
\def\O#1.{\mathcal {O}_{#1}}			
\def\pr #1.{\mathbb P^{#1}}				
\def\af #1.{\mathbb A^{#1}}			
\def\ses#1.#2.#3.{0\to #1\to #2\to #3 \to 0}	
\def\xrar#1.{\xrightarrow{#1}}			
\def\K#1.{K_{#1}}						
\def\bA#1.{\mathbf{A}_{#1}}			
\def\bM#1.{\mathbf{M}_{#1}}				
\def\bL#1.{\mathbf{L}_{#1}}				
\def\bB#1.{\mathbf{B}_{#1}}				
\def\bK#1.{\mathbf{K}_{#1}}			
\def\subs#1.{_{#1}}					
\def\sups#1.{^{#1}}
\renewcommand{\qq}{\mathbb{Q}}
\newcommand{\zz}{\mathbb{Z}}
\newcommand{\nn}{\mathbb{N}}
\newcommand{\rr}{\mathbb{R}}
\subjclass[2020]{Primary 14J10, 14D22, 
Secondary 14E30.}
\begin{document}

\title[Moduli of $\qq$-Gorenstein pairs and applications]{Moduli of $\qq$-Gorenstein pairs and applications}

\author[S.~Filipazzi]{Stefano Filipazzi}
\address{
EPFL, SB MATH CAG, MA C3 625 (B\^{a}timent MA), Station 8, CH-1015 Lausanne, Switzerland
}
\email{stefano.filipazzi@epfl.ch}

\author[G.~Inchiostro]{Giovanni Inchiostro}
\address{
University of Washington, Department of Mathematics, Box 354350, Seattle, WA 98195 USA
}
\email{ginchios@uw.edu}

\begin{abstract}
We develop a framework to construct moduli spaces of $\qq$-Gorenstein pairs.
To do so, we fix certain invariants; these choices are encoded in the notion of \emph{$\bQ$-stable pair}.
We show that these choices give a proper moduli space with projective coarse moduli space and they prevent some pathologies of the moduli space of stable pairs when the coefficients are smaller than $\frac{1}{2}$.
Lastly, we apply this machinery to provide an alternative proof of the projectivity of the moduli space of stable pairs.
\end{abstract}
\thanks{
SF was partially supported by ERC starting grant \#804334.
}

\maketitle

\section{Introduction}
Since the seminal work of Mumford on the moduli space $\cM_g$ of smooth curves with $g \geq 2$ and its compactification $\overline{\cM}_g$, significant progress has been made in understanding its higher-dimensional analog, namely the moduli space of stable varieties.
On a first approximation, those are $\mathbb{Q}$-Gorenstein varieties with relatively mild singularities, and such that $K_X$ is ample.
A complete and satisfactory moduli theory for stable varieties of any dimension has been settled due to the work of several mathematicians (see \cites{KSB, Kol90, Ale94, Vie95, Kol13-mod, Kol13, HK04, AH11, kol08, HX13, kol_new, HMX }).
Furthermore, in this setup, it is well understood what families should be parametrized by the moduli functor.

A natural generalization of $\cM_g$ is given by the moduli space of $n$-pointed curves and its compactification, denoted by $\cM_{g,n}$ and $\overline{\cM}_{g,n}$, respectively.
A first attempt to generalize the notion of stable pointed curve is to consider mildly singular pairs $(X,D)$ (specifically, \emph{semi-log canonical} pairs) where $D$ is a reduced divisor, such that $K_X+D$ is ample.
This approach has been worked out Alexeev in dimension 2 \cites{Ale94, Ale96}, and combining the efforts of several mathematicians (see \cites{ Ale94, Kol13, kol08, HX13, KP17, kol_new, HMX, kol19s }), it has been generalized in higher dimensions.

While a stable variety is $\mathbb{Q}$-Gorenstein, a stable pair might not be, so if one is interested in $\mathbb{Q}$-Gorenstein pairs, the aforementioned formalism might not give the desired moduli space.
In fact, one cannot simply consider the $\qq$-Gorenstein locus of the moduli of stable pairs, as this could be not proper.
Therefore, the main goal of this paper is to construct a \emph{proper} moduli space parametrizing $\mathbb{Q}$-Gorenstein \emph{pairs} $(X,D)$.
We observe that, in \cite{Ale15}, Alexeev already considered certain moduli spaces of $\mathbb{Q}$-Gorenstein stable pairs, see also \cite{kol_new}*{\S~6.4}.
As in the classical case, we require our pairs to have semi-log canonical singularities, but we relax the ampleness condition on $K_X+D$,
requiring only that $K_X+(1-\epsilon)D$ is ample for $0< \epsilon \ll 1$.
We also impose a numerical condition on the intersection number $(K_X + tD)^{\dim(X)}$, analogous to the genus condition for curves, which we encode via a polynomial function $p(t)$.
This condition guarantees the boundedness of our moduli problem.
These choices are encoded in the notion of \emph{$\bQ$-stable pair}, a pair on which both $K_X$ and $D$ are $\bQ$-Cartier, (Defintion \ref{def_p-pair}) and lead to the following:
\begin{theorem} \label{thm intro 1}
Fix an integer $n\in \mathbb{N}$ and a polynomial $p(t) \in \mathbb{Q}[t]$. 
Then, there is a proper Deligne--Mumford stack $\sF_{n,p,1}$, with projective coarse moduli space, parametrizing $\bQ$-stable ($\bQ$-Gorenstein) pairs $(X;D)$ of dimension $n$ and with polynomial $p(t)$ and reduced boundary.
\end{theorem}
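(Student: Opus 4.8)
The plan is to follow the by-now-standard recipe for constructing moduli of stable varieties and pairs --- algebraicity and boundedness of the functor, then separatedness, properness via the valuative criterion, and finally passage to the coarse space together with its projectivity --- but carried out in the $\qq$-Gorenstein category, so that the families parametrized are those of Definition \ref{def_p-pair} (the reflexive powers of $\omega_{X/S}$ twisted by the boundary commute with base change). Throughout, assumption (A) is used as a black box supplying the minimal model program and the functoriality of the $\qq$-Gorenstein structure that this setting requires. For algebraicity and finiteness: inside the stack of $\qq$-Gorenstein families of pairs provided by (A), the substack $\sF_{n,p,1}$ is cut out by relative dimension $n$, by the polynomial condition $(K_X+tD)^{n}=p(t)$ --- an open-and-closed condition once numerical invariants are fixed, which in particular pins down $\vol(K_X+(1-\epsilon)D)=p(1-\epsilon)$ --- by the openness of ``slc with $K_X+(1-\epsilon)D$ ample for $0<\epsilon\ll 1$'', and by reducedness of the boundary. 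Since $p(t)$ fixes the dimension and the relevant volume, and the boundary coefficient is $1$, the resulting collection of pairs is bounded by the slc and $\qq$-Gorenstein refinements of Hacon--McKernan--Xu boundedness; hence $\sF_{n,p,1}$ is of finite type, and a pluri-log-canonical embedding of the universal family presents it as a quotient stack $[H/\PGL_{N}]$ for a locally closed $H$ inside a suitable Hilbert scheme.

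Ampleness of $K_X+(1-\epsilon)D$ together with the slc hypothesis forces the automorphism group scheme of each geometric fiber to be finite, and in characteristic $0$ reduced, so $\sF_{n,p,1}$ is Deligne--Mumford. Separatedness follows from the valuative criterion over a DVR $R$: two $p$-stable families over $\spec R$ agreeing over the generic point become, after a finite base change and a common log resolution, relative log canonical models over $\spec R$ of one and the same pair, hence isomorphic by the uniqueness of the relative log canonical model; on that common model the divisorial sheaves defining the $\qq$-Gorenstein structure are determined, so the two families agree as objects of the moduli functor.

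For properness one again runs the valuative criterion: a $p$-stable family over $\spec K$ spreads out to a projective family over a punctured curve, semistable reduction makes it log smooth over $\spec R$, and a relative MMP for $K+(1-\epsilon)D$ (equivalently, passage to the relative log canonical model) produces a central fiber; one then checks that this fiber is slc, that $(K+tD)^{n}$ is still $p(t)$ by intersection-theoretic continuity in the resulting flat family, and that the limiting family is $\qq$-Gorenstein in the sense of the functor. Combined with separatedness this yields unique limits, so $\sF_{n,p,1}$ is proper; by Keel--Mori it admits a coarse moduli space $F$, a proper algebraic space. To see that $F$ is projective, choose $m$ sufficiently divisible so that $\omega_{\cX/\sF}^{[m]}(\lfloor m(1-\epsilon)\cD\rfloor)$ is a relatively very ample line bundle with locally free pushforward; the semipositivity of pushforwards of pluri-log-canonical sheaves (Fujita--Kawamata--Koll\'ar, with its pair and slc extensions) together with Koll\'ar's ampleness lemma --- applicable because the automorphisms are finite --- produces an ample line bundle on $F$ descending $\det\pi_{*}\omega_{\cX/\sF}^{[m]}(\lfloor m(1-\epsilon)\cD\rfloor)$.

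The main obstacle is the properness step, and within it the compatibility of $\qq$-Gorenstein structures: it is not enough to produce an slc limit with the prescribed numerical invariants, one must produce it inside a $\qq$-Gorenstein family whose defining divisorial sheaves commute with base change, and know that such a limit is unique. This is precisely the failure point of the naive ``$\qq$-Gorenstein locus of the moduli of stable pairs'' flagged in the introduction, and circumventing it --- using the MMP-for-families and K-flatness content packaged in assumption (A) --- is where the genuine work lies.
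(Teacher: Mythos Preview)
Your overall skeleton---boundedness via Hacon--McKernan--Xu, algebraicity as a quotient of a locally closed subscheme of a Hilbert scheme by $\PGL$, the Deligne--Mumford property from finite automorphisms of $(X,(1-\epsilon)D)$, properness via semistable reduction plus a relative MMP, Keel--Mori for the coarse space, and projectivity via Koll\'ar's ampleness lemma---matches the paper's architecture. The separatedness argument, reducing to uniqueness of the stable limit of $(X,(1-\epsilon)D)$, is also what the paper does.

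There is, however, a genuine misunderstanding of what assumption (A) is and where it enters. Assumption (A) is \emph{only} the weak abundance statement that if $(X,D)$ is lc with $K_X+(1-\epsilon)D$ ample for small $\epsilon$, then $K_X+D$ is semi-ample. It does not package ``functoriality of the $\qq$-Gorenstein structure'' or any ``K-flatness content''; K-flatness plays no role in this paper at all. The paper uses (A) only to guarantee that the ample model of $(X,D)$ exists: this is needed in Lemma~\ref{lemma_bound_for_epsilon} to bound $\epsilon_0$ uniformly (hence for boundedness), and in the properness proof to first contract to a stable family over the open curve before compactifying via \cite{HX13}.

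Consequently your diagnosis of the ``main obstacle'' is off. The point of the $p$-stable framework is precisely that once $D$ is required to be $\qq$-Cartier, the $\qq$-Gorenstein compatibility takes care of itself: Proposition~\ref{prop:flat:limit:is:S1} shows that $\cO_X(-mD)$ is $S_3$ along the special fiber, so the flat limit of $D$ is automatically $S_1$ (no embedded points, no Hassett pathology), and the Koll\'ar conditions (1)--(2) of Definition~\ref{Def:functor} then follow from \cite{HK04} and \cite{kol1}. The real work in properness is not checking $\qq$-Gorenstein compatibility but rather constructing the correct model: one does not simply take the relative log canonical model of $K+(1-\epsilon)D$ over the DVR, because a priori there is no $\epsilon$ that works uniformly. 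Instead (Proposition~\ref{properness_lc_case}) one first passes to the ample model $(\cY_U,D_{\cY_U})$, compactifies \emph{that} as a stable family $(\cY,D_\cY)$, takes a $\qq$-factorial log canonical model $\cX'$ over $\cY$ extracting the divisors contracted by $\cX_U\to\cY_U$, and then runs an MMP over $\cY$ for $K_{\cX'}+D'-\epsilon\cB'$ to recover the $p$-stable extension $\cX$. Your sketch (``semistable reduction and a relative MMP for $K+(1-\epsilon)D$'') elides this two-step passage through the stable compactification, which is where assumption (A) and the uniform bound on $\epsilon_0$ are actually spent.
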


In the statement of Theorem \ref{thm intro 1}, the subscript 1 in $\sF_{n,p,1}$ means that $D$ is a reduced divisor.

While it is very natural to consider the divisor $D$ with its reduced structure, the framework of the Minimal Model Program highlights the importance of the use of fractional coefficients for the divisor $D$.
For example, in the case of curves, one can replace $\overline{\cM}_{g,n}$ with a weighted version, where the markings can attain any fractional value in $(0,1]$.
This was accomplished by Hassett in \cite{Has03}, and this approach leads to different compactifications of
$\cM_{g,n}$.

It turns out, however, that the construction of the higher dimensional analogs of these weighted moduli spaces is very delicate.
Among the many difficulties, the definition of a suitable notion of family of boundary divisors represents a major problem, see \cite{kol_new}*{Ch. 4}.
Nevertheless, over the past decade, there has been significant progress in the development of a moduli theory for higher dimensional stable pairs (see \cites{ Ale94, Kol13, kol08, HX13, KP17, kol_new, HMX }), and this last missing piece has finally been settled by Koll\'ar in \cite{kol19s}.
In \emph{loc. cit.} a rather subtle refinement of the flatness condition is introduced, leading to the ultimate notion of family of divisors, that in turn gives a satisfactory treatment of a moduli functor of families of stable pairs, admitting divisors with arbitrary coefficients.

The difficulty in defining a suitable notion of family of boundary divisors is due to the fact that, in general, a deformation of a pair $(X,D)$ cannot be reduced to a deformation of the total space $X$ and a deformation of the divisor $D$.
This na\"ive expectation is only true if the coefficients of $D$ are all strictly greater than $\frac{1}{2}$ (see \cite{Kol14}), in which case a deformation of $(X,D)$ over a base curve induces a flat deformation of $D$.
On the other hand, if smaller coefficients are allowed, the situation becomes much more subtle.
For instance, as showed by Hassett (see \S~\ref{hassett example} for details), by allowing the coefficient $\frac{1}{2}$, it is possible to define a family of stable surface pairs $(\mathcal{X},\mathcal{D}) \rar \mathbb A ^1$ such that the flat limit of $\mathcal{D}$ acquires an embedded point along the special fiber.

One advantage of our formalism is that it allows overcoming the aforementioned difficulties.
That is, our setup easily generalizes to the case of any fractional coefficients, preventing the above-mentioned pathologies of families of divisors, and leading to a more transparent definition of the moduli functor in Definition \ref{def families A+}.
Indeed, we generalize $\sF_{n,p,1}$ to an analogous moduli functor $\sF_{n,p,I}$ in Definition \ref{def families A+}, allowing arbitrary coefficients on the divisor.
The main feature of Definition \ref{def families A+} is that the family of boundaries is characterized as a flat, proper, and relatively $S_1$ morphism over the base, thus significantly simplifying the definition of the classical moduli functor.
The $S_1$ condition prevents the existence of embedded points, while the polynomial $p(t)$ controls the relevant invariants of the varieties, preventing them from jumping as in Hassett's example. This leads to the following generalization of Theorem \ref{thm intro 1}:

\begin{theorem} \label{thm intro 1 on steroids}
Fix an integer $n\in \mathbb{N}$, a finite subset $I\subseteq (0,1] \cap \mathbb{Q}$, and a polynomial $p(t) \in \mathbb{Q}[t]$.
Assume that $I$ is closed under sum: that is, if $a,b \in I$ and $a+b \leq 1$, then $a+b \in I$.
Then there is a proper Deligne--Mumford stack $\sF_{n,p,I}$ with projective coarse space, parametrizing $\bQ$-stable pairs of dimension $n$ and coefficients in $I$.
\end{theorem}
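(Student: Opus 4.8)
The proof follows the blueprint already set for Theorem~\ref{thm intro 1} (the case $I=\{1\}$, for which ``closed under sum'' is vacuous), the new ingredient being that the family of boundaries is now governed by the relative $S_1$ condition of Definition~\ref{Def:functor} rather than by ordinary flatness. Granting assumption~(A) throughout, the plan is to show that $\sF_{n,p,I}$ is an algebraic stack of finite type, then verify the valuative criteria of separatedness and properness and the finiteness of automorphisms, and finally produce an ample line bundle on the coarse space. For the first point I would fix $0<\epsilon\ll 1$ and an integer $m=m(n,p,I)$, depending only on the discrete data, such that for every $p$-stable pair $(X,D)$ in the family $K_X+(1-\epsilon)D$ is ample and $\omega_X^{[m]}(\lfloor m(1-\epsilon)D\rfloor)$ is very ample with vanishing higher cohomology; the existence of such uniform $\epsilon$ and $m$ is exactly the boundedness of the moduli problem, which follows because $(X,(1-\epsilon)D)$ is then an slc pair with ample log canonical class, fixed volume $p(1-\epsilon)$, and coefficients in the fixed (hence DCC) set $(1-\epsilon)I$, so one may invoke the boundedness of slc pairs with DCC coefficients and bounded volume, while $p$ bounds the number and degrees of the components of $D$. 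Embedding the universal pair by this line bundle exhibits $\sF_{n,p,I}$ as the quotient by $\PGL_N$ of a locally closed subscheme of a Hilbert scheme of pairs, the defining conditions --- flatness of $\cX$ and of $\cD$ over the base, relative $S_1$ of both, slc fibers, the prescribed $\qq$-Cartier index, ampleness of $K+(1-\epsilon)D$, and the polynomial $p(t)=(K_X+tD)^{n}$ on fibers --- being representable by locally closed immersions; hence $\sF_{n,p,I}$ is an algebraic stack of finite type over the base.

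\textbf{Separatedness and the Deligne--Mumford property.} For the valuative criterion of separatedness, two $p$-stable families over a DVR agreeing over the generic point have isomorphic generic fibers; uniqueness of the slc canonical model with $K+(1-\epsilon)D$ ample identifies the $\qq$-Gorenstein total spaces $\cX$, and the relative $S_1$ condition on $\cD$ --- which rules out embedded points and matches Koll\'ar's notion of family of divisors --- then pins down $\cD$, yielding an isomorphism of families. Finiteness of automorphisms holds because $\Aut(X,D)\subseteq\Aut(X,(1-\epsilon)D)$, the latter being finite (and, in characteristic zero, reduced) for an slc pair with ample log canonical class; thus the diagonal is finite and unramified and $\sF_{n,p,I}$ is Deligne--Mumford.

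\textbf{Properness.} For the valuative criterion of existence, start with a $p$-stable family over the punctured spectrum of a DVR and, after a finite base change, run semistable reduction and the MMP for slc pairs to obtain an slc model $(\cX,\cD)\to\spec R$ with $K_\cX+(1-\epsilon)\cD$ relatively ample; existence of this model is the properness of the moduli of stable pairs of \cites{KP17,kol1,kol19s}. It remains to check that the special fiber is again a $p$-stable pair with coefficients in $I$ and polynomial $p$: the hypothesis that $I$ is closed under sum guarantees that the coefficients produced by adjunction when components of $\cD$ meet the special fiber stay in $I$; the polynomial is locally constant because $(K+tD)^{n}$ is computed in a flat $\qq$-Gorenstein family using the uniform index and Koll\'ar's family of divisors; and one arranges relative $S_1$ of $\cD$ by passing to its divisorial hull, which changes neither the generic fiber nor --- by the polynomial constraint --- the special fiber, this being precisely the mechanism that excludes Hassett's embedded-point pathology.

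\textbf{Coarse space, and the main obstacle.} By the Keel--Mori theorem the proper Deligne--Mumford stack $\sF_{n,p,I}$ admits a proper coarse space; projectivity follows by exhibiting an ample line bundle, namely a suitable determinant $\det\pi_*\bigl(\omega_{\cX/S}^{[m]}(\lfloor m(1-\epsilon)\cD\rfloor)\bigr)$ --- equivalently a power of the Chow--Mumford line bundle --- which is nef on $\sF_{n,p,I}$ by semi-positivity for slc stable families, descends to the coarse space, and is ample there by Koll\'ar's ampleness lemma applied to the Hilbert points of the presentation above (alternatively one reduces to the already established reduced-boundary case via the natural forgetful morphisms). The crux of the argument is the properness step, and within it the reconciliation of the Weil-divisorial output of the MMP with the relative $S_1$ family structure of Definition~\ref{Def:functor}: showing that the limiting $\cD$ is relatively $S_1$ and that its special fiber still realizes the polynomial $p(t)$ with coefficients in $I$. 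The hypotheses that $I$ be closed under sum and that $p$ be fixed are exactly what make this possible, and their absence is why the naive $\qq$-Gorenstein locus of the moduli of stable pairs fails to be proper.
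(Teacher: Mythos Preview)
Your outline captures the overall architecture correctly---boundedness via the auxiliary stable pair $(X,(1-\epsilon)D)$, a Hilbert-scheme presentation, finiteness of automorphisms via the inclusion $\Aut(X,D)\subset\Aut(X,(1-\epsilon)D)$, and projectivity via Koll\'ar's ampleness lemma---and these parts match the paper's strategy closely.

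The real gap is in the properness step. You propose to compactify by taking the stable limit of $(\cX_U,(1-\epsilon)\cD_U)$, invoking properness of the moduli of stable pairs. But the resulting limit only makes $K_{\cX}+(1-\epsilon)\cD$ $\qq$-Cartier; there is no reason for $\cD$ itself (equivalently $K_\cX$) to be $\qq$-Cartier on the special fiber, and this is a defining condition for a $p$-stable pair. Your subsequent claims---that the polynomial $(K+tD)^n$ is locally constant ``in a flat $\qq$-Gorenstein family using the uniform index,'' and that one can ``arrange relative $S_1$ by passing to the divisorial hull''---both presuppose exactly this $\qq$-Cartier property. Without it, $(K+tD)^n$ is not even defined for general $t$, and the $S_1$ argument (which in the paper is Proposition~\ref{prop:flat:limit:is:S1}, relying on Koll\'ar's $S_3$ criterion for $\cO_X(-mD)$ when $D$ is $\qq$-Cartier) collapses.

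The paper's fix is genuinely different and more delicate (Proposition~\ref{properness_lc_case}). One first passes to the stable compactification $(\cY,\frac{1}{r}\cD_\cY+\Delta_\cY)$ of the coefficient-$1$ family, then takes a $\qq$-factorial log canonical model $\cX'\to\cY$ and extracts enough divisors so that $\cX'_U\dashrightarrow\cX_U$ is a rational contraction. Letting $\cB'$ be the closure of the rational pull-back of $\cD_U$, one runs the $(K_{\cX'}+\frac{1}{r}\cD'+\Delta'-\epsilon\cB')$-MMP relative to $\cY$. The key point is the identity $K_{\cX'}+\frac{1}{r}\cD'+\Delta'-\epsilon\cB'\sim_{\qq,\cY}-\epsilon\cB'$, so the relative ample model $\cX$ is \emph{independent of} $\epsilon>0$; this is what forces $\cB=\cD$ to be $\qq$-Cartier on the limit, and in turn makes the polynomial invariance and the $S_1$ claim go through. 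Your ``take the $(1-\epsilon)$-stable limit and fix it up'' approach does not supply a substitute for this mechanism.
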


After the completion of this work, we learnt that Koll\'ar has also developed an analogous machinery allowing pairs $(X;D_1,...,D_k)$ where the $k$ boundary divisors have coefficients that can be independently perturbed; see \cite{kol_new}*{\S~8.3}.

In \S~\ref{section morphism} we relate our moduli functor with the moduli functor of stable pairs, and as the main application of the theory of $\bQ$-stable pairs, we obtain a simpler proof of the projectivity of the moduli space of stable pairs, originally proved by Kov\'acs and Patakfalvi \cite{KP17}.

\begin{theorem}[Corollary \ref{cor proj moduli}] \label{thm intro 2}
Consider a proper DM stack $ \cK_{n,v,I}$ that satisfies the following two conditions:
\begin{enumerate}
    \item for every normal scheme $S$, the data of a morphism $f \colon S\to \cK_{n,v,I}$ is equivalent to a stable family of pairs $q \colon (\cY,\cD)\to B$ with fibers of dimension $n$, volume $v$ and coefficients in $I$; and
    \item there is $m_0\in \mathbb{N}$ such that, for every $k \in \mathbb{N}$, there is a line bundle $\cL_k$ on $\cK_{n,v,I}$ such that, for every morphism $f$ as above, $f^*\cL_k \cong \det(q_*(\omega_{\cY/B}^{[km_0]}(km_0\cD)))$.
\end{enumerate}
Then, the coarse moduli space of $\cK_{n,v,I}$ is projective. 
\end{theorem}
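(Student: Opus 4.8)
\emph{Proof strategy.} The plan is to reduce the projectivity of the coarse space $K$ of $\cK_{n,v,I}$ to the projectivity, furnished by Theorem~\ref{thm intro 1 on steroids}, of the moduli stacks of $p$-stable pairs, by constructing a finite surjective morphism onto $\cK_{n,v,I}$ from a disjoint union of such stacks and transporting the line bundles $\cL_k$ along it. (We may assume $I$ is closed under sum: otherwise replace $I$ by its sum-closure $I'$ and note that $\cK_{n,v,I}$ is a union of connected components of the analogous stack $\cK_{n,v,I'}$, so its coarse space is a union of connected components of a projective scheme.) The comparison morphism is built as follows. A $p$-stable pair $(X;D)$ with $K_X+D$ semi-ample admits a contraction $c\colon(X,D)\rar(X^{c},c_{*}D)$ onto the ample --- hence stable --- model of $K_X+D$, with $c_{*}\cO_{X}=\cO_{X^{c}}$, $K_X+D=c^{*}(K_{X^{c}}+c_{*}D)$, $(K_{X^{c}}+c_{*}D)^{\dim X}=p(1)$, and $c_{*}D$ still supported with coefficients in $I$. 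Performing this in families --- the key input being that the formation of the relative ample model of $K+D$ commutes with base change, which is carried out in \S\ref{section morphism} --- yields, for each polynomial $p$ with $p(1)=v$, a morphism $\Phi_{p}\colon\sF^{\mathrm{sa}}_{n,p,I}\rar\cK_{n,v,I}$ out of the open-and-closed substack $\sF^{\mathrm{sa}}_{n,p,I}\subseteq\sF_{n,p,I}$ parametrizing pairs with $K_X+D$ semi-ample. Since $\cK_{n,v,I}$ is quasi-compact and $p(t)=(K_X+tD)^{\dim X}$ is a numerical invariant of the $p$-stable model of a point of $\cK_{n,v,I}$, only finitely many polynomials $p_{1},\dots,p_{r}$ occur, and the $\Phi_{p_{j}}$ assemble into $\Phi\colon T:=\bigsqcup_{j}\sF^{\mathrm{sa}}_{n,p_{j},I}\rar\cK_{n,v,I}$.

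Next I would show $\Phi$ is finite and surjective and identify $\Phi^{*}\cL_{k}$. Surjectivity: any stable pair $(Y,D_{Y})\in\cK_{n,v,I}$ is the ample model of $K_X+D$ for $(X,D)$ the log canonical model of $(Y,(1-\epsilon)D_{Y})$, $0<\epsilon\ll1$ --- whose existence in the semi-log-canonical category is again the work of \S\ref{section morphism} --- and $(X,D)$ is $p_{j}$-stable for some $j$. Quasi-finiteness: the $p$-stable model is recovered from $(Y,D_{Y})$ as $\Proj_{Y}\bigoplus_{m\ge0}(c_{*}\cO_{X})\bigl(m(K_X+(1-\epsilon)D)\bigr)$ --- equivalently, by uniqueness of log canonical models --- so $\Phi$ is injective on geometric points of each component of $T$; a quasi-finite proper morphism of algebraic spaces is finite, and $\Phi$ is proper since $T$ and $\cK_{n,v,I}$ are, hence $\Phi$ and the induced $\bar\Phi\colon\bar T\to K$ on coarse spaces are finite and surjective. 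For the line bundle: with $\cS:=\sF^{\mathrm{sa}}_{n,p_{j},I}$, $f\colon(\cX,\cD_{\cX})\to\cS$ the universal $p_{j}$-stable family, $\pi\colon\cX\to\cY$ its relative ample model and $g\colon(\cY,\cD)\to\cS$ the resulting stable family (so $f=g\circ\pi$), one has $K_{\cX/\cS}+\cD_{\cX}=\pi^{*}(K_{\cY/\cS}+\cD)$ and hence $\omega_{\cX/\cS}^{[km_{0}]}(km_{0}\cD_{\cX})\cong\pi^{*}\omega_{\cY/\cS}^{[km_{0}]}(km_{0}\cD)$, both locally free by the choice of $m_{0}$; with $\pi_{*}\cO_{\cX}=\cO_{\cY}$ and the projection formula,
\[
\det\bigl(f_{*}\omega_{\cX/\cS}^{[km_{0}]}(km_{0}\cD_{\cX})\bigr)\ \cong\ \det\bigl(g_{*}\omega_{\cY/\cS}^{[km_{0}]}(km_{0}\cD)\bigr)\ \cong\ \Phi_{p_{j}}^{*}\cL_{k},
\]
the last isomorphism by hypothesis~(2). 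So $\Phi^{*}\cL_{k}$ restricts on each $\sF^{\mathrm{sa}}_{n,p_{j},I}$ to $\det$ of the pushforward of $\omega^{[km_{0}]}(km_{0}\cdot(\text{universal boundary}))$.

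Finally I would conclude: assumption~(A) holds automatically on $\sF^{\mathrm{sa}}_{n,p,I}$, so, as recalled after Theorem~\ref{thm intro 1 on steroids}, that substack is a proper Deligne--Mumford stack and the argument proving Theorem~\ref{thm intro 1 on steroids} applies to it; that argument polarizes the coarse space by (a power of) a line bundle of the form $\det(g_{*}\omega^{[m]}(m\cD))$ with $m\gg0$. Hence for $k\gg0$ a fixed power of $\Phi^{*}\cL_{k}$ descends to an ample line bundle on the projective scheme $\bar T$, while the same uniform power of $\cL_{k}$ descends to a line bundle $\bar\cL_{k}$ on $K$ (automorphism groups being uniformly bounded over the quasi-compact $\cK_{n,v,I}$), compatibly with $\bar\Phi$; so $\bar\Phi^{*}\bar\cL_{k}$ is ample on $\bar T$, and since $\bar\Phi$ is finite and surjective between proper algebraic spaces, ampleness descends and $\bar\cL_{k}$ is ample on $K$. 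Thus $K$ is projective. I expect the main obstacle to be the construction and properties of $\Phi$ --- the content of \S\ref{section morphism} --- namely running the minimal model program for families of semi-log-canonical pairs to pass between stable pairs and $p$-stable models, establishing base-change compatibility of the relevant canonical models, and controlling the finitely many polynomials $p_{j}$; everything downstream (the determinant-line-bundle identity, descent of a power of a Hodge-type bundle to the coarse space, and descent of ampleness along finite surjective morphisms) is formal.
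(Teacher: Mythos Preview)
Your overall architecture matches the paper's: lift stable pairs to $p$-stable pairs, take relative ample models (the content of \S\ref{section morphism}), identify the pulled-back $\cL_k$ with the determinant of a pushforward via the projection formula, and invoke Koll\'ar's ampleness lemma (Theorem~\ref{thm:main:step:proj}) to conclude. One organizational difference: the paper never builds a morphism of stacks $\sF^{\mathrm{sa}}_{n,p,I}\to\cK_{n,v,I}$. It works one irreducible component of $K_{n,v,I}$ at a time, chooses a normal projective \emph{scheme} $B$ mapping finitely to a component of $\sF_{n,p,I}$ whose generic point lifts the generic stable pair, and applies Theorem~\ref{thm_morphism_our_stack_to_kollars_one_for_reduced_bases} and Theorem~\ref{thm:main:step:proj} over $B$. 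This sidesteps having to check that $\sF^{\mathrm{sa}}$ is open-and-closed and that the ample-model construction glues to a morphism of stacks. (Relatedly, be careful citing Theorem~\ref{thm intro 1 on steroids}: in the paper the projectivity of $\sF_{n,p,I}$, Corollary~\ref{cor_projectivity_our_moduli}, is \emph{deduced from} Corollary~\ref{cor proj moduli}, so you must invoke Theorem~\ref{thm:main:step:proj} directly, as you indicate.)

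The genuine gap is your quasi-finiteness argument. You write that the $p$-stable pair is ``recovered from $(Y,D_Y)$ as $\Proj_Y\bigoplus_{m\ge0}(c_*\cO_X)(m(K_X+(1-\epsilon)D))$'', but since $c_*\cO_X=\cO_Y$ this algebra is $\bigoplus_m c_*\cO_X(m(K_X+(1-\epsilon)D))$, a sheaf on $Y$ whose definition already requires $X$; the formula is circular. The alternative ``uniqueness of log canonical models'' does not apply as stated, because $K_Y$ is typically not $\qq$-Cartier, so there is no pair $(Y,(1-\epsilon)D_Y)$ in the usual sense whose log canonical model you could take. What is true is that $-D$ (equivalently $K_X$) is $\pi$-ample, so $X$ is a relative anti-$D$ model over $Y$; turning this into a uniqueness statement intrinsic to $(Y,D_Y)$ requires real work. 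The paper does \emph{not} prove injectivity: it proves finiteness of the fibers via Lemma~\ref{lemma:finitedness:paper:zsolt}, whose proof (normalize, compare extracted divisors through a common log resolution, and show all fibers are isomorphic in codimension~2) is nontrivial and cites \cite{ABIP}. You should either supply an independent argument that the $p$-stable lift over a fixed $(Y,D_Y)$ is unique, or replace your injectivity claim by this finiteness lemma.
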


As observed in \cite{KP17}*{\S~1.1}, the approach of \cite{Has03} for proving projectivity of these moduli spaces cannot be adapted to higher dimensions, as certain sheaves are no longer functorial with respect to base change.
For this reason, Kov\'acs and Patakfalvi develop a refinement of Koll\'ar's ampleness lemma.
On the other hand, in the setup of $\bQ$-stable pairs, all the needed sheaves remain functorial with respect to base change.
Indeed, flatness and the $S_1$ condition guarantee that all notions of pull-back agree.
Thus, we can follow Hassett's strategy and directly apply \cite{Kol90}.
In this way, the projectivity part of Theorem \ref{thm intro 1 on steroids} is established;
then, to deduce Theorem \ref{thm intro 2}, it suffices to show that the moduli space of $\bQ$-stable pairs naturally admits a \emph{finite} morphism to the moduli space of stable pairs, and the needed polarization descends with this morphism.

\subsection{Structure of the paper}
\label{struttura}
The first part of this work is devoted to developing the notion of $\bQ$-stable pair and extending several statements from pairs to $\bQ$-stable pairs.
In particular, in \S~\ref{preliminaries} we set the key definitions and properties, while in \S~\ref{section_boundedness} we extend the boundedness results of \cite{HMX18} to the context of $\bQ$-stable pairs.

In \S~\ref{section functor} and \S~\ref{section_properness}, we analyze the moduli functor $\sF_{n,p,I}$.
In particular, in \S~\ref{section functor} we show that $\sF_{n,p,I}$, which a priori is only a category fibered in groupoids over $\mathrm{Sch}/k$, is a Deligne--Mumford stack.
Then, in \S~\ref{section_properness} we show that $\sF_{n,p,I}$ is proper.
Thus, \S~\ref{section functor} and \S~ \ref{section_properness} settle Theorem \ref{thm intro 1 on steroids}, except for the projectivity part.

In \S~\ref{section morphism}, we analyze the notion of family of $\bQ$-stable pairs when the base is reduced.
Under these assumptions, we show that the existence of a relative good minimal model is determined by a fiberwise condition.

In \S~\ref{section projectivity ksba} we conclude our work.
We use the ampleness lemma to show that the coarse space of $\sF_{n,p,I}$ is projective, and we use the results from \S~\ref{section morphism} to descend the analogous statement to the moduli space of stable pairs.

\subsection{Connections with other works}
One of the main difficulties to extend the moduli theory from stable varieties to stable pairs lays in the behavior of the boundary divisor in families.
Indeed, given a pair $(X,D)$, $K_X$ and $D$ need not be $\mathbb Q$-Cartier.
In this case, we cannot expect a deformation of $(X,D)$ to induce a flat deformation of $D$, so one cannot define the moduli problem by simply requiring that the divisor varies flatly in families.
A final and satisfactory notion for defining families of divisors has been achieved in \cite{kol19s} with the notion of \textit{K-flatness}.

Over the years, there have been attempts to overcome these difficulties in some special situations where the deformations of $(X,D)$ do induce flat deformations of $D$.
In \cites{Ale06,Ale15}, Alexeev considered pairs $(X,\Delta=\sum_{i=1}^l a_i D_i)$, where each $D_i$ is prime and the coefficients $a_i$ are irrational and $\mathbb Q$-linearly independent.
This assumption can be thought as a ``general choice'' of the coefficients of $D$, as opposed to having rational coefficients.
Indeed, many of the examples, it can be showed that, as we vary the coefficients of the boundary, the behavior of the moduli space is locally constant, leading to a chamber decomposition of some appropriate polytope contained in $[0,1]^l$.
These chambers have rational vertices, determined by the conditions of $(X,D)$ being semi-log canonical and of $K_X+D$ being ample.
For more details on this topic, we refer to \cite{ABIP}, and some examples of this phenomena are illustrated in \cites{Ale15, inc20, AB21}.

In a different direction, Koll\'ar showed in \cite{Kol14} that, if the coefficients of $D$ are strictly greater than $\frac{1}{2}$, deformations of $(X,D)$ lead to flat deformations of $D$.
Thus, even though $D$ is not necessarily $\mathbb Q$-Cartier, this allows for a satisfactory moduli theory that does not rely on the latest developments in \cite{kol19s}.

In this paper, we work with rational coefficients and impose that the boundary $D$ is $\mathbb Q$-Cartier.
As we do not require the coefficients to be irrational, in order to retain the properness of the moduli problem, we trade the ampleness of $K_X+D$ for the flatness of the deformations of $D$.
Thus, while the deformations of the boundary are easier to understand than the general framework of \cite{kol19s}, $K_X+D$ is in general only big and semi-ample.

In \cite{kol_new}*{\S~8.3}, Koll\'ar presents a more general version of the approach pursued in this work.
In particular, he considers arbitrary coefficients of $D$ and allows for independent perturbations of different components of $D$.
In this way, he reconciles Alexeev's work with our work, thus showing that the case of ``$D$ with rational coefficients and $K_X+D$ big and semi-ample'' we consider can be thought of as a limit case of the ``general coefficients with $K_X+D$ ample'' considered by Alexeev.

Similarly, we remark that many of the ideas in the current paper fit in the general formalism of stable minimal models, developed by Birkar (see for example \cite{birkar2021boundedness}), culminating in \cite{birkar2022moduli}, where he develops a moduli theory of varieties and pairs of non-negative Kodaira dimension.
We refer the reader to Definition~\ref{def_p-pair} and \cite{birkar2021boundedness}*{Definition 1.8} for the similarities between the definitions of $\bQ$-stable pair and stable minimal models.
In particular, we observe that both definitions entail a stability condition prescribed by a polynomial, which guarantees the boundedness of the moduli problem, thus taking inspiration from ideas of Viehweg \cite{Vie95}.

\subsection*{Acknowledgements} 
We thank J\'anos Koll\'ar for pointing to us a mistake in an earlier version of this work, for sharing the latest draft of his book on moduli, for providing helpful comments to improve our work, and for providing feedback on an eralier version of this work.
We thank Jarod Alper, Dori Bejleri, Christopher Hacon, S\'andor Kov\'acs, Joaqu\'in Moraga, and Zsolt Patakfalvi for helpful discussions.
We thank the anonymous referees for helpful suggestions.

\section{Preliminaries} \label{preliminaries}

\subsection{Terminology and conventions}
\label{term.subs}
Throughout this paper, we will work over the field of complex numbers.
For the standard notions in the Minimal Model Program (MMP) that are not addressed explicitly, we direct the reader to the terminology and the conventions of~\cite{KM98}.
Similarly, for the relevant notions regarding non-normal varieties, we direct the reader to \cite{Kol13}.
A variety will be an integral separated scheme of finite type over $\bC$.
A birational morphism between non-normal schemes will be a morphism $f \colon X \to Y$ with a dense open subset $U\subseteq Y$ such that $f^{-1}(U)$ is dense and $f^{-1}(U)\to U$ is an isomorphism.

\subsection{Contractions}
A \emph{contraction} is a projective morphism $f\colon X \rar Z$ of quasi-projective varieties with $f_\ast  \O X. = \O Z.$. 
If $X$ is normal, then so is $Z$.

\subsection{Divisors}
Let $\mathbb{K}$ denote $\zz$, $\qq$, or $\rr$. We say that $D$ is a \emph{$\mathbb{K}$-divisor} on a variety $X$ if we can write $D = \sum \subs i=1. ^n d_i P_i$ where $d_i \in \mathbb{K} \setminus \{ 0\}$, $n \in \nn$ and the $P_i$ are prime Weil divisors on $X$ for all $i=1, \ldots, n$. 
We say that $D$ is $\mathbb{K}$-Cartier if it can be written as a $\mathbb{K}$-linear combination of $\zz$-divisors that are Cartier.
The \textit{support} of a $\mathbb{K}$-divisor $D=\sum_{i=1}^n d_iP_i$ is the union of the prime divisors appearing in the formal sum $\mathrm{Supp}(D)= \sum_{i=1}^n P_i$.

In all of the above, if $\mathbb{K}= \zz$, we will systematically drop it from the notation.

Given a prime divisor $P$ in the support of $D$, we will denote by $\mu_P (D)$ the coefficient of $P$ in $D$.
Given a divisor $D = \sum \mu_{P_i}(D) P_i$ on a normal variety $X$, and a morphism $\pi \colon X \to Z$, we define
\[
D^v \coloneqq \sum_{\pi(P_i) \subsetneqq Z} \mu_{P_i}(D) P_i, \
D^h \coloneqq \sum_{\pi(P_i) = Z} \mu_{P_i}(D) P_i.
\]

Let $D_1$ and $D_2$ be divisors on $X$.
We write $D_1 \sim_{\mathbb K ,Z} D_2$ if there is a $\mathbb{K}$-Cartier divisor $L$ on $Z$ such that $D_1 - D_2 \sim _{\mathbb K}f^\ast L$.
Equivalently, we may also write $D_1 \sim_{\mathbb{K}} D_2/Z$, or $D_1 \sim_{\mathbb{K}} D_2$ over $Z$.
If $\mathbb{K}=\zz$, we omit it from the notation.
Similarly, if $Z= \mathrm{Spec}(k)$, where $k$ is the ground field, we omit $Z$ from the notation.

Let $\pi \colon X \rar Z$ be a projective morphism of normal varieties.
Let $D_1$ and $D_2$ be two $\mathbb{K}$-divisors on $X$.
We say that $D_1$ and $D_2$ are numerically equivalent over $Z$, and write $D_1 \equiv D_2/Z$, if $D_1 . C = D_2 . C$ for every curve $C \subset X$ such that $\pi(C)$ is a point.
In case the setup is clear, we just write $D_1 \equiv D_2$, omitting the notation $/Z$.

\subsection{Non-normal varieties and pairs} There are two important generalizations of the notion of normal variety.
\begin{Def}
An $S_2$ scheme is called \emph{demi-normal} if its codimension 1 points are either regular or nodal.\end{Def}
Roughly speaking, the notion of demi-normal schemes allows extending the notion of log canonical singularities to non-normal varieties, allowing for a generalization of the notion of stable curve to higher dimensions.
We refer to \cite{kol_new}*{\S~10.8} for more details.

\begin{Def}A finite morphism of schemes $X' \rar X$ is called a \emph{partial seminormalization} if $X'$ is reduced and, for every point $x \in X$,  the induced map $k(x) \rar k(\mathrm{red}(g^{-1}(x)))$ is an isomorphism.
There exists a unique maximal partial seminormalization, which is called \emph{the seminormalization} of $X$.
A scheme is called \emph{seminormal} if the seminormalization is an isomorphism.\end{Def}

This is an auxiliary lemma, it is probably well known. We include it for completeness.
We refer to \cite{kol_new}*{\S~10.8} for the details about seminormality.

\begin{Lemma}\label{lemma:connected:fivers:vs:cohomol:connected:fibers}
Let $p \colon X\to Y$ be a proper surjective morphism with connected fibers, with $Y$ seminormal and $X$ reduced.
Then $p_*\cO_X = \cO_Y$.
\end{Lemma}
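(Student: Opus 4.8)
The plan is to work locally on $Y$ and exploit the universal property of seminormality. First I would reduce to the affine case, so write $Y = \spec A$ with $A$ seminormal, and set $B \coloneqq H^0(X, \cO_X)$. Since $p$ is proper, $B$ is a finite $A$-algebra (finiteness of higher direct images / Stein factorization), and the natural map $A \to B$ is the ring map underlying $p_\ast \cO_X = \widetilde B$; our goal is to show $A \to B$ is an isomorphism. Because $X$ is reduced, $B$ is reduced. Moreover, since $p$ is surjective with connected fibers, $A \to B$ is injective (a regular function on $X$ killing no component of $Y$ is determined by its values, and connectedness of fibers prevents $\spec B \to \spec A$ from disconnecting any fiber). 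So I reduce to: $A \hookrightarrow B$ is a finite injective extension of reduced rings, $A$ seminormal, and $\spec B \to \spec A$ has connected fibers; conclude $A = B$.

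Next I would factor $A \to B$ through the seminormalization: Stein factorization of $p$ gives $X \xrightarrow{p'} \spec B \xrightarrow{g} Y$ with $p'$ having connected fibers and $p'_\ast \cO_X = \cO_{\spec B}$, so it suffices to treat the finite morphism $g \colon \spec B \to Y$. Since $g$ is finite with connected fibers (each fiber of $g$ is the continuous image of a connected fiber of $p$, or empty — but $p$ surjective forces nonempty), $g$ is a universal homeomorphism onto $Y$: it is finite, surjective, and fibers are single points set-theoretically because a finite morphism has finite discrete fibers, and "finite + connected fibers" forces each fiber to be a single point. On residue fields, $g$ induces purely inseparable extensions; but we work over $\bC$ (or at least in characteristic $0$ throughout the paper), so these residue field extensions are trivial, i.e.\ $k(y) \xrightarrow{\sim} k(x)$ for the unique point $x$ over each $y$. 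Thus $g$ is a finite, bijective morphism inducing isomorphisms on residue fields with $\spec B$ reduced — by the universal property of seminormalization (\cite{Kol13}*{\S 10.2}), $g$ factors the seminormalization map, and since $Y$ is already seminormal, $g$ is an isomorphism. Equivalently: $A \to B$ is a subintegral (finite, bijective, trivial residue extensions) extension of reduced rings, so $B$ lies in the seminormalization of $A$ inside its total ring of fractions, which equals $A$.

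The main obstacle I expect is the injectivity of $A \to B$ and, relatedly, making precise that the fibers of the Stein factorization map $g$ are singletons — one must be careful that $X$ reduced does not imply $Y$ reduced a priori, but $A \hookrightarrow B$ reduced does give $A$ reduced, and connectedness of the fibers of $p$ is exactly what rules out $g$ having a disconnected (hence, for a finite map, at least two-point) fiber. Once these set-theoretic and reducedness points are pinned down, the conclusion is a formal consequence of the characterization of seminormal schemes as those admitting no nontrivial subintegral extensions by reduced schemes; no hard geometry is needed beyond properness (for finiteness of $p_\ast \cO_X$) and the characteristic-zero hypothesis (to kill inseparability in the residue field extensions).
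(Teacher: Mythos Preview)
Your proposal is correct and essentially identical to the paper's proof: both take the Stein factorization $X \to Z \to Y$, note that $Z$ is reduced and $Z \to Y$ is a finite bijection, and then conclude via seminormality of $Y$. The paper detours slightly by passing to the seminormalization $Z^{sn}$ of $Z$ (arguing $Z^{sn}\to Y$ is an isomorphism of seminormal schemes, then that $Z\to Z^{sn}$ is an isomorphism since $Z$ is reduced) rather than showing directly that $Z\to Y$ is a partial seminormalization; one minor slip in your writeup is that single-point fibers for a finite map do not by themselves force the residue-field extensions to be purely inseparable (e.g.\ $\spec\mathbb{C}\to\spec\mathbb{R}$), but your conclusion is nonetheless correct because you immediately invoke that everything is of finite type over $\mathbb{C}$.
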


\begin{Remark}\label{remark:cohom:connected:implies:one:can:take:global:sections:after:pull:back}
Observe that, in situation of Lemma \ref{lemma:connected:fivers:vs:cohomol:connected:fibers}, by the projection formula it follows that, for any line bundle $L$ on $Y$, we have $H^0(Y,L) =H^0(X,p^*L)$.
\end{Remark}

\begin{proof}
We can take the Stein factorization $X\xrightarrow{q} Z\xrightarrow{a} Y$.
Since $X$ is reduced, then $Z$ is reduced.
Observe that $q_*\cO_X = \cO_Z$, so the desired result follows if we can show that $a$ is an isomorphism.

Since the composition $p=a \circ q$ has connected fibers and $q \colon X\to Z$ is surjective, then $a \colon Z\to Y$ has connected fibers.
Since $a$ is finite, it is injective.
It is also surjective since $p$ is surjective, so $a$ is a bijection.
Then, we can take the seminormalization $Z^{sn}\to Z$ and consider the composition $Z^{sn}\to Z \to Y$.
This is a bijective morphism since it is a composition of bijections, and it is proper since $a$ is proper and $Z^{sn}\to Z$ is proper.
Then, the composition $Z^{sn} \to Y$ is an isomorphism, since the source and the target are seminormal.

Now, on the topological space given by $Z$ we have the following morphsims of sheaves:
$$\cO_Y\xrightarrow{a^\#} \cO_Z\xrightarrow{b} \cO_{Z^{sn}}.$$
The composition is surjective, so $b$ is surjective. Moreover, since $Z$ is reduced, the map $\cO_Z\xrightarrow{b} \cO_{Z^{sn}}$ is also injective. Then it is an isomorphism, so $Z\cong Z^{sn}\cong Y$.
\end{proof}

\subsection{Divisorial sheaves}\label{divisorial sheaves}
Throughout this section, $X$ will be $S_2$ and reduced. We begin this subsection with the following definition:

\begin{Def}Let $X$ be a scheme.
A sheaf $\mathfrak F$ on $X$ is called \emph{divisorial sheaf} if it is $S_2$ and there is a closed subscheme $Z \subset X$ of codimension at least 2 such that $\mathfrak F | \subs X \setminus Z.$ is locally free of rank 1.\end{Def}

\begin{Def}[\cite{HK04}*{\S~3}]Let $f \colon X\to S $ a flat morphism of schemes, and $\cE$ a coherent sheaf on $X$. We say that $\cE$ is relatively $S_n$ if it is flat over $S$ and its restriction to each fiber is $S_n.$
\end{Def}

Now, let $X$ be a demi-normal scheme, and let $\mathrm{Weil}^*(X)$ denote the subgroup of $\mathrm{Weil}(X)$ generated by the prime divisors that are not contained in the conductor of $X$.
Then, there is an identification between $\mathrm{Weil}^*(X)/\sim$ and the group of divisorial sheaves, where $\sim$ denotes linear equivalence.
The identification is defined as follows.
By the definition of $\mathrm{Weil}^*(X)$ and the demi-normality of $X$, for every element $B \in \mathrm{Weil}^*(X)$, there is a closed subset $Z \subset X$ of codimension at least 2 such that $B| \subs X \setminus Z.$ is a Cartier divisor.
Then, the corresponding divisorial sheaf is defined as $j_* \O X \setminus Z. (B| \subs X \setminus Z.)$, where we have $j \colon X \setminus Z \rar X$.

Consider a flat family $f \colon X\to B$ with $S_2$ fibers, and assume that there is an open subset $i \colon U\subseteq X$ such that $U_b$ is big for every $b\in B$.
Then, for every locally free sheaf $\cG$ on $U$, we can consider $i_*\cG$ on $X$.
From \cite{HK04}*{Corollary 3.7}, this is a reflexive sheaf on $X$ (a priori, it is not $S_2$ relatively to $B$, see \S~\ref{hassett example}).

If $X$ is demi-normal, its canonical sheaf is a divisorial sheaf, as $X$ is Gorenstein in codimension 1.
By the above identification, we can then choose a canonical divisor $K_X$ such that $\O X. (K_X) \cong \omega_X$.
Observe that this construction can be carried out in families.
Indeed, if $f \colon X\to B$ is a flat morphism with demi-normal fibers of dimension $n$, there is an open locus $i \colon U\subseteq X$ that has codimension 2 along each fiber, on which $f$ is Gorenstein.
Then we can define $\omega_{X/B} \coloneqq i_*\omega_{U/B}$.
Observe that this agrees with the $(-n)$-th cohomology of the relative dualizing complex.
Indeed, the latter is $S_2$ (see \cite{LN18}*{\S~5}).

Let $X$ be demi-normal, and consider two divisorial sheaves $L_1$ and $L_2$.
Then, their \emph{reflexive tensor product} is defined as $L_1 \hat{\otimes} L_2 \coloneqq (L_1 \otimes L_2)^{**}$ and it is a divisorial sheaf itself.
If we have $L_1 \cong \O X. (D_1)$ and $L_2 \cong \O X. (D_2)$, we have $L_1 \hat{\otimes} L_2 \cong \O X. (D_1+D_2)$.
The \emph{$m$-fold reflexive power $L^{[m]}$} is defined as the $m$-fold self reflexive tensor product.

For more details, we refer to \cite{kol_new}*{\S~3.3}, \cite{Kol13}*{5.6} and \cite{HK04}.

\subsection{Boundedness}
Let $\mathfrak{D}$ be a set of projective pairs.
Then, we say that $\mathfrak{D}$ is {\it log bounded} (resp. {\it log birationally bounded}) if there exist a variety $\mathcal{X}$, a reduced divisor $\mathcal{B}$ on $\mathcal{X}$, and a projective morphism $\pi \colon \mathcal{X} \rar T$, where $T$ is of finite type, such that $\mathcal{B}$ does not contain any fiber of $\pi$, and, for every $(X,B) \in \mathfrak{D}$, there are a closed point $t \in T$ and a morphism (resp. a birational map) $f_t \colon \mathcal{X}_t \rar X$ inducing an isomorphism $(X,\Supp(B)) \cong (\mathcal{X}_t,\mathcal{B}_t)$ (resp. such that $\Supp(\mathcal B _t)$ contains the strict transform of $\Supp(B)$ and all the $f_t$ exceptional divisors).

A set of projective pairs $\mathfrak{D}$ is said to be {\it strongly log bounded} if there is a quasi-projective pair $(\mathcal{X},\mathcal{B})$ and a projective morphism $\pi \colon \mathcal{X} \rar T$, where $T$ is of finite type, such that $\Supp(\mathcal{B})$ does not contain any fiber of $\pi$, and for every $(X,B) \in \mathfrak D$, there is a closed point $t \in T$ and an isomorphism $f \colon X \rar \mathcal{X}_t$ such that $f_*B=\mathcal{B}_t$.

A set of projective pairs $\mathfrak{D}$ is {\it effectively log bounded} if it is strongly log bounded and we may choose a bounding pair $\pi \colon (\mathcal{X},\mathcal{B}) \rar T$ such that, for every closed point $t \in T$, we have $(\mathcal{X}_t,\mathcal{B}_t) \in \mathfrak D$.

\subsection{Index of a set}\label{indice}
Given a finite subset $I \subseteq \mathbb{Q}^n$, we define the \emph{index} of $I$ to be the smallest positive rational number $r$ such that $rI\subseteq \mathbb{Z}^n$.

\subsection{Stable pairs and $\bQ$-stable pairs} \label{p-pairs section}
Let $(X,D)$ denote a projective semi-log canonical pair, where $D$ has coefficients in $\qq$.
We say that $(X,D)$ is a \emph{stable pair} if $K_X + D$ is ample.
\begin{Def}\label{def_p-pair}
Consider a polynomial $p(t)\in \mathbb{Q}[t]$ and a set $I\subseteq (0,1]$.
A \emph{$\bQ$-pair} with polynomial $p(t)$ and coefficients in $I$ is the datum of a semi-log canonical pair $(X,\Delta)$ and a $\qq$-Cartier $\qq$-divisor $D$ on $X$ satisfying the following properties:
\begin{enumerate}
    \item $(X,D+\Delta)$ is semi-log canonical;
    \item $p(t) = (K_X+tD+\Delta)^{\dim(X)}$; 
    \item there is a stable pair $(X^c,\Delta^c+D^c)$ with a birational contraction $\pi \colon X\to X^c$ such that $\pi^*(K_{X^c}+\Delta^c + D^c) = K_{X}+\Delta + D$, $\pi_*D=D^c$ and $\pi_*\Delta=\Delta^c$; and
    \item the coefficients of $D$ and $\Delta$ are in $I$.
\end{enumerate}
If moreover there is $\epsilon_0>0$ such that for every $0<\epsilon \le\epsilon_0$ the pair $(X,(1-\epsilon)D+\Delta)$ is stable, the $\bQ$-pair $(X,\Delta;D)$ is called a \emph{$\bQ$-stable pair}. Finally, we will call $(X^c,\Delta^c+D^c)$ the \emph{canonical model} of $(X,\Delta; D)$.\end{Def}

\begin{Remark}
If $(X,\Delta;D)$ is a pair that satisfies points (1), (2) and (4) of Definition \ref{def_p-pair} and such that there is $\epsilon_0>0$ such that for every $0<\epsilon \le\epsilon_0$ the pair $(X,(1-\epsilon)D+\Delta)$ is stable, then one can prove that point (3) of Definition \ref{def_p-pair} is equivalent to $K_X+D+\Delta$ being basepoint free.
\end{Remark}

For brevity, we denote the datum of a $\bQ$-pair by $(X,\Delta;D)$, where the polynomial $p(t)$ and the set of coefficients $I$ are omitted in the notation.
In case $\Delta=0$, we then write $(X;D)$.
\begin{Remark}
For example, if $(X,0;D)$ is a semi-log canonical $\bQ$-stable pair, and $\nu \colon X^n\to X$ is the normalization of $X$ with conductor $\Delta$, then $(X^n,\Delta;\nu_*^{-1}D)$ is a $\bQ$-stable pair.
\end{Remark}

\begin{Notation}\label{notation_Dsc}
If $I$ is a finite set and $r$ is its index, given a $\bQ$-pair $(X,\Delta;D)$ with polynomial $p(t)$ and coefficients in $I$ we denote by $D^{sc}$ as the subscheme of $X$ defined by the reflexive sheaf of ideals $\O X.(-rD)$.
\end{Notation}

\begin{Remark}\label{remark_KX+D+delta_is_nef}
Observe that if $(X,\Delta;D)$ is a $\bQ$-stable pair, the $\qq$-divisor $K_X+D+\Delta$ is nef since it is limit of ample $\qq$-divisors.
Furthermore, $K_X+D+\Delta$ is big, as it is the sum of an ample $\qq$-divisor and an effective $\qq$-divisor.
\end{Remark}

\begin{Lemma} \label{lemma_convex_combination}
Let $X$ be a projective variety, and let $D_1$ and $D_2$ be two nef $\bQ$-Cartier $\bQ$-divisors.
Assume that for some $t \in (0,1)$, the divisor $tD_1 + (1-t)D_2$ is ample.
Then, $cD_1 + (1-c)D_2$ is ample for every $c \in (0,1)$.
\end{Lemma}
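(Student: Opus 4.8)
The plan is to use Kleiman's criterion together with the cone of ample divisors being open and convex. First I would recall that for a projective variety $X$, the nef cone $\mathrm{Nef}(X) \subseteq N^1(X)_\rr$ is a closed convex cone whose interior is exactly the ample cone $\mathrm{Amp}(X)$ (by Kleiman's criterion). So the hypotheses say that $D_1, D_2 \in \mathrm{Nef}(X)$ and that the point $tD_1 + (1-t)D_2$ lies in the interior $\mathrm{Amp}(X)$ for some fixed $t \in (0,1)$. The goal is to show the whole open segment $\{cD_1 + (1-c)D_2 : c \in (0,1)\}$ lies in $\mathrm{Amp}(X)$.

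The key step is a convexity argument in $N^1(X)_\rr$. Fix $c \in (0,1)$; I want to show $cD_1 + (1-c)D_2$ is ample. Since $\mathrm{Amp}(X)$ is open, it suffices to exhibit $cD_1 + (1-c)D_2$ as a convex combination, with positive weights, of a point in $\mathrm{Amp}(X)$ and a point in $\mathrm{Nef}(X)$ — indeed, if $p$ is ample and $q$ is nef, then for any $\lambda \in (0,1]$ the class $\lambda p + (1-\lambda)q$ is ample, because it equals $\lambda p + ((1-\lambda)q)$ where the first summand is in the open cone $\mathrm{Amp}(X)$ scaled by $\lambda > 0$ and the second is in the closed cone $\mathrm{Nef}(X)$, and $\mathrm{Amp}(X) + \mathrm{Nef}(X) \subseteq \mathrm{Amp}(X)$ (openness of $\mathrm{Amp}(X)$ plus the fact that adding a nef class preserves positivity of intersection numbers: $(\lambda p + (1-\lambda)q)\cdot C = \lambda(p\cdot C) + (1-\lambda)(q \cdot C) > 0$ for every curve $C$, using $p \cdot C > 0$ and $q \cdot C \geq 0$). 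So concretely: write $A \coloneqq tD_1 + (1-t)D_2$, which is ample. If $c = t$ we are done. If $c > t$, then $c D_1 + (1-c)D_2 = \frac{c-t}{1-t}\,D_1 + \frac{1-c}{1-t}\,A$; here the coefficient of $A$ is $\frac{1-c}{1-t} > 0$ (since $c < 1$) and the coefficient of $D_1$ is $\frac{c-t}{1-t} \geq 0$, and the two coefficients sum to $1$, so this expresses $cD_1 + (1-c)D_2$ as a positive-weighted combination of the ample class $A$ with the nef class $D_1$, hence it is ample. Symmetrically, if $c < t$, write $cD_1 + (1-c)D_2 = \frac{t-c}{t}\,D_2 + \frac{c}{t}\,A$, with $\frac{c}{t} > 0$ and $\frac{t-c}{t} \geq 0$ summing to $1$, again a positive-weighted combination of $A$ (ample) with $D_2$ (nef), hence ample.

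The main obstacle — if there is one — is purely bookkeeping: verifying that the displayed coefficients are nonnegative, sum to $1$, and that the weight on the ample class $A$ is strictly positive in each of the two cases $c > t$ and $c < t$; all of this follows immediately from $c, t \in (0,1)$. The only conceptual input is the standard fact that the sum of an ample $\rr$-divisor class and a nef $\rr$-divisor class is ample, which is immediate from Kleiman's criterion as indicated above. No deeper input (and in particular nothing from the MMP machinery of the surrounding sections) is needed. I would present the two cases $c \geq t$ and $c < t$ in a short display and conclude.
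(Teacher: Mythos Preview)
Your proof is correct. Both your argument and the paper's rest on Kleiman's criterion, but the structure differs. You argue directly: express $cD_1+(1-c)D_2$ as a convex combination of the ample class $A=tD_1+(1-t)D_2$ and one of the nef endpoints $D_1$ or $D_2$ (according to the sign of $c-t$), then invoke the standard fact that a positive multiple of an ample class plus a nef class is ample. The paper instead argues by contradiction: if some $c_0D_1+(1-c_0)D_2$ were nef but not ample, Kleiman's criterion produces a nonzero class $\gamma\in\overline{NE}(X)$ with $(c_0D_1+(1-c_0)D_2)\cdot\gamma=0$; nefness of $D_1,D_2$ then forces $D_1\cdot\gamma=D_2\cdot\gamma=0$, contradicting ampleness of $tD_1+(1-t)D_2$. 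Your approach is more constructive and avoids the contradiction setup; the paper's makes the role of $\overline{NE}(X)$ explicit. One minor remark: your parenthetical justification of ``ample plus nef is ample'' tests only against curves $C$, whereas Kleiman requires positivity on all of $\overline{NE}(X)\setminus\{0\}$; the same inequality goes through verbatim for any such class, so this is purely cosmetic.
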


\begin{proof}
Since ampleness is an open condition, we may assume that $t \in \mathbb Q$.
Then, using convex combinations, the claim follows from Kleiman's criterion and the denseness of $ \mathbb{Q}$ in $\mathbb{R}$.
\end{proof}

\begin{Lemma} \label{ample model p-pair}
Let $(X,\Delta;D)$ be a $\bQ$-stable pair, and assume that $f \colon X \rar Y$ is its canonical model.
Then, $\mathrm{Ex}(f) \subset \supp(D)$.
\end{Lemma}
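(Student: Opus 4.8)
Since $K_X+\Delta+D$ is nef and big by Remark~\ref{remark_KX+D+delta_is_nef}, the ample model $f\colon X\to Y$ is birational, and by the construction of $Y=\Proj\cA$ a sufficiently divisible multiple of $K_X+\Delta+D$ is the pullback of a very ample line bundle from $Y$; hence $K_X+\Delta+D\sim_{\qq}f^\ast A$ for some ample $\qq$-Cartier $\qq$-divisor $A$ on $Y$. Consequently, for every integral curve $C\subseteq X$ contracted by $f$ (i.e.\ with $f(C)$ a point) we have $(K_X+\Delta+D)\cdot C=A\cdot f_\ast C=0$. The plan is therefore to show that every such curve $C$ is contained in $\supp(D)$, which suffices because $\mathrm{Ex}(f)$ is the union of the positive-dimensional fibers of $f$ and hence is swept out by curves $C$ contracted by $f$.

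The heart of the argument is a short intersection-number computation. Suppose, for contradiction, that $C$ is contracted by $f$ but $C\not\subseteq\supp(D)$. Since $D$ is effective and $\qq$-Cartier (Definition~\ref{def_p-pair}), choose $m\in\nn$ with $mD$ an effective Cartier divisor. The tautological inclusion $\O X.\hookrightarrow\O X.(mD)$, whose zero divisor is $mD$, restricts to a section of the line bundle $\O X.(mD)|_C$ on the integral curve $C$ which is nonzero precisely because $C\not\subseteq\supp(mD)=\supp(D)$. Hence $m\,(D\cdot C)=\deg_C\!\bigl(\O X.(mD)|_C\bigr)\ge 0$, so $D\cdot C\ge 0$. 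On the other hand, by the definition of $p$-stable pair there is $\epsilon>0$ with $K_X+(1-\epsilon)D+\Delta$ ample, so $(K_X+(1-\epsilon)D+\Delta)\cdot C>0$; but
\[
(K_X+(1-\epsilon)D+\Delta)\cdot C=(K_X+\Delta+D)\cdot C-\epsilon\,(D\cdot C)=-\epsilon\,(D\cdot C)\le 0,
\]
a contradiction. Therefore $C\subseteq\supp(D)$, and letting $C$ vary over the contracted curves gives $\mathrm{Ex}(f)\subseteq\supp(D)$.

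The one point that requires care is the identification of $\mathrm{Ex}(f)$ with the locus covered by $f$-contracted curves: for a projective birational morphism of \emph{normal} varieties this is standard (a point with zero-dimensional fiber is a point where $f$ is quasi-finite, hence an isomorphism by Zariski's main theorem, as $f$ is birational onto a normal target), but here $X$, and a priori $Y$, are only demi-normal. I expect this to be the only real obstacle, and I would handle it by reducing to the normal case—e.g.\ passing to normalizations—noting that none of the intersection numbers above are affected since $D$, $K_X+\Delta$, and $K_X+\Delta+D$ are all $\qq$-Cartier on $X$.
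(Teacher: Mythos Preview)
Your argument is correct and is essentially the same as the paper's: both prove that any curve $C\not\subset\supp(D)$ satisfies $(K_X+\Delta+D)\cdot C>0$ by writing $K_X+\Delta+D=(K_X+\Delta+(1-\epsilon)D)+\epsilon D$ and using ampleness of the first summand together with $D\cdot C\geq 0$. The paper's proof does not address the demi-normal subtlety you raise in your last paragraph; it simply asserts that the conclusion follows once the contracted curves are shown to lie in $\supp(D)$, so your discussion there already goes beyond what the paper provides.
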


\begin{proof}
By assumption, there exists $\epsilon>0$ such that $(X,(1-\epsilon)D + \Delta)$ is a stable pair.
Thus, if $C$ is an irreducible curve that is not contained in $\supp(D)$, we have that
\[
(\K X. + D + \Delta).C = (\K X. + (1-\epsilon)D + \Delta).C + \epsilon D .C >  0,
\]
since the first summand is positive by ampleness and the second is non-negative as $C$ is not contained in $\supp(D)$.
Thus, the curves contracted by $f$ are contained in $\supp(D)$ and the claim follows.
\end{proof}
The following lemma is the main technical tool that we will use in \S~\ref{section_boundedness} and \S~\ref{section_properness}:
\begin{Lemma}\label{lemma_bound_epsilon_lc_case}
Let $n$ and $M$ be natural numbers.
Then, there exists $\epsilon_0 > 0$, only depending on $n$ and $M$, such that the following holds.
Let $(Y,\Delta_Y;D_Y)$ be a normal $\bQ$-stable pair of dimension $n$, and let $\pi \colon Y \rar X$ be its canonical model $(X,D_X+\Delta_X)$.
Further assume that the Cartier index of $K_X + D_X + \Delta_X$ is less than $M$.
Then, for every $0 < \epsilon < \epsilon_0$, the pair $(Y,(1-\epsilon)D_Y+\Delta_Y)$ is stable.
\end{Lemma}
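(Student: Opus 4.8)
\emph{Proof plan.} The strategy is to notice that the only nontrivial point is the ampleness of $L_\epsilon:=K_Y+(1-\epsilon)D_Y+\Delta_Y$, to rephrase this in terms of the nef threshold of $\pi^{*}(K_X+D_X+\Delta_X)-\epsilon D_Y$, and to bound that threshold from below using the Cone Theorem for log canonical pairs together with the bound on the Cartier index of $K_X+D_X+\Delta_X$.

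First, for every $\epsilon\in[0,1)$ the pair $(Y,(1-\epsilon)D_Y+\Delta_Y)$ is log canonical, since $(1-\epsilon)D_Y+\Delta_Y=(1-\epsilon)(D_Y+\Delta_Y)+\epsilon\Delta_Y$ is a convex combination of the log canonical boundaries $D_Y+\Delta_Y$ and $\Delta_Y$ (cf.\ Definition \ref{def_p-pair}); so it suffices to make $L_\epsilon$ ample. Put $A:=K_X+D_X+\Delta_X$, so that $A$ is ample, $mA$ is Cartier for some $m<M$, $L_0=\pi^{*}A$ is nef, and $L_\epsilon=\pi^{*}A-\epsilon D_Y$. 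By the projection formula $\pi^{*}A\cdot\Gamma=A\cdot\pi_{*}\Gamma\ge 1/m\ge 1/M$ for every curve $\Gamma\subset Y$ with $\pi(\Gamma)$ not a point. Since $(Y,\Delta_Y;D_Y)$ is $p$-stable, $L_{\epsilon_1}$ is ample for some (pair-dependent) $\epsilon_1>0$; as $L_{\epsilon_1}$ is numerically $-\epsilon_1 D_Y$ over $X$ and $\pi$ is projective, $-D_Y$ is $\pi$-ample (this is also the content of Lemma \ref{ample model p-pair}). Set $\epsilon_0:=\tfrac{1}{2nM+1}$ and $\tau:=\sup\{\epsilon\ge 0:L_\epsilon\text{ is nef}\}\ge\epsilon_1>0$. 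Then $L_\epsilon$ is ample for every $\epsilon\in(0,\tau)$: for $\epsilon\le\epsilon_1$ this is $p$-stability, while for $\epsilon_1<\epsilon<\tau$ one picks $\eta\in(\epsilon,\tau)$ and applies Lemma \ref{lemma_convex_combination} to the nef divisors $L_{\epsilon_1}$ (in fact ample) and $L_\eta$, writing $L_\epsilon=cL_{\epsilon_1}+(1-c)L_\eta$ with $c=\tfrac{\eta-\epsilon}{\eta-\epsilon_1}\in(0,1)$. Hence it suffices to prove $\tau\ge\epsilon_0$.

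Assume for contradiction that $\tau<\epsilon_0<1$. Then $L_\tau$ is nef (the nef cone is closed) but not ample (otherwise some $L_\epsilon$ with $\epsilon>\tau$ would still be ample), so by Kleiman's criterion the face $F:=\{\gamma\in\overline{NE}(Y):L_\tau\cdot\gamma=0\}$ is nonzero; as $\overline{NE}(Y)$ contains no line, $F$ has an extremal ray $R$, which is then an extremal ray of $\overline{NE}(Y)$. The crucial point is that $R$ is $(K_Y+\Delta_Y)$-negative: if $0\ne\gamma$ spans $R$ then $\pi^{*}A\cdot\gamma=\tau\,(D_Y\cdot\gamma)$, and if $\pi^{*}A\cdot\gamma=0$ then $\pi_{*}\gamma=0$, so $\gamma\in\overline{NE}(Y/X)$ and $D_Y\cdot\gamma=0$, contradicting $\pi$-ampleness of $-D_Y$; hence $\pi^{*}A\cdot\gamma>0$ and $D_Y\cdot\gamma>0$, and since $K_Y+\Delta_Y$ is numerically $\pi^{*}A-D_Y$ we get $(K_Y+\Delta_Y)\cdot\gamma=(\tau-1)(D_Y\cdot\gamma)<0$. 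By the Cone Theorem for log canonical pairs, $R$ is spanned by a rational curve $C$ with $0<-(K_Y+\Delta_Y)\cdot C\le 2n$. Then $L_\tau\cdot C=0$ gives $\pi^{*}A\cdot C=\tau\,(D_Y\cdot C)$ and $(1-\tau)(D_Y\cdot C)=-(K_Y+\Delta_Y)\cdot C\le 2n$, so $D_Y\cdot C\le\tfrac{2n}{1-\tau}$; moreover $\pi^{*}A\cdot C=\tau\,(D_Y\cdot C)>0$, so $\pi(C)$ is not a point and $\pi^{*}A\cdot C\ge 1/M$. Combining, $\tau=\tfrac{\pi^{*}A\cdot C}{D_Y\cdot C}\ge\tfrac{1-\tau}{2nM}$, i.e.\ $(2nM+1)\tau\ge 1$, i.e.\ $\tau\ge\epsilon_0$, a contradiction. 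Hence $\tau\ge\epsilon_0$, and for every $0<\epsilon<\epsilon_0$ the divisor $L_\epsilon$ is ample, so $(Y,(1-\epsilon)D_Y+\Delta_Y)$ is stable.

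The main obstacle is that nothing obviously bounds the nef threshold $\tau$ from below uniformly, since $D_Y\cdot C$ can be arbitrarily large for a general curve $C$. The point that makes the argument work is that one only needs to control $\tau$ along a curve $C$ that actually computes it, and for such a curve the relation $L_\tau\cdot C=0$ pins $C$ down: it must be simultaneously $(K_Y+\Delta_Y)$-negative (hence of $K_Y+\Delta_Y$-degree at most $2n$, by the Cone Theorem) and not contracted by $\pi$ (hence of $\pi^{*}A$-degree at least $1/M$, by the bound on the Cartier index of $K_X+D_X+\Delta_X$).
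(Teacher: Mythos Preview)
Your argument is correct and follows essentially the same route as the paper: both proofs combine the Cartier-index bound $\pi^{*}A\cdot C\ge 1/M$ for non-contracted curves with the Cone Theorem length bound $-(K_Y+\Delta_Y)\cdot C\le 2n$ on $(K_Y+\Delta_Y)$-negative extremal rays, and then finish with Lemma~\ref{lemma_convex_combination}. The only cosmetic difference is organizational: you package the computation as a lower bound on the nef threshold $\tau$ and argue by contradiction, whereas the paper fixes an explicit $\epsilon_0$ and checks directly that $K_Y+(1-\epsilon_0)D_Y+\Delta_Y$ is positive on every ray of the cone decomposition.
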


\begin{proof}
By definition of $\bQ$-stable pair and of canonical model,
for every $\pi$-exceptional curve $C$, we have $(K_Y+D_Y+\Delta_Y).C=0$ and $(K_Y+(1-\epsilon)D_Y+\Delta_Y).C>0$ for $0 < \epsilon \ll 1$.
For every such curve $C$, the function $t\mapsto (K_Y+tD_Y+\Delta_Y).C$ is linear and not identically 0, so it has at most one zero. 
In particular, for every $\epsilon >0$ we have 
\begin{equation} \label{eq_intersection0}
(K_Y+(1-\epsilon)D_Y+\Delta_Y).C>0.    
\end{equation}
Hence, if we choose $\epsilon =1$, we have
\begin{equation} \label{eq_intersection1}
(K_Y+\Delta_Y).C>0
\end{equation}
for every $\pi$-exceptional curve.

By the same argument, for every curve $C'$ such that $(K_Y + \Delta_Y).C' \geq 0$, it follows that
\begin{equation} \label{eq_mori_cone}
    (K_Y  + (1-\eta) D_Y + \Delta_Y).C' >0
\end{equation}
for every $\eta \in (0,1)$.
Now, define
\[
a\coloneqq \frac{1}{2M},\quad \epsilon_0 \coloneqq \displaystyle{\frac{a}{2n+a+1}}.
\]

Since $K_X+D_X+\Delta_X$ is ample with Cartier index bounded by $M$, for every curve $\Gamma$ that is not $\pi$-exceptional,
by the projection formula, we have
\begin{equation} \label{eq_intersection2}
\frac{1}{2M} = a<(K_X+D_X+\Delta_X).p_*\Gamma = (K_Y+D_Y+\Delta_Y).\Gamma.
\end{equation}
From \cite{Fuj11}*{Theorem 1.4}, for every $(K_Y+\Delta_Y)$-negative extremal ray $R$, we may find a curve $\Gamma$ generating $R$ such that
\begin{equation} \label{eq_bdd_negative_curves}
-2n\le (K_Y+\Delta_Y).\Gamma<0.
\end{equation}
By \eqref{eq_intersection1}, any such curve is not $\pi$-exceptional.
Then, by \eqref{eq_intersection2} and \eqref{eq_bdd_negative_curves}, for any such $\Gamma$, we have
\begin{equation} \label{eq_extremal_curves}
    (K_Y + (1-\epsilon_0)D_Y + \Delta_Y).\Gamma= (1-\epsilon_0)(K_Y + D_Y + \Delta_Y).\Gamma + \epsilon_0 (K_Y + \Delta_Y).\Gamma>(1-\epsilon_0)a-2n\epsilon_0 > 0.
\end{equation}

Now, consider the cone of curves $\overline{NE}(Y)$ and its decomposition given by the cone theorem associated to the pair $(Y,\Delta_Y)$ \cite{Fuj11}*{Theorem 1.4}.
Then, by \eqref{eq_mori_cone}, we have that $K_Y + (1-\epsilon_0)D_Y + \Delta_Y$ is positive on $\overline{NE}(Y)_{K_Y + \Delta_Y \geq 0}$.
Thus, every $(K_Y + (1-\epsilon_0)D_Y + \Delta_Y)$-negative extremal ray is also a $(K_Y +  \Delta_Y)$-negative extremal ray.
Then, by \eqref{eq_extremal_curves}, we have that $(K_Y + (1-\epsilon_0)D_Y + \Delta_Y)$ is positive on $R$.
Thus, by the cone theorem \cite{Fuj11}*{Theorem 1.4}, the log canonical pair $(Y,(1-\epsilon_0)D_Y + \Delta_Y)$ has no negative extremal rays; thus, $K_Y + (1-\epsilon_0)D_Y + \Delta_Y$ is nef.

Now, by the definition of $\bQ$-stable pair, some convex combination of $K_Y+(1-\epsilon_0)D_Y+\Delta_Y$ and $K_Y+D_Y+\Delta_Y$ is ample.
Then, the claim follows by Lemma \ref{lemma_convex_combination}.
\end{proof}

\begin{Lemma}\label{lemma_bound_for_epsilon}
Fix an integer $n\in \mathbb{N}$, a volume $v\in \mathbb{Q}_{>0}$, and a finite subset $I\subseteq (0,1] \cap \mathbb{Q}$.
There is $0<\epsilon_0$, only depending only on $n$ and $v$, such that, for every $\bQ$-stable pair $(Y,\Delta_Y;D_Y)$ of dimension $n$, coefficients in $I$, and polynomial $p(t)$ with $p(1) = v$, the pair $(Y,(1-\epsilon)D_Y+\Delta_Y)$ is stable for every $0<\epsilon < \epsilon_0$.
\end{Lemma}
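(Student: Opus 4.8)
The plan is to deduce the statement from Lemma~\ref{lemma_bound_epsilon_lc_case}: that lemma produces its $\epsilon_0$ out of the dimension $n$ and an upper bound $M$ for the Cartier index of the relevant canonical model, so the whole task is to bound $M$ uniformly, using only $n$ and $v$ (with $I$ fixed throughout). First I would reduce to the normal case. Pass to the normalization $\nu\colon Y^n\to Y$, write $\Delta^\nu$ for the conductor divisor on $Y^n$, and set $\Delta_{Y^n}\coloneqq\Delta^\nu+\nu^{-1}_*\Delta_Y$ and $D_{Y^n}\coloneqq\nu^{-1}_*D_Y$. Since the boundary of an slc pair contains no component of the conductor we have $D_{Y^n}=\nu^*D_Y$, and since $K_Y+\Delta_Y$ and $D_Y$ are $\qq$-Cartier we get $\nu^*(K_Y+tD_Y+\Delta_Y)=K_{Y^n}+tD_{Y^n}+\Delta_{Y^n}$ for every $t$. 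Hence, decomposing $Y^n$ into its irreducible components if necessary, one obtains normal $p$-stable pairs of dimension $n$, whose polynomials take value at most $v$ at $t=1$, and whose coefficients now lie in the finite set $I\cup\{1\}$. As $\nu$ is finite and surjective, ampleness of $K_Y+(1-\epsilon)D_Y+\Delta_Y$ is equivalent to ampleness of $K_{Y^n}+(1-\epsilon)D_{Y^n}+\Delta_{Y^n}$, and $(Y,(1-\epsilon)D_Y+\Delta_Y)$ is automatically slc, being obtained from the slc pair $(Y,D_Y+\Delta_Y)$ by lowering a coefficient; so it suffices to treat the normal case.

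Assume then $Y$ normal. By Remark~\ref{remark_KX+D+delta_is_nef} the divisor $K_Y+D_Y+\Delta_Y$ is nef and big, and by assumption~(A) it is semi-ample, hence it induces the canonical model $\pi\colon Y\to X$ of the lc pair $(Y,D_Y+\Delta_Y)$, with $(X,D_X+\Delta_X)$ a stable pair. Since $\pi$ is birational, $(K_X+D_X+\Delta_X)^{\dim X}=(K_Y+D_Y+\Delta_Y)^{\dim X}\le v$, and the coefficients of $D_X+\Delta_X$ lie in $I\cup\{1\}$. Invoking the boundedness of stable pairs of dimension $n$, volume at most $v$, and coefficients in the fixed finite set $I\cup\{1\}$ (\cite{HMX18}), the set of such canonical models is bounded; in particular there is a number $M$, independent of the polynomial $p$ and of the chosen pair, bounding the Cartier index of $K_X+D_X+\Delta_X$.

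With this $M$ in hand, I would apply Lemma~\ref{lemma_bound_epsilon_lc_case} to $(Y,\Delta_Y;D_Y)$ (to each component of $Y^n$, if it was disconnected): it yields $\epsilon_0=\epsilon_0(n,M)$, thus depending only on $n$ and $v$, such that $(Y,(1-\epsilon)D_Y+\Delta_Y)$ is stable for all $0<\epsilon<\epsilon_0$. By the reduction above, the same $\epsilon_0$ works for the original, possibly non-normal, pair, which is the claim.

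The main obstacle is the middle step: one must control the canonical model uniformly. This depends on assumption~(A) to guarantee that the canonical model exists at all — abundance for nef and big log canonical divisors being known only up to dimension $3$ and in the klt case — and on the boundedness of the moduli of stable pairs to obtain the uniform index bound $M$ over all relevant polynomials and pairs. Everything else, namely the transfer of ampleness and of the slc property along the finite morphism $\nu$ and the bookkeeping of the volume and coefficients of the canonical model, is routine.
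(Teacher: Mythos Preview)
Your argument is correct and follows the same strategy as the paper's proof: normalize, take canonical models of the irreducible components (which exist by assumption~(A)), bound their Cartier indices via boundedness of stable pairs, and then apply Lemma~\ref{lemma_bound_epsilon_lc_case}. The one point where the paper is more careful is the boundedness input: you invoke boundedness of stable pairs of volume \emph{at most} $v$, which is not the form of the statement in \cite{HMX} (that is for a fixed volume); the paper instead observes that the volumes of the components of the normalization sum to $v$, and by \cite{HMX14}*{Theorem~1.3} there are only finitely many such decompositions, so the relevant volumes lie in a finite set $\mathcal{S}$ to which \cite{HMX} applies volume by volume.
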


\begin{proof}
Without loss of generality, we may assume from now on that $1 \in I$.
Let $Y^n\to Y$ be the normalization of $Y$.
Let $(Y^n,\Delta_Y^n)$ be the pair induced by $(Y,\Delta_Y)$, and let $D_Y^n$ denote the pull-back of $D_Y$ to $Y^n$.
Here, $\Delta_Y^n$ consists of the sum of the divisorial part of the preimage of $\Delta_Y$ and the conductor.
Notice that $Y^n$ possibly has more than one connected component.
Then, we have:
\begin{enumerate}
     \item $(Y^n,D_Y^n+\Delta_Y^n)$ has still coefficients in $I$;
     \item $K_{Y^n} + D_Y^n+\Delta_Y^n$ is semi-ample and big; and
     \item $(Y^n,\Delta_Y^n;D_Y^n)$ is a $\bQ$-stable pair.
\end{enumerate}
From \cite{HMX14}*{Theorem 1.3},
there are finitely many possibilities to write $v$ as $v = \sum v_i$, where $v_i>0$ is the volume of a log canonical pair of general type with coefficients in $I$.
So from (1), there is a finite set $\sS=\{v_1,...,v_k\}$ such that the volume of each connected component of $Y^n$ is in $\sS$. 

For a log canonical pair of general type $(Z,D)$,
the canonical model $(Z^c,D^c)$ 
is such that $\vol(Z,D) = \vol(Z^c,D^c)$.
In particular, it follows from (2) that the irreducible components of $(Y^n,D_Y^n+\Delta_Y^n)$ admit a canonical model, and the volumes of these models are contained in $\cS$ as well.
From \cite{HMX}*{Theorem 1.1}, there is an $M>0$ such that every stable pair with coefficients in $I$, dimension $n$, and volume in the finite set $\sS$ has Cartier index less than $M$.
Since we can check ampleness after passing to the normalization, and the Cartier indexes of each irreducible component of the normalization are bounded, the thesis follows from Lemma \ref{lemma_bound_epsilon_lc_case}.
\end{proof}

\begin{Lemma} \label{lift stable pair slc}
Let $(Y,D_Y)$ be a stable pair.
Then, there exists a $\bQ$-stable pair $(X;D)$ having $(Y,D_Y)$ as canonical model.
\end{Lemma}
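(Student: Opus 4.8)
The only obstruction to a stable pair $(Y,D_Y)$ being a $p$-pair is that $D_Y$ (equivalently $K_Y$) need not be $\qq$-Cartier, whereas $K_Y+D_Y$ is $\qq$-Cartier and ample. The plan is therefore to replace $Y$ by a birational model $f\colon X\to Y$ on which the strict transform $D$ of $D_Y$ becomes $\qq$-Cartier and, crucially, $-D$ becomes $f$-ample. The point of the latter is that, once $K_X+D=f^\ast(K_Y+D_Y)$ and $-D$ is $f$-ample, the $\qq$-divisor $-D+mf^\ast(K_Y+D_Y)$ is ample for $m\gg 0$, so $K_X+(1-\eta)D=\eta\bigl(-D+\tfrac1\eta f^\ast(K_Y+D_Y)\bigr)$ is ample for every sufficiently small $\eta>0$; together with the fact that $(X,(1-\eta)D)$ is slc, this is exactly the stability demanded in Definition \ref{def_p-pair}. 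I would first carry this out when $Y$ is normal and then reduce the general case to it.

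Assume $Y$ is normal, so $(Y,D_Y)$ is lc. Fix a $\qq$-factorial dlt modification $g\colon X_0\to Y$ of $(Y,D_Y)$ --- which exists unconditionally --- so that $(X_0,D_{X_0})$ is $\qq$-factorial dlt, $D_{X_0}$ is the strict transform of $D_Y$ plus the reduced exceptional lc places, and $K_{X_0}+D_{X_0}=g^\ast(K_Y+D_Y)$. Fix $\eta_1\in(0,1)\cap\qq$; the pair $(X_0,(1-\eta_1)D_{X_0})$ is klt and $K_{X_0}+(1-\eta_1)D_{X_0}$ is (trivially) big over the birational base $Y$, so I would pass to its relative log canonical model $f\colon X\to Y$, which exists by the relative finite generation theorem of \cite{BCHM} (the relative MMP being birational, hence terminating, cf.\ \cite{HX13}). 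Write $D$ for the strict transform of $D_{X_0}$ on $X$. Since crepancy of the coefficient-one boundary is preserved along a relative MMP over $Y$, one has $K_X+D=f^\ast(K_Y+D_Y)$ and $f_\ast D=D_Y$ (a boundary component dominating a component of $D_Y$ cannot be contracted over $Y$); the pair $(X,D)$ is lc and crepant to $(Y,D_Y)$, so $X$ is slc; and $K_X+(1-\eta_1)D$ is ample over $Y$, so subtracting $K_X+D=f^\ast(K_Y+D_Y)$ shows that $D$, and hence $K_X$, is $\qq$-Cartier and that $-D$ is $f$-ample. Now set $p(t):=(K_X+tD)^{\dim X}\in\qq[t]$ and let $I$ be the finite set of coefficients of $D$, so $I\subseteq(0,1]\cap\qq$. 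Then $(X;D)$ is a $p$-pair, it is $p$-stable by the ampleness observation of the first paragraph, and, since $f$ is a contraction onto $Y$ with $K_X+D=f^\ast(K_Y+D_Y)$ and $K_Y+D_Y$ ample, the projection formula identifies $\cA=\bigoplus_{m\ge 0}H^0(X,\cO_X(md(K_X+D)))$ with $\bigoplus_{m\ge 0}H^0(Y,\cO_Y(md(K_Y+D_Y)))$; this is finitely generated with $\Proj(\cA)=Y$ and induced boundary $f_\ast D=D_Y$, so $(Y,D_Y)$ is the ample model of $(X;D)$.

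For general demi-normal $Y$ I would normalize, $n_Y\colon(Y^n,D_Y^n+\Delta_Y)\to(Y,D_Y)$, with $\Delta_Y$ the conductor, $\tau_Y$ the gluing involution and $K_{Y^n}+D_Y^n+\Delta_Y=n_Y^\ast(K_Y+D_Y)$ ample, and run the normal-case construction relative to $Y^n$ while keeping the conductor at coefficient $1$ throughout (taking the relative log canonical model over $Y^n$ of $(X_0^n,\Delta_{X_0}+(1-\eta_1)D_{X_0}^n)$ for a $\qq$-factorial dlt modification $X_0^n\to Y^n$). This yields a normal $p$-stable pair $(X^n,\Delta_X;D_X^n)$ crepant to $(Y^n,D_Y^n+\Delta_Y)$ with $K_{X^n}+\Delta_X$ and $D_X^n$ both $\qq$-Cartier, $-D_X^n$ ample over $Y^n$, and $\Delta_X$ mapping birationally and crepantly onto $\Delta_Y$. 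I would then use Koll\'ar's gluing theory for slc pairs (\cite{Kol13}*{Chapter 5}) to descend $\tau_Y$ along this crepant birational identification to an admissible gluing involution on the normalization of $\Delta_X$, glue to obtain a demi-normal $(X,D)$ with normalization $(X^n,\Delta_X+D_X^n)$ and normalization square crepant-compatible with that of $(Y,D_Y)$, and apply Lemma \ref{lemma:canonical:model:of:slc:pairs:has:connected:fibers} to obtain $\beta\colon X\to Y$ with $\beta_\ast\cO_X=\cO_Y$ and $\beta^\ast(K_Y+D_Y)=K_X+D$; the normal-case argument then applies, using Lemma \ref{lemma:connected:fivers:vs:cohomol:connected:fibers} and Remark \ref{remark:cohom:connected:implies:one:can:take:global:sections:after:pull:back} for the section rings. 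The conceptual content is entirely in the normal case --- passing to the relative model on which $-D$ is forced to be relatively ample. The part I expect to be delicate is the general case: arranging the construction over $Y^n$ so that the conductor is left undisturbed (so that $\tau_Y$ really descends to an honest involution and the hypotheses of Lemma \ref{lemma:canonical:model:of:slc:pairs:has:connected:fibers} are met on the nose), together with the routine but necessary check that the relative MMPs over the birational bases $Y$ and $Y^n$ terminate.
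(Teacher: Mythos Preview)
Your normal-case argument is correct: taking a dlt modification and then the relative log canonical model of $(X_0,(1-\eta_1)D_{X_0})$ over $Y$ (which exists by BCHM since the pair is klt and the morphism is birational) produces a model on which $K_X+D=f^*(K_Y+D_Y)$ and $-D$ is $f$-ample, exactly as you say. Note that this $X$ is in general \emph{not} the same variety the paper builds: the paper takes the semi-canonical modification of $(Y,0)$ in the sense of \cite{Fuj15}, so its $X$ has canonical singularities and $K_X$ is $\pi$-ample; your $X$ only has $(X,(1-\eta_1)D)$ klt. Both satisfy $K_X$ relatively ample over $Y$ (since $K_X\equiv_Y -D$ in either picture), and both yield valid $p$-stable lifts, so the discrepancy is harmless for the lemma.

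The non-normal case, however, has a genuine gap, and it is precisely the point you flag as delicate. Your construction of $X^n\to Y^n$ is the relative log canonical model of $(Y^n,\Delta_Y+(1-\eta_1)D_Y^n)$ over $Y^n$; this is local on $Y^n$, but the germ it depends on is the germ of the \emph{ambient} pair $(Y^n,\Delta_Y+D_Y^n)$, not merely the germ of $(\Delta_Y^n,\operatorname{Diff})$. The gluing involution $\tau_Y$ only preserves the latter. Concretely, the map $\Delta_X^n\to\Delta_Y^n$ can contract curves $C\subset\Delta_X\cap\operatorname{Supp}(D_X^n)$ (any $f$-contracted curve lies in $\operatorname{Supp}(D_X^n)$, but such curves can certainly meet $\Delta_X$), and there is no intrinsic characterisation of $\Delta_X^n$ as a model over $\Delta_Y^n$ depending only on $\tau_Y$-invariant data. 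So one cannot in general lift $\tau_Y$ to an honest involution $\tau_X$ on $\Delta_X^n$, and Koll\'ar's gluing does not apply. (Contrast this with the situation in Lemma~\ref{Lemma:slc:canonical:model:for:p:pairs}, which goes in the \emph{opposite} birational direction: there $\Delta_Y^n$ is the ample model of $(\Delta_X^n,\operatorname{Diff})$, an intrinsic construction, so the involution descends.)

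The paper sidesteps this entirely by invoking \cite{Fuj15}: the semi-canonical modification is defined directly for demi-normal $Y$, is an isomorphism near the generic points of the double locus, and is compatible with normalization (so the normalization of the modification is the modification of the normalization). No gluing is needed; one only checks $D\geq 0$ on the normalization via the negativity lemma. If you want to salvage your route, you would need to replace the relative log canonical model of $(Y^n,\Delta_Y+(1-\eta_1)D_Y^n)$ by a construction that depends only on $(Y^n,\Delta_Y)$ near $\Delta_Y$---which is essentially what \cite{Fuj15} provides.
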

 Observe in particular that from Lemma \ref{ample model p-pair}, the morphism $\pi \colon X\to Y$ induces an isomorphism at the generic point of the codimension one singular locus of $X$ and $Y$, as those are points not contained in $D$, and $\pi$ is an isomorphism away from the exceptional locus. So $X$ is normal if and only if its codimension one singular locus is empty and if and only if the one of $Y$ is empty.

\begin{proof}
We consider the semi-canonical modification $\pi \colon X \rar Y$ of $(Y,0)$, in the sense of \cite{Fuj15}.
We observe that, in order to consider a semi-canonical modification, $K_Y$ does not need to be $\qq$-Cartier.
Such modification exists by \cite{Fuj15}*{Theorem 1.1} and the fact that $Y$ is demi-normal.
By \cite{Fuj15}*{Definition 2.6}, $\pi \colon X \rar Y$ has the following properties:
\begin{itemize}
    \item $\pi$ is an isomorphism around every generic point of the double locus of $X$;
    \item this procedure is compatible with taking the normalizations of $X$ and $Y$, see \cite{Fuj15}*{Lemma 3.7.(2)}.
    In particular, $X$ is normal if so is $Y$, and, in general, $\pi$ establishes a bijection between irreducible components of $X$ and $Y$; and
    \item $K_X$ is $\qq$-Cartier and $\pi$-ample.
\end{itemize}
Now, set $K_X + D = \pi^*(K_Y+D)$.
Since $K_Y+D$ is ample, for $0 < \epsilon \ll 1$, we have that $\epsilon K_X + (1-\epsilon)(K_X+D)=K_X + (1-\epsilon)D$ is ample.
Since both $K_X$ and $K_X+D$ are $\qq$-Cartier, then so is $D$.
Lastly, as $X$ and $(Y,D_Y)$ are semi-log canonical, to conclude it suffices to show that $D \geq 0$.

Now, let $(X^\nu,D^\nu + \Delta^\nu)$ and $(Y^\nu,D_Y^\nu + \Delta^\nu_Y)$ denote the respective normalizations, where $\Delta^\nu$ and $\Delta_Y^\nu$ denote the double loci.
Thus, it suffices to show that $D^\nu \geq 0$.
Since this can be checked by considering one irreducible component of $X^\nu$ at the time, by abusing notation, we may assume that $X^\nu$ and $Y^\nu$ are irreducible.
By construction, we have
\[
\K X^\nu. + (1-\epsilon)D^\nu + \Delta^\nu \sim_{\qq}-\epsilon D^\nu \sim_{\qq} \epsilon(\K X^\nu. + \Delta^\nu) /Y^\nu,
\]
and $\K X^\nu. + \Delta^\nu$ is relatively ample over $Y^\nu$.
Then, since we have $D^\nu_Y \geq 0$, by the negativity lemma \cite{KM98}*{Lemma 3.39}, it follows that $D^\nu \geq 0$.
This concludes the proof.
\end{proof}

\subsection{Families of pairs} 
We recall the main definitions of families of pairs from \cite{kol_new}*{Ch. 4} and \cite{kol19s}. A \emph{family of pairs} $f \colon (X,D) \rar S$ over a reduced base $S$ is the datum of a morphism $f \colon X \rar S$ and an effective $\qq$-divisor $D$ on $X$, such that the following conditions hold:

\begin{itemize}
    \item $f$ is flat with reduced fibers of pure dimension $n$;
    \item the fibers of $\Supp(D) \rar S$ are either empty or of pure dimension $n-1$; and
    \item $f$ is smooth at the generic points of $X_s \cap \Supp(D)$ for every $s \in S$.
\end{itemize}

Furthermore, we say that a family of pairs is \emph{well defined} if it also satisfies the following property:

\begin{itemize}
    \item $mD$ is Cartier locally around the generic point of each irreducible component of $X_s \cap \Supp(D)$ for every $s \in S$, where $m \in \nn$ is a sufficiently divisible natural number clearing the denominators of $D$.
\end{itemize}

This latter condition guarantees that $mD$ is Cartier on a big open set $U \subset X$ with the property that $U \cap X_s$ is a big open set of $X_s$ for every $s \in S$.
This guarantees that we have a well-defined notion of pull-back of $D$ under any possible base change $S' \rar S$, as we can pull back $mD|_U$, take its closure in $X \times_S S'$, and then divide the coefficients by $m$.

There is a more general definition of families of divisors, over possibly non-reduced bases due to Koll\'ar in \cite{kol19s}. We will not report it here since we will not need it, we refer the interested reader to \emph{loc. cit.}

A well defined family of pairs $f \colon (X,D) \rar S$ over a reduced base $S$ is called \emph{locally stable} if, for every base change $g \colon (X_C,D_C) \rar C$ where $C$ is the spectrum of a DVR with closed point $0$, $(X_C,D_C+X_0)$ is a semi-log canonical pair. 
Then, a family $f \colon (X,D) \rar S$ is called \emph{stable} if it is locally stable, $f$ is proper, and $\K X/S. + D$ is $f$-ample.

\subsection{Families of $\bQ$-pairs}

Let us fix a positive integer $n$, a polynomial $p(t)\in \bQ[t]$, and a finite set of coefficients $I \subset (0,1] \cap \qq$.
Let $r$ be the index of $I$, see \S~\ref{indice}.
Before we introduce our main functor, we discuss some applications of the abundance conjecture that are relevant for this paper.
\begin{Notation}

Throughout the paper, when we say ``assume that assumption (A) holds'' we mean ``assume that the following condition holds'': 
\begin{center}
if $(X,D)$ is a log canonical pair such that $K_X + (1-\epsilon)D$ is ample for $0<\epsilon \ll 1$, then $K_X + D$ is semi-ample.
\end{center}
\end{Notation}
\begin{Remark}
We remark that assumption (A) is a weakening of the Abundance Conjecture, which is known up to dimension 3.
Moreover, assumption (A) is known to hold for klt pairs $(X,D)$ (it is the basepoint-free theorem).

\end{Remark}

\begin{Def}\label{Def:functor}
Let $\sF_{n,p,I}'$ be the category fibered in groupoids over $\operatorname{Sch}/\bC$ whose fibers over a scheme $B$  consists of:
\begin{itemize}
    \item a flat and proper morphism $f \colon \cX\to B$ of relative dimension $n$;
    \item a flat and proper morphism $\cD\to B$ of relative dimension $n-1$ and relatively $S_1$;
    \item a closed embedding $i \colon \cD\to \cX$ over $B$; and
    \item for every point $b\in B$, the fiber $(\cX_b;\frac{1}{r}\cD_b) $ is $\bQ$-stable with polynomial $p(t)$ and coefficients in $I$, where $r$ is the index of $I$, see \S~\ref{indice}.
\end{itemize}
For every morphism $B'\to B$, we denote by  $j_{B'} \colon \cX\times_B B'\to \cX$ the first projection, and by $\cX'$ the fiber product $\cX\times_B B'$.
Similarly, for every point $b\in B$, we denote by $\cD_b \coloneqq \spec(k(b))\times_B\cD$.
If we denote by $\cI_\cD$ the ideal sheaf of $\cD$, we require that for every $B'\to B$ and every $m$, $m'$, the natural map \begin{equation}\label{eq_K_cond}\tag{K}
    j_{B'}^*((\omega_{\cX/B}^{\otimes m'}\otimes\cI_D^{\otimes m})^{[1]})\to (\omega_{\cX'/B'}^{\otimes m'}\otimes j^*_{B'}(\cI_D^{\otimes m}))^{[1]}\end{equation} is an isomorphism.
We will denote by $(\cX;\cD)\to B$ an object of $\sF_{n,p,I}'$ over $B$, and we will call it a \emph{weak family of $\bQ$-stable pairs}.
\end{Def}

\begin{Def}
We will call \emph{weak $\bQ$-stable morphism} the datum of a flat and proper morphisms $f \colon \cX\to B$ and a closed embedding $i \colon \cD\to \cX$ that satisfies the four bullet points of Definition \ref{Def:functor}.
If there is no ambiguity we will still denote it with $(\cX;\cD)\to B$.
Lastly, if $\K \mathcal{X}/B. + \frac{1}{r}\mathcal{D}$ is relatively semi-ample, we say it is a {\it $\bQ$-stable morphism}.
\end{Def}

The condition (K) on commutativity with base change in Definition \ref{Def:functor} is usually referred to as \emph{Koll\'ar's condition}. 

We need an additional condition to prove that $\sF_{n,p,I}'$ is representable in full generality. Indeed, while proving representability if one restricts itself to the category of \emph{reduced} schemes (i.e., if one is only interested in families over a reduced base) essentially follows from \cite{HX13}, to prove that $\sF_{n,p,I}'$ is representable in general we will need some version of assumption (A) to hold in families. If one works with moduli of surfaces or threefolds, then the Abundance Conjecture is known and there are no issues.
Otherwise one possible solution, which was suggested to us by Koll\'ar, is to only consider families $(\cX;\cD)\to B$ such that ``assumption (A) holds in families''.
The advantage of this approach is that the functor can be defined in any dimension unconditionally to the Abundance Conjecture.

\begin{Def}[Koll\'ar]\label{def families A+} Let $\sF_{n,p,I}$ be the functor representing families $(\cX;\cD)\to B$ in $\sF_{n,p,I}'$ such that
there is a family of stable pairs $(\cX^s,\frac{1}{r}\cD^s)\to B$ together with a morphism $\cX\to \cX^s$ such that for every $b\in B$, the restriction $(\cX_b,\frac{1}{r}\cD_b)\to (\cX^s_b;\frac{1}{r}\cD^s_b)$ is the canonical model of $(\cX_b,\frac{1}{r}\cD_b)$. We will denote these families as \textit{families of $\bQ$-stable pairs.}
\end{Def}
The main advantage of dealing with assumption (A) in this way
is proved \cite{kol_new}*{Proposition 8.36}, which he  kindly shared with us, where he proves that imposing assumption (A) in families is a constructible condition:

\begin{theorem}[Koll\'ar]\label{teorema kollar constructible}
Given a proper locally stable morphism
$(X,\Delta)\to S$ there is a locally closed partial decomposition $S'\to S$ such that for any $T\to S$, the pull-back $(X_T,\Delta_T)\to S$ has a simultaneous, canonical, crepant, birational contraction $(X_T,\Delta_T)\to(X_T^s,\Delta_T^s)\to T$ with $(X_T^s,\Delta_T^s)\to T$ stable iff $T\to S$ factors via $T\to S'$.
\end{theorem}
We will use this result for constructing $\sF_{n,p,I}$, and to show that it is bounded.

\newtheorem*{Summary_notation*}{Summary of notations}


\begin{Summary_notation*}
As it might be confusing to remember all the different definitions of families, we recall the essential differences here:
\begin{enumerate}
    \item in \textit{weak family of $\bQ$-stable pairs}, we require condition (K) but not condition (A);
    \item in \textit{weak} $\bQ$\textit{-stable morphism}, we require neither condition (K) nor condition (A);
    \item in $\bQ$\textit{-stable morphism}, we require condition (A) but not condition (K); and
    \item in \textit{family of} $\bQ$\textit{-stable pairs}, we require both condition (A) and condition (K).
\end{enumerate}
In particular, we antepone ``weak'' if we are not requiring condition (A), and we write ``family'' if we require condition (K).
This choice will be maintained when introducing the notion of ``constant part'' in \S~\ref{sec_constant_part}.
\end{Summary_notation*}

Now, we add a series of remarks and technical statements that are relevant in this context.
We keep the notation of Definition \ref{Def:functor}.
\begin{Remark}
Observe that, with the notation of Definition \ref{Def:functor} and Notation \ref{notation_Dsc}, $(\frac{1}{r}\cD_b)^{sc} = \cD_b$.
\end{Remark}
\begin{Remark} \label{remark_ideal_flat}
The ideal sheaf $\mathcal{I}_\cD$ is flat over $B$.
This follows from the fact that $\O \cX.$ and $\O \cD.$ are flat, by considering the associated long exact sequence of $\operatorname{Tor}$.
\end{Remark}

\begin{Remark}
Since $f$ is flat and by condition (3) its fibers are $S_2$, $f$ is relatively $S_2$.
Then, since $\cD\to B$ is flat and relatively $S_1$, it follows from \cite{Kol13}*{Corollary 2.61} that $\cI_\cD$ is relatively $S_2$.
\end{Remark}

\begin{Remark}
By condition (3), there exists an open subset $V\subseteq \cX$ whose restriction to any fiber is a big open subset, such that $\cD_b$ is a Cartier divisor on $V_b$ for every $b \in B$.
Then, by flatness, we may apply \cite{stacks-project}*{Tag 062Y}, and we conclude that $\cD$ is Cartier along $V$.
Then, by Remark \ref{remark_ideal_flat} and \cite{HK04}*{Proposition 3.5}, $\O \cX.$ and $\cI_\cD$ are reflexive.
In particular, we have $(\cI_\cD)^{[1]} = \cI_\cD$, and $\cI_\cD$ satisfies the conditions in \cite{kol_new}*{Definition 3.28}.
\end{Remark}

\begin{Remark}
Observe that in Definition \ref{Def:functor}, the divisor $\cD_b$ in point (3) was defined via a fiber product.
However, according to our conventions (see \S~\ref{divisorial sheaves}) it should also correspond to a Weil divisor on $\cX_b$.
This is true since $\cD\to B$ is relatively $S_1$, so $\cD_b$ is $S_1$ so its ideal sheaf in $\cX_b$ is $S_2$ from \cite{Kol13}*{Corollary 2.61}. 
\end{Remark}

\begin{Remark}
The sheaves $\cI_\cD^{[m]}$ are ideal sheaves of $\cO_\cX$.
Indeed, we can again consider an open subset $V\subseteq \cX$ whose restriction to any fiber is a big open subset and such that $\cD$ is a Cartier divisor on $V$.
Then, if we denote by $i \colon V\to \cX$ the inclusion of $V$, by \cite{HK04}*{Corollary 3.7} we have
\[
\cI_\cD^{[m]} = i_*(\cO_V(-m\cD|_{V})).
\]
Then, the inclusion 
$\cO_V(-m\cD|_{V})\hookrightarrow \cO_V$ can be pushed forward via $i$, to have an inclusion $\cI_\cD^{[m]}\hookrightarrow i_*(\cO_V)=\cO_\cX$, where the last equality follows from \cite{HK04}*{Proposition 3.5} since $\cX\to B$ is $S_2$.
\end{Remark}
\begin{Notation}\label{Notation_mD_for_pstable_fam}
If $f \colon (\cX;\cD)\to B$ is a weak family of $\bQ$-stable pairs, we denote by $m\cD$ the closed subscheme of $\cX$ with ideal sheaf $\cI^{[m]}_\cD$. 
\end{Notation}

\begin{Remark}\label{remark:bc:implies:flat}
By \cite{AH11}*{Proposition 5.1.4}, the sheaves $\cI_\cD^{[m]}$ and $\omega_{\cX/B}^{[m]}$ are flat over $B$ for every $m\in \bZ$.
\end{Remark}

\begin{Lemma}
The morphism $m\cD\to B$ is flat with $S_1$ fibers (i.e., the fibers have no embedded points).\end{Lemma}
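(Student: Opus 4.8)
The plan is to prove the two assertions separately: flatness of $m\cD\to B$ will follow from Koll\'ar's condition~(1) of Definition~\ref{Def:functor} together with the flatness of $\cI_\cD^{[m]}$, and the $S_1$ property of the fibres from a depth estimate on $\cX_b$. For flatness, both $\cI_\cD^{[m]}$ and $\cO_\cX$ are flat over $B$ --- the first by Remark~\ref{remark:bc:implies:flat}, the second since $f$ is flat --- so from $0\to\cI_\cD^{[m]}\to\cO_\cX\to\cO_{m\cD}\to 0$ and the long exact $\operatorname{Tor}^{\cO_B}_\bullet(-,k(b))$ sequence we obtain, for each $b\in B$, an isomorphism $\operatorname{Tor}_1^{\cO_B}(\cO_{m\cD},k(b))\cong\ker\big(\cI_\cD^{[m]}\otimes_{\cO_B}k(b)\to\cO_{\cX_b}\big)$. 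Since $\cI_\cD$ is flat over $B$ (Remark~\ref{remark_ideal_flat}), restricting $0\to\cI_\cD\to\cO_\cX\to\cO_\cD\to 0$ to the fibre over $b$ identifies $\cI_\cD\otimes_{\cO_B}k(b)$ with the ideal sheaf $\cI_{\cD_b}$ of $\cD_b\subset\cX_b$; then condition~(1) of Definition~\ref{Def:functor} applied to $B'=\spec k(b)$ yields $\cI_\cD^{[m]}\otimes_{\cO_B}k(b)\cong(\cI_{\cD_b})^{[m]}=\cO_{\cX_b}(-mD_b)$, where $D_b\ge 0$ is the effective Weil divisor corresponding to $\cD_b$ on the demi-normal scheme $\cX_b$ (recall $\cX_b$ is demi-normal, being the underlying scheme of the $p$-stable pair $(\cX_b;\frac{1}{r}\cD_b)$, and $\cD_b$ is $S_1$, so $\cI_{\cD_b}$ is divisorial). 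Exactly as in the remark identifying the $\cI_\cD^{[m]}$ with ideal sheaves of $\cO_\cX$ --- using a big open subset of $\cX_b$ on which $\cD_b$ is Cartier together with \cite{HK04}*{Corollary~3.7 and Proposition~3.5} --- this sheaf is a sheaf of ideals of $\cO_{\cX_b}$, so the map above is injective, $\operatorname{Tor}_1^{\cO_B}(\cO_{m\cD},k(b))=0$ for every $b$, and by the local criterion of flatness $\cO_{m\cD}$ is flat over $B$.

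It then remains to show that each fibre of $m\cD\to B$ is $S_1$. As $\cO_{m\cD}$ is now flat over $B$, tensoring $0\to\cI_\cD^{[m]}\to\cO_\cX\to\cO_{m\cD}\to 0$ with $k(b)$ stays exact, so $\cO_{(m\cD)_b}=\cO_{\cX_b}/\cO_{\cX_b}(-mD_b)$; that is, $(m\cD)_b$ is the scheme-theoretic effective divisor $mD_b$ on $\cX_b$. Suppose $x$ were an embedded point of it. Then $\operatorname{depth}\cO_{(m\cD)_b,x}=0$ while $\dim\cO_{(m\cD)_b,x}\ge 1$, so $x$ lies on $\Supp(D_b)$ without being a generic point of any of its components; since $\Supp(D_b)$ has codimension one in $\cX_b$ and $\cX_b$ is catenary (finite type over $\bC$), this forces $\dim\cO_{\cX_b,x}\ge 2$. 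But $\cO_{\cX_b}(-mD_b)$ is a divisorial sheaf and $\cX_b$ is $S_2$, so both terms have depth $\ge 2$ at $x$; applying the depth lemma to $0\to\cO_{\cX_b}(-mD_b)\to\cO_{\cX_b}\to\cO_{(m\cD)_b}\to 0$ gives $\operatorname{depth}\cO_{(m\cD)_b,x}\ge 1$, a contradiction. Hence $(m\cD)_b$ has no embedded points, i.e.\ it is $S_1$.

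The only step that uses the hypotheses in an essential way --- and hence the crux --- is the base-change identity $\cI_\cD^{[m]}\otimes_{\cO_B}k(b)\cong(\cI_{\cD_b})^{[m]}$: flatness of $\cI_\cD^{[m]}$ and $\cO_\cX$ alone does not force the cokernel $\cO_{m\cD}$ to be flat, and it is Koll\'ar's condition that both gives this flatness and lets us reduce the $S_1$ statement to the geometry of the single demi-normal scheme $\cX_b$. The remaining ingredients --- the $\operatorname{Tor}$ bookkeeping, the local criterion of flatness, and the depth lemma --- are routine.
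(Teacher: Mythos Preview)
Your proof is correct and follows essentially the same approach as the paper's: both arguments establish flatness by showing $\operatorname{Tor}_1^{\cO_B}(\cO_{m\cD},k(b))=0$ via the long exact $\operatorname{Tor}$ sequence of $0\to\cI_\cD^{[m]}\to\cO_\cX\to\cO_{m\cD}\to 0$, using Koll\'ar's condition~(1) to identify $j_b^*\cI_\cD^{[m]}$ with a torsion-free (indeed reflexive) sheaf on $\cX_b$ and hence conclude injectivity into $\cO_{\cX_b}$. For the $S_1$ part, the paper simply cites \cite{Kol13}*{Corollary~2.61} applied to the restricted short exact sequence, whereas you unpack the same depth estimate by hand; the content is identical.
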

\begin{proof}
To check that $m\cD\to B$ is flat it suffices to check that for every closed point $\spec(k(b))\to B$ we have $\operatorname{Tor}^1(k(b),\cO_{m\cD}) = 0$. We pull back the exact sequence
\[
0\to \cI_{\cD}^{[m]}\to \cO_{\cX} \to \cO_{m\cD}\to 0
\]
via $j_{b} \colon \cX_b\to \cX$, and we obtain
\[
0 = \operatorname{Tor}^1(k(b),\cO_{\cX}) \to \operatorname{Tor}^1(k(b),\cO_{m\cD}) \to j_{b}^*\cI_{\cD}^{[m]}\to \cO_{\cX_{b}} \to \cO_{(m\cD)_{b}}\to 0.
\]
However, $j_{b}^*\cI_{\cD}^{[m]} \cong (j_{b}^*\cI_{\cD})^{[m]}$, so in particular it is a torsion free sheaf of rank 1.
Then, the map $j_{b}^*\cI_{\cD}^{[m]}\to \cO_{\cX_{b}}$ is injective, so $\operatorname{Tor}^1(k(b),\cO_{m\cD})=0$ as desired.

Finally, $\cI_{\cD_p}^{[m]}$ is $S_2$ and $\cO_{\cX_p}$ is $S_2$ from the commutativity with base change (K) in Definition \ref{Def:functor}, so $\cO_{(m\cD)_p}$ is $S_1$ from \cite{Kol13}*{Corollary 2.61}.\end{proof}

\begin{Remark}\label{remark_for_m_big_enough_kollar_comndition_guarantees_cartier}
There is an $m>0$, which does not depend on $B$, such that $\omega_{\cX/B}^{[m]}$ and $\cI_\cD^{[m]}$ are locally free on $\cX$.
Indeed, we will prove in Theorem \ref{thm_boundedness} that there is an $m$ such that for every fiber $(X;D)$ of $f$, the sheaves $\omega_X^{[m]}$ and $\cO_X(-mD)$ are Cartier.
Since our family is bounded (see Theorem \ref{thm_boundedness}), we can choose such an $m$ that does not depend on the basis $B$.
Then, by condition (K) in Definition \ref{Def:functor} and Remark \ref{remark:bc:implies:flat}, we may apply  \cite{stacks-project}*{Tag 00MH}, which implies that $\omega_{\cX/B}^{[m]}$ and $\cI_\cD^{[m]}$ are locally free since they restrict to locally free sheaves along each fiber.
\end{Remark}

Now, we specify the morphisms in the fibered category $\sF_{n,p,I}$ over a morphism $f \colon T\to B.$
Let $\alpha = ((\cY,\cD_\cY)\to T)$ be an element of $\sF_{n,p,I}(T)$, and let $\beta = ((\cX;\cD_\cX)\to B)$ be an element of $\sF_{n,p,I}(B)$.
An arrow $\alpha \to \beta$ is the datum of two morphisms $(g,h)$ that fit in a diagram like the one below, where all the squares are fibered diagrams:
\[
\xymatrix{\cD_\cY\ar[r]^g\ar[d] & \cD\ar[d] \\ \cY\ar[d] \ar[r]^h & \cX\ar[d] \\ T\ar[r]^f & B.}
\]

\begin{Oss}
The only morphisms over the identity $\Id \colon B\to B$ are isomorphisms.
Thus, $\sF_{n,p,I}$ is fibered in groupoids.
\end{Oss}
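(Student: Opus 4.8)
The plan is to verify the two standard conditions that characterise a category fibered in groupoids over $\operatorname{Sch}/k$ (see \cite{stacks-project}): condition (b), that every arrow lying over an identity morphism of the base is an isomorphism; and condition (a), that every object of $\sF_{n,p,I}$ admits a pullback along every morphism of schemes. Condition (b) is precisely the first sentence of the Observation, and since the arrows of $\sF_{n,p,I}$ are by definition the diagrams \emph{all of whose squares are Cartesian}, the uniqueness clause in the definition of a fibered category holds automatically; so only (a) and (b) require attention. First I would prove (b), then I would record (a).

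For (b): let $(g,h)\colon\alpha\to\beta$ be an arrow over $\Id_B$, where $\alpha=((\cY,\cD_\cY)\to B)$ and $\beta=((\cX,\cD_\cX)\to B)$, with $\cD_\cX$ the divisor written $\cD$ in the defining diagram. By definition of an arrow, the square with $h\colon\cY\to\cX$ on top and $\Id_B$ on the bottom is Cartesian; hence $\cY$, with its structure map to $B$ and with $h$, is a fibre product $B\times_B\cX$, and since the second projection $B\times_B\cX\to\cX$ is an isomorphism, $h$ is an isomorphism of $B$-schemes. Likewise the upper square expresses $\cD_\cY=\cY\times_\cX\cD_\cX$, so that $g$ is the pullback of $\Id_{\cD_\cX}$ along the isomorphism $h$ and is therefore an isomorphism. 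The pair $(g^{-1},h^{-1})$ then fits into two Cartesian squares over $\Id_B$ — a square with an isomorphism on top and the identity below is always Cartesian — so it is an arrow $\beta\to\alpha$ in $\sF_{n,p,I}$, and it is a two-sided inverse of $(g,h)$ by construction. Thus $(g,h)$ is an isomorphism, which is (b).

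For (a): given $\beta=((\cX;\cD_\cX)\to B)$ and a morphism $f\colon T\to B$, I would take $\cX_T\coloneqq\cX\times_B T$ together with the closed subscheme $\cD_T\coloneqq\cD_\cX\times_B T$, and let $j\colon\cX_T\to\cX$ be the projection. Flatness, properness, the relative dimensions $n$ and $n-1$, and the closed embedding $\cD_T\hookrightarrow\cX_T$ are all preserved under base change, so the first three bullet points of Definition \ref{Def:functor} hold for $(\cX_T;\cD_T)\to T$. The morphism $\cD_T\to T$ is again relatively $S_1$: its fibre over $t\in T$ is the base change of the $S_1$ scheme $(\cD_\cX)_{f(t)}$ along the field extension $k(f(t))\subseteq k(t)$, and this base change is flat with reduced fibres (the residue fields of $(\cD_\cX)_{f(t)}$ are separable over the perfect field $k(f(t))$ since $\operatorname{char}\bC=0$), so the permanence of Serre's condition $(S_1)$ under flat base change applies. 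For the same reason each fibre $(\cX_{T,t};\tfrac1r\cD_{T,t})$ is the base change along a characteristic-$0$ field extension of the $p$-stable pair $(\cX_{f(t)};\tfrac1r\cD_{f(t)})$, and hence is itself $p$-stable, because being semi-log canonical, being $\qq$-Cartier, the value of the intersection polynomial $(K+tD+\Delta)^{n}$, membership of the coefficients in $I$, and the ampleness of $K+(1-\epsilon)D+\Delta$ for $0<\epsilon\ll 1$ are all insensitive to such an extension. Finally, Koll\'ar's conditions (1) and (2) for $(\cX_T;\cD_T)\to T$ follow from those for $\beta$: any $T'\to T$ composes with $f$ to a morphism $T'\to B$, and by the conditions for $\beta$ the sheaves $\cI_{\cD_T}^{[m]}$ and $\omega_{\cX_T/T}^{[m]}$ are the pullbacks to $\cX_T$ of $\cI_{\cD_\cX}^{[m]}$ and $\omega_{\cX/B}^{[m]}$. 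Thus $(\cX_T;\tfrac1r\cD_T)\to T$ is an object of $\sF_{n,p,I}(T)$, and the evident map to $\beta$ over $f$ is a Cartesian arrow.

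I do not anticipate a serious obstacle here: the only genuine inputs are the permanence of the relative $S_1$ condition and of semi-log canonicity under characteristic-$0$ field extensions, both of which are standard, and everything else is formal manipulation of Cartesian squares. The reason this goes through so smoothly is that Definition \ref{Def:functor} is engineered for base change — in particular Koll\'ar's conditions (1) and (2) are already quantified over all base changes — so stability of the data under pullback is essentially built in.
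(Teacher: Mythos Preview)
The paper provides no proof for this Observation---it is simply asserted. Your detailed verification is correct and fills in what the paper leaves implicit: the paper only states that morphisms over the identity are isomorphisms (your condition (b)), tacitly treating the existence of Cartesian pullbacks (your condition (a)) as evident from Definition~\ref{Def:functor}, which is designed so that all the data are stable under base change. Your argument for (b) via the Cartesian squares is exactly the intended point, and your check of (a)---including preservation of the relative $S_1$ condition and of $p$-stability under characteristic-zero field extension---is sound, if more thorough than the paper expects the reader to supply.
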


\subsection{Hassett's example} \label{hassett example}
In this subsection, we present a well-known example due to Hassett that is helpful to keep in mind to navigate the rest
of the paper.
See also \cite{KP17}*{\S~1.2} or \cite{kol_new}. 

Consider the DVR $R = \spec(k[t]_{(t)})$, let $\eta$ (resp. $p$) be the generic (resp. closed) point of $\spec(R)$, and let $\cX=\bP^1\times \bP^1 \times \spec(R)$.
Consider $C$ a smooth member of $\cO_{\bP^1\times \bP^1}(1,2)$, and let $\cY$ be the blow-up of $C$ in the special fiber of $\cX$.
Then, if we compose the blow-down $\cY\to \cX$ with the projection $\cX\to \spec(R)$, we get a family of surfaces $\cY\to \spec(R)$ where the generic fiber is a copy of $\bP^1\times \bP^1$, while the special fiber is a surface with two irreducible components.
One irreducible component of the special fiber is isomorphic to $\bP^1\times \bP^1$
(the proper transform of the special fiber of $\cX\to \spec(R)$), and the other one is the exceptional divisor $F$.
The surface $F$ is the projectivization of the normal bundle of $C$.
Since $C\cong \bP^1$ and the normal bundle of $C$ in $\cX$ is isomorphic to $\cO_{\bP^1}(0) \oplus \cO_{\bP^1}(4)$, we have that $F$ is isomorphic to the Hirzebruch surface $\bF_4$.
We denote by $\Delta\subseteq \bF_4$ the preimage of the double locus of the central fiber on $\bF_4$.

We consider a divisor on $\bP^1\times \bP^1$ consisting of five irreducible components, three general members of $\cO_{\bP^1\times \bP^1}(1,2)$ (which we denote by $C_1, C_2, C_3$), $C$, and a smooth member $G$ of $\cO_{\bP^1\times \bP^1}(2,0)$.
We consider a deformation of $C_1+C_2+C_3+C + G$ in $\cX$ given by the trivial deformation of $C$ and $G$, and we deform $C_i$ to $C$ for every $i$.
We denote by $\cD_\cX$ the total space of this deformation, and by $\cD$ its proper transform in $\cY$.
More explicitly, if $C$ is the zero locus of a global section $h\in H^0(\cO_{\bP^1\times \bP^1}(C))$, $G$ is the zero locus of a global section $g\in H^0(\cO_{\bP^1\times \bP^1}(G))$, and $\varphi_1, \varphi_2, \varphi_3$ are generic sections of $H^0(\cO_{\bP^1\times \bP^1}(C))$, the deformation we consider is $\cD_\cX = V(h(t\varphi_1 - h)(t\varphi_2 - h)(t\varphi_3 - h)g)$.

\begin{center}
\includegraphics[scale=0.19]{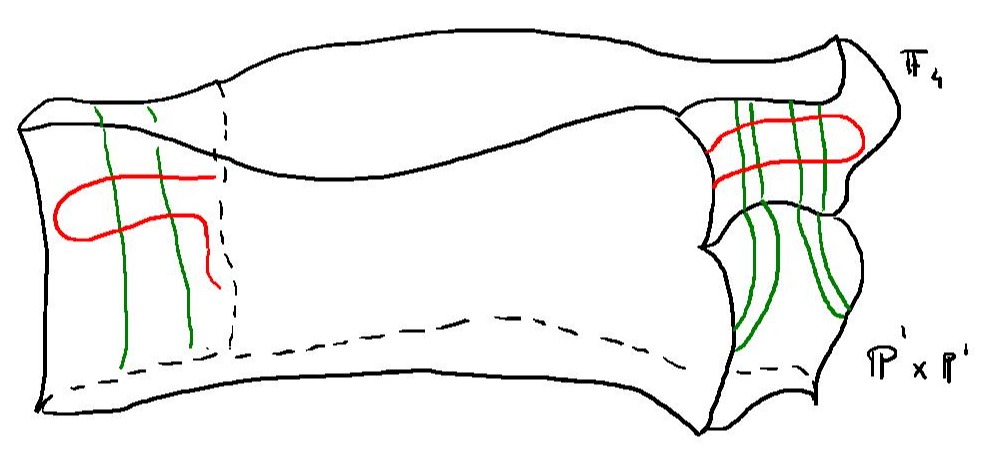}\\ Figure 1: the family $(\cY,c\cD)\to \spec(R).$
\end{center}

Now, we construct the canonical model of $(\cY,c\cD)\to \spec(R)$, with the coefficient $c$ in a neighbourhood of $\frac{1}{2}$.
First, we introduce some notation.
We denote $K_{\cY/\spec(R)}+c\cD$ by $\cL(c)$, the irreducible component of the central fiber isomorphic to $\bP^1\times \bP^1$ by $\sP$, and the two rulings of $\sP$ by $f_1$ and $f_2$, with the convention that $C \equiv f_1+2f_2$.
Since the generic fiber $(\cY,c\cD)_\eta$ is stable for every $c$ with $|c-\frac{1}{2}| \ll 1$, we just need to control the intersection pairings on the special fiber.

Let $c=\frac{1}{2} + \epsilon$, where $|\epsilon| \ll 1$.
\begin{center}
\includegraphics[scale=0.19]{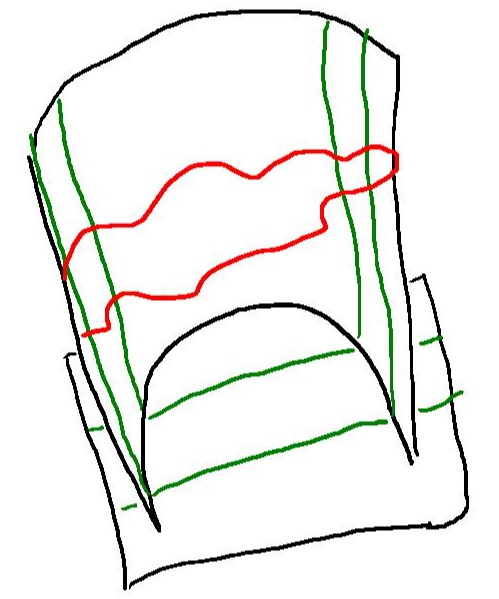} \\ Figure 2: special fiber of the family $(\cY,c\cD)\to \spec(R)$.
\end{center}
One can check that:
\begin{enumerate}
\item when $\epsilon > 0$, the divisor $\cL(\frac{1}{2} + \epsilon)$ is nef.
It is ample on $F$, positive on $f_2$ and 0 on $f_1$;
\item when $\epsilon = 0$, the divisor $\cL(\frac{1}{2})$ is nef.
It is 0 on $\sP$ and on $\Delta$; and
\item when $\epsilon < 0$, the divisor $\cL(\frac{1}{2}+\epsilon)$ is  not nef.
On $\sP$, it is negative on $f_2$ and 0 on $f_1$, while on $F $ it is negative on $\Delta$.
\end{enumerate}
Therefore, we can explicitly describe the special fiber of the canonical model of $(\cY,c\cD)$ over $\spec(R)$. We denote by $(\cZ^+,c\cD^+)$ (resp. $(\cZ^0, c\cD^0)$, $(\cZ^-,c\cD^-)$) the canonical model of $(\cY,c\cD)\to \spec(R)$ when $0<\epsilon\ll 1$ (resp. $\epsilon=0$, $-1 \ll \epsilon < 0$):
\begin{enumerate}
\item when $\epsilon > 0$, to construct the canonical model we contract the ruling $f_1$.
Since via this contraction the map $C\subseteq \bP^1\times \bP^1 \to \bP^1$ is a $2 \colon 1$ ramified cover of $\bP^1$, the special fiber is the push-out of the following diagram:
$$\xymatrix{\bP^1\cong \Delta \ar[r] \ar[d]^{2 \colon 1} \ar[r] & F\ar[d]_\pi \\ \bP^1\ar[r] & \cZ^+_p}$$
\item when $\epsilon = 0$, to construct the canonical model, we contract $\sP$ and $\Delta$.
The special fiber is isomorphic to $\bF_4$ with the section $\Delta$ contracted, which is the projectivization of the cone over a rational quartic curve; and
\item When $\epsilon < 0$, we perform a divisorial contraction to make the divisor nef.
We contract the ruling $f_2$, and the special fiber is $\bF_4$.
\end{enumerate}
In particular, there are morphisms $\pi^+ \colon \bF_4\to \cZ^+_p$,  $\pi^0 \colon \bF_4\to \cZ^0_p$ and $\pi^- \colon \bF_4\to \cZ^-_p$ from $\bF_4$ to the special fibers of $\cZ^+$, $\cZ^0$ and $\cZ^-$.
The divisors $\cD^+_p$, $\cD^0_p$ and $\cD^-_p$ can be described as follows.
First, recall that $\bF_4$ is the projectivization of the normal bundle of $\bP^1$ inside $\cX$, so it has a relative $\cO_{\bF_4}(1)$.
We denote a generic section of $\cO_{\bF_4}(1)$ by $h$.
Then, the following holds:

\begin{enumerate}
\item when $\epsilon > 0$, the divisor $\cD^+_p$ is the image via $\pi^+$ of four general fibers of $F$, together with a divisor linearly equivalent to $4h$.
All the components have coefficient $\frac{1}{2} + \epsilon$;
\item when $\epsilon = 0$, the divisor $\cD^0_p$ is the image via $\pi^0$ of four general fibers with a divisor linearly equivalent to $4h$.
All the components have coefficient $\frac{1}{2}$; and
\item when $\epsilon < 0$, the divisor $\cD^-_p$ consists of four general fibers with a divisor linearly equivalent to $4h$, four generic fibers, and $\Delta$.
All the components have coefficient $\frac{1}{2} + \epsilon$, with the exception of $\Delta$, which has coefficient $1+2\epsilon$.
\end{enumerate}

Recall now that the flat limit of $\cD_{\eta}\subseteq \cZ^0_\eta$ in $\cZ^0_p$ is \emph{not} $S_1$, since it has an embedded point (see \cite{KP17}*{\S~1.2}).
However, $(\cZ^-;\cD^-)\to \spec(R)$ is a $\bQ$-stable morphism with coefficients $I = \{\frac{1}{2}, 1\}$ (see Proposition \ref{prop:flat:limit:is:S1}), so in particular the flat limit of $\cD_\eta$ in $\cZ_0^-$ does \emph{not} have an embedded point on the special fiber.

\subsection{$\bQ$-stable morphisms with constant part}\label{sec_constant_part}
For proving that $\sF_{n,p,I}$ is bounded, it will be useful to introduce the following definition.

\begin{Def}
Assume $S$ is reduced.
A \emph{locally weak $\bQ$-stable morphism with constant part} $f \colon (\cX,\Omega;\cD) \rar S$ over $S$ and with coefficients in $I$ is the datum of a proper morphism $f \colon \cX \rar S$, a closed subscheme $\cD$ on $\cX$, and an effective $\qq$-divisor $\Omega$ on $X$, such that the following conditions hold:
\begin{itemize}
    \item $f \colon (X,\Omega) \rar S$ is a proper, locally stable family of pairs, where $\Omega$ is a family of Mumford divisors \cite{kol19s};
    \item $\cD \rar S$ is flat and relatively $S_1$ (namely, with no embedded points) with fibers of pure dimension $n-1$; and
    \item for every $s \in S$, there is a $\bQ$-pair $(X,\Delta;D)$  with coefficients in $I$ (i.e., both $\Delta$ and $D$ have coefficients in $I$) such that $X=\cX_s$, $\Delta=\Omega_s$, and $D^{sc}=\cD_s$.
\end{itemize}
Furthermore, if we have a polynomial $p(t) \in \qq[t]$ and for every $s \in S$, $(\cX_s,\Omega_s;\frac{1}{r}\cD_s)$ has polynomial $p(t)$, we say that it is a
locally weak $\bQ$-stable morphism with constant part {\it with polynomial $p(t)$}.
Lastly, if $\K \mathcal{X}/S. + \frac{1}{r}\mathcal{D}+\Omega$ is relatively semi-ample, we drop the ``weak'' from the notation.
\end{Def}
We remark that we call $\Omega$ the constant part since, contrary to $\cD$, the divisor $\Omega$ might not be $\bQ$-Cartier on $X$.

\begin{Notation}
We say that a locally (weak) $\bQ$-stable morphism with constant part (with coefficients in $I$ and polynomial $p(t)$) is a \emph{(weak) $\bQ$-stable morphism with constant part} (\emph{with coefficients in $I$ and polynomial $p(t)$}) if, for every $s \in S$, the fiber over $s$ is a $\bQ$-stable pair.
\end{Notation}

\begin{Prop} \label{prop_well_defined_p-pairs}
Let us fix a set of coefficients $I \subset (0,1] \cap \qq$, a polynomial $p$ and an integer $n$.
Let $r$ denote the index of $I$.
Let $f \colon (\cX,\Omega;\cD) \rar S$ be a weak $\bQ$-stable morphism with constant part $\Omega$ over a reduced base $S$.
Then $f \colon (\cX, \frac{1}{r}\cD+\Omega) \rar S$ is a well defined family of pairs.
\end{Prop}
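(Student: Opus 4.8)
The plan is to verify the three (or four) bullet points in the definition of a (well defined) family of pairs for $f \colon (\cX, \frac{1}{r}\cD+\Omega) \rar S$, using the hypotheses packaged in the notion of $p$-stable morphism with constant part. First I would observe that $f \colon \cX \rar S$ is flat with reduced fibers of pure dimension $n$: flatness and properness are part of the data, the fibers are reduced since each $\cX_s$ underlies a semi-log canonical pair (indeed $(\cX_s, \Omega_s)$ is locally stable, hence slc, hence $\cX_s$ is demi-normal and in particular reduced), and purity of dimension $n$ follows from the assumption on the relative dimension together with the fact that each fiber is the underlying space of a $p$-pair of dimension $n$. Next, the fibers of $\Supp(\frac{1}{r}\cD+\Omega) \rar S$: the $\Omega$-part comes from the locally stable family $(\cX,\Omega)\rar S$, whose boundary has fibers of pure dimension $n-1$ (or empty) by the definition of a family of pairs; the $\cD$-part is flat over $S$ with fibers of pure dimension $n-1$ by hypothesis, and taking supports does not change dimension. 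The union still has pure dimension $n-1$ fiberwise.

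The slightly more delicate bullet is the third one: $f$ is smooth at the generic point of each irreducible component of $\cX_s \cap \Supp(\frac{1}{r}\cD+\Omega)$. Here I would argue that at such a generic point $\xi$, the fiber $\cX_s$ is a normal variety (it is slc, hence regular in codimension $1$; and $\xi$ has codimension $1$ in $\cX_s$, but one must also rule out $\xi$ lying on the conductor). For the conductor issue: since $\frac{1}{r}\cD_s + \Omega_s$ is the boundary of the $p$-pair $(\cX_s, \Omega_s; \frac{1}{r}\cD_s)$, and in an slc pair $(\cX_s, \Delta)$ the divisor $\Delta$ shares no components with the conductor (the coefficients along the conductor of $K+\Delta$ are forced to be $1$ coming from $K$ alone, and $\Delta$ is supported away from it — this is part of the standard formalism of \cite{Kol13}), the component through $\xi$ is not a conductor component. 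Hence $\cX_s$ is regular (smooth) at $\xi$, and then smoothness of $f$ at $\xi$ follows from flatness of $f$ together with smoothness of the fiber at that point (a flat morphism with a fiber regular at a point is smooth there, since the fibers are equidimensional). This uses only the slc hypothesis on the fibers and the structure of a family of pairs, both of which are given.

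Finally, for well-definedness I must show that $mD$ (where $m$ clears denominators of $\frac{1}{r}\cD+\Omega$, so $m$ a multiple of $r$ clears $\frac1r\cD$, and $m$ also clears $\Omega$) is Cartier locally around the generic point of each irreducible component of $\cX_s \cap \Supp(\frac{1}{r}\cD+\Omega)$ for every $s$. Near such a generic point $\xi$ we have just shown $f$ is smooth and $\cX_s$ regular, so a neighborhood of $\xi$ in $\cX$ is regular; on a regular scheme every Weil divisor is Cartier, and it remains to know that $\frac1r\cD$ and $\Omega$ are genuine Weil divisors there rather than merely subschemes — for $\Omega$ this is immediate, for $\cD$ note that $\cD$ is flat and relatively $S_1$, so $\cD_s$ has no embedded points and defines a Weil divisor, and by the same Koll\'ar-condition / big-open-set argument used in \S\ref{divisorial sheaves} (cf. \cite{HK04}) the total space $\cD$ agrees, near $\xi$, with the closure of its restriction to the smooth locus, hence is a Cartier divisor there. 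Therefore $m(\frac1r\cD+\Omega)$ is Cartier in a neighborhood of $\xi$. The main obstacle I anticipate is the third bullet — specifically the clean verification that no component of $\cD_s$ or $\Omega_s$ runs along the conductor of $\cX_s$, so that regularity in codimension $1$ can be upgraded to actual smoothness at the relevant generic points; once that is in hand, everything else is a direct unwinding of definitions and a citation to the theory of divisorial sheaves in \S\ref{divisorial sheaves}.
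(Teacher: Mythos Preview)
Your treatment of the first three bullets is correct and matches the paper's argument: fiber dimensions come straight from the hypotheses, and smoothness of $f$ at the generic points of $\cX_s\cap\Supp(\frac{1}{r}\cD+\Omega)$ follows because the slc condition on $(\cX_s,\frac{1}{r}\cD_s+\Omega_s)$ forces those components off the conductor, so the fiber is regular there and flatness upgrades this to smoothness of $f$.

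There is a genuine gap in your well-definedness step. You assert that ``a neighborhood of $\xi$ in $\cX$ is regular'' because $f$ is smooth at $\xi$; but smoothness of $f$ only yields regularity of $\cX$ when $S$ is regular at $f(\xi)$, and here $S$ is only assumed reduced. (Take $S$ a nodal curve and $\cX=\mathbb{A}^1\times S$: the projection is smooth everywhere yet $\cX$ is singular over the node.) So the implication ``regular $\Rightarrow$ Weil $=$ Cartier'' is not available on $\cX$. Your fallback sentence, that $\cD$ ``agrees near $\xi$ with the closure of its restriction to the smooth locus, hence is a Cartier divisor there,'' does not rescue the argument either: being the closure of something Cartier does not make a subscheme Cartier.

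The paper closes this with a different and robust tool. After observing that $\cD_s$ is Cartier on the smooth locus $V_s\subset\cX_s$ (which contains every generic point of $\cD_s$), it invokes \cite{stacks-project}*{Tag 062Y}: a closed subscheme of a flat family that is itself flat over the base and restricts to an effective Cartier divisor on each fiber is an effective Cartier divisor on the total space. Since $\cD\to S$ is flat by hypothesis, this gives $\cD$ Cartier on the big open set $V\subset\cX$ whose fiberwise complement has codimension $\geq 2$, with no regularity assumption on $S$ or $\cX$.
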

\begin{proof}
By definition $\cD \rar S$ is flat with fibers of pure dimension $n-1$.
Thus, it follows that the fibers of $\mathrm{Supp}(\cD + \Omega) \rar S$ are either empty or of pure dimension $n-1$.
Thus, to show that $f \colon (\cX, \frac{1}{r}\cD + \Omega) \rar S$ is a family of pairs, we are left with showing that $f$ is smooth at the generic points of $X_s \cap \supp(\cD + \Omega)$ for every $s \in S$.

By assumption, this is the case for all the generic points of $X_s \cap \supp(\cD + \Omega)$ arising from $\Omega$.
Thus, we may focus on the contribution of $\cD$.
But then, since each fiber $(\cX_s,\Omega_s;\cD_s)$ is a $\bQ$-pair, it follows that the generic points of $\cD_s$ are contained in the smooth locus of $X_s$ by the semi-log canonical condition.
Thus, $f \colon (\cX, \frac{1}{r}\cD + \Omega) \rar S$ is a family of pairs.

To conclude, we need to show that the family is well defined.
To this end, it suffices we focus on $\cD$, as $\Omega$ satisfied the needed conditions by definition.
As argued in \cite{kol_new}*{Definition 3.35}, there is a big open subset $U \subset \cX$ such that every point of $U$ is either smooth or nodal, and $U \cap \cX_s$ has codimension at least 2 in $\cX_s$ for every $s \in S$.
For every $s \in S$, $\cD_s$ does not contain any irreducible component of the double locus.
Thus, the intersection between $\cD$ and $U$ has codimension at least 2 in every fiber.
Let $V$ denote the open set obtained by removing this intersection from $U$.
Then, as the scheme theoretic restriction $\cD_s$ is an integral Weil divisor, it is a Carter divisor along $V_s$.
Then, by \cite{stacks-project}*{Tag 062Y}, $\cD$ is a Cartier divisor along $V$.
Thus, the claim follows.
\end{proof}

\begin{Prop} \label{prop_stable_family_p-pairs}
Fix an integer $n\in \mathbb{N}$, a polynomial $p(t)\in \mathbb{Q}[t]$, and a finite subset $I\subseteq (0,1] \cap \mathbb{Q}$.
Let $\epsilon_0>0$ be as in Lemma \ref{lemma_bound_for_epsilon}, and let $f \colon (\cX,\Omega;\cD) \rar S$ be a weak $\bQ$-stable morphism with constant part, coefficients in $I$, and polynomial $p(t)$, over a reduced base $S$.
Then $f \colon (\cX, \frac{1-\epsilon}{r}\cD + \Omega) \rar S$ is a stable family of pairs for every $0 < \epsilon < \epsilon_0$ rational.
In particular, $\cD$ is $\qq$-Cartier.
\end{Prop}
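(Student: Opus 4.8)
The plan is to verify directly the four conditions defining a stable family of pairs for $f \colon (\cX, \frac{1-\epsilon}{r}\cD + \Omega) \rar S$: that it is (a) a well-defined family of pairs, (b) proper, (c) locally stable, and (d) that $\K \cX/S. + \frac{1-\epsilon}{r}\cD + \Omega$ is $f$-ample. Condition (b) is part of the hypothesis. For (a) the proof of Proposition \ref{prop_well_defined_p-pairs} applies verbatim, since rescaling the coefficient of $\cD$ from $\frac{1}{r}$ to $\frac{1-\epsilon}{r}$ changes neither $\Supp(\cD + \Omega)$ nor the locus along which $\cD$ is non-Cartier.

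Next I would upgrade each fiber from a $p$-pair to a $p$-\emph{stable} pair. For $s \in S$, the datum $(\cX_s, \Omega_s; \frac{1}{r}\cD_s)$ is a $p$-pair by hypothesis (so $\frac{1}{r}\cD_s$ is $\qq$-Cartier, by Definition \ref{def_p-pair}), and $\K \cX_s. + \frac{1}{r}\cD_s + \Omega_s$ is ample, being the restriction to $\cX_s$ of the $f$-ample divisor $\K \cX/S. + \frac{1}{r}\cD + \Omega$. Since ampleness is an open condition and decreasing the coefficient of the $\qq$-Cartier divisor $\frac{1}{r}\cD_s$ in the slc pair $(\cX_s, \frac{1}{r}\cD_s + \Omega_s)$ keeps it semi-log canonical (log discrepancies only increase), $(\cX_s, \frac{1-\delta}{r}\cD_s + \Omega_s)$ is a stable pair for $0 < \delta \ll 1$; hence $(\cX_s, \Omega_s; \frac{1}{r}\cD_s)$ is $p$-stable. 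Setting $v \coloneqq p(1)$, Lemma \ref{lemma_bound_for_epsilon} — the only place where assumption (A) enters — then gives, for every $0 < \epsilon < \epsilon_0$ and every $s \in S$ (with $\epsilon_0$ now independent of $s$), that $(\cX_s, \frac{1-\epsilon}{r}\cD_s + \Omega_s)$ is a stable pair; in particular every fiber of $f$ is semi-log canonical and $\K \cX_s. + \frac{1-\epsilon}{r}\cD_s + \Omega_s$ is ample.

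It remains to globalize. The key point, and the main obstacle, is to show that $\cD$ is $\qq$-Cartier on $\cX$; once this is known, $\K \cX/S. + \frac{1-\epsilon}{r}\cD + \Omega$ is $\qq$-Cartier as well. For this I would use the identity
\[
\frac{1}{r}\cD = \left( \K \cX/S. + \frac{1}{r}\cD + \Omega \right) - \left( \K \cX/S. + \Omega \right):
\]
the first summand is $\qq$-Cartier since it is $f$-ample by hypothesis, and the second is $\qq$-Cartier since $f \colon (\cX, \Omega) \rar S$ is a locally stable family of pairs (\cite{kol1}*{Chapter 4}), whence $\cD$ is $\qq$-Cartier. (Alternatively, one could bound the Cartier indices of the stable fibers uniformly via \cite{HMX} and then descend $\qq$-Cartierness, but this requires care with the base-change behavior of the reflexive powers $\cI_\cD^{[m]}$.) Condition (d) then follows, as a $\qq$-Cartier divisor that restricts to an ample divisor on every fiber of a proper morphism is relatively ample (\cite{stacks-project}). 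For (c), $f \colon (\cX, \frac{1-\epsilon}{r}\cD + \Omega) \rar S$ is now a well-defined family of pairs with $\qq$-Cartier relative log canonical divisor and semi-log canonical fibers, hence locally stable by the fiberwise criterion over a reduced base (\cite{kol1}, \cite{kol19s}); concretely, for a base change $C \rar S$ from the spectrum of a DVR with closed point $0$, one checks that $(\cX_C, \frac{1-\epsilon}{r}\cD_C + \Omega_C + \cX_0)$ is semi-log canonical via inversion of adjunction along the Cartier divisor $\cX_0$ — whose scheme-theoretic restriction is the stable, hence slc, fiber over the image of $0$ — together with slc-ness of the generic fiber. This proves (a)--(d), and the displayed identity yields the final assertion that $\cD$ is $\qq$-Cartier.
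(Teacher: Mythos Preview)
Your argument hinges on the identity $\frac{1}{r}\cD = (K_{\cX/S} + \frac{1}{r}\cD + \Omega) - (K_{\cX/S} + \Omega)$, where you take the first summand to be $\qq$-Cartier ``since it is $f$-ample by hypothesis.'' This is a literal reading of the Notation preceding Proposition \ref{prop_well_defined_p-pairs}, but it is not the hypothesis the paper actually works with: in Step 8 of Proposition \ref{properness_lc_case}, where a $p$-stable morphism with constant part is \emph{produced}, the divisor $K_{\cX} + \frac{1}{r}\cD + \Delta$ is only relatively nef (it is the pull-back from the ample model), and all that is verified there is that each fiber is a $p$-stable pair. So global $\qq$-Cartierness of $K_{\cX/S} + \frac{1}{r}\cD + \Omega$ is not part of the input; it is precisely what the proposition must establish. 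Under the intended (fiberwise) hypothesis your identity argument is circular.

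The paper fills this step differently. Since the fiberwise top self-intersection $(K_{\cX_s} + \frac{1-\epsilon}{r}\cD_s + \Omega_s)^n$ equals $p(1-\epsilon)$ and is therefore constant on $S$, and the divisor is $\qq$-Cartier on a fiberwise-big open set (from well-definedness), \cite{kol1}*{Theorem 5.7} yields global $\qq$-Cartierness of $K_{\cX/S} + \frac{1-\epsilon}{r}\cD + \Omega$; stability then follows from \cite{kol1}*{Theorem-Definition 4.45}, and varying $\epsilon$ gives $\qq$-Cartierness of $\cD$. This is where the polynomial $p(t)$ is actually used --- your argument never touches it. The remainder of your outline (well-definedness via Proposition \ref{prop_well_defined_p-pairs}, the fiberwise input from Lemma \ref{lemma_bound_for_epsilon}, and the passage from fiberwise to relative ampleness and to local stability) is fine once the $\qq$-Cartier step is in place.
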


\begin{proof}
Fix $\epsilon$ as in the statement.
Then, by Proposition \ref{prop_well_defined_p-pairs}, $f \colon (\cX, \frac{1-\epsilon}{r}\cD + \Omega) \rar S$ is a well defined family of pairs.
By assumption, the self-intersection $(\K \cX_s. + \frac{1-\epsilon}{r}\cD_s + \Omega_s)^n$ is independent of $s \in S$, as it is $p(1-\epsilon)$.
Since $f \colon (\cX, \frac{1-\epsilon}{r}\cD + \Omega) \rar S$ is a well defined family of pairs, we may find a big open subset $U \subset \cX$ such that $U_s$ has codimension at least 2 for every $s \in S$ and $\K \cX/S. + \frac{1-\epsilon}{r}\cD + \Omega$ is $\qq$-Cartier along $U$.
Then, $\K \cX/S. + \frac{1-\epsilon}{r}\cD + \Omega$ is $\qq$-Cartier by \cite{kol_new}*{Theorem 5.8}.
The claim follows by \cite{kol_new}*{Definition-Theorem 4.7} and the fact that the argument was independent of $\epsilon \in (0,\epsilon_0) \cap \qq$.
\end{proof}

\subsection{Existence of good minimal models}
Let $(X,\Delta)$ be a log canonical pair, and let $f \colon X \rar S$ be a projective morphism over a normal variety $S$ such that $\K X. + \Delta$ is $f$-pseudo-effective.
Then, it is expected that $X$ admits a good minimal model over $S$.
That is, $X$ admits a birational contraction $\phi \colon (X,\Delta) \drar (X',\Delta')$ over $S$ to a log canonical pair $(X',\Delta')$, such that $\Delta'$ is the push-forward of $\Delta$ to $X'$, $\phi$ is $(K_X+\Delta)$-negative, and $\K X'. + \Delta'$ is semi-ample over $S$.

Here, we collect a technical statement that shows the existence of relative good minimal models under certain assumptions.
In particular, this statement is crucial to show that, under suitable hypotheses, a weak $\bQ$-stable morphism (resp. weak family of $\bQ$-stable pairs) is actually a $\bQ$-stable morphism (resp. family of $\bQ$-stable pairs).

\begin{Lemma} \label{lemma minimal models}
Let $(X,\Delta)$ be a log canonical pair, and let $f \colon X \rar S$ be a projective morphism to a normal variety such that $\K X. + \Delta$ is $f$-pseudo-effective.
Assume that the general fiber of $f$ has a good minimal model.
Then,
$(X,\Delta)$ admits a relative good minimal model over $S$.
Furthermore, if $\K X. + \Delta$ is nef over $S$, then it is semi-ample.
\end{Lemma}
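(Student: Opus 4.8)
The plan is to first reduce to a $\mathbb{Q}$-factorial dlt pair by extracting log canonical places, then establish the existence of a relative good minimal model by passing to the generic fibre and spreading out, and finally deduce the ``furthermore'' from the first part by a negativity-lemma computation. So, first I would take a $\mathbb{Q}$-factorial dlt modification $h\colon(Y,\Delta_Y)\to(X,\Delta)$, so that $K_Y+\Delta_Y=h^\ast(K_X+\Delta)$ --- in particular $K_Y+\Delta_Y$ is pseudo-effective over $S$ --- and $h$ extracts only log canonical places of $(X,\Delta)$. Since the centre of every $h$-exceptional divisor is a log canonical centre of $(X,\Delta)$ and, more generally, every log canonical centre of $(Y,\Delta_Y)$ maps onto one of $(X,\Delta)$, all log canonical centres of $(Y,\Delta_Y)$ dominate $S$. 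As a relative good minimal model of $(Y,\Delta_Y)$ over $S$ is, by definition, a relative good minimal model of $(X,\Delta)$ after extraction of log canonical places, I may replace $(X,\Delta)$ by $(Y,\Delta_Y)$ and assume $(X,\Delta)$ is $\mathbb{Q}$-factorial dlt.

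\emph{Existence of the good minimal model.} Let $\eta$ be the generic point of $S$. Then $(X_\eta,\Delta_\eta)$ is a projective $\mathbb{Q}$-factorial dlt pair over $k(\eta)$ of dimension at most $3$ with $K_{X_\eta}+\Delta_\eta$ pseudo-effective, so by the minimal model program together with abundance for log canonical pairs in dimension $\le 3$ (which holds over any field of characteristic zero; see \cite{KMM94}) it admits a good minimal model over $k(\eta)$. I would then deduce the existence of a relative good minimal model of $(X,\Delta)$ over $S$ from this and the hypothesis that every log canonical centre dominates $S$: concretely, one runs a $(K_X+\Delta)$-MMP over $S$ with scaling of a relatively ample divisor to reach a relative minimal model $(X^m,\Delta^m)$, using termination of the MMP on the three-dimensional generic fibre to control the process; then $K_{X^m}+\Delta^m$ --- being nef over $S$, semi-ample over $S$ on the generic fibre, and, by induction on the relative dimension (its log canonical centres are lower-dimensional $\mathbb{Q}$-factorial dlt pairs satisfying the same hypotheses), semi-ample over $S$ on each log canonical centre --- is semi-ample over $S$ by the semi-ampleness criterion for dlt pairs. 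This propagation of the good minimal model from the generic fibre to the whole family is the step I expect to be the main obstacle, and it is precisely where the hypothesis that all log canonical centres dominate $S$ is essential.

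\emph{The ``furthermore''.} Suppose now $K_X+\Delta$ is nef over $S$. By the previous steps there is a $\mathbb{Q}$-factorial dlt modification $h\colon(Y,\Delta_Y)\to(X,\Delta)$ equipped with a relative good minimal model $\phi\colon(Y,\Delta_Y)\drar(X^m,\Delta^m)$ over $S$, and $K_Y+\Delta_Y=h^\ast(K_X+\Delta)$ is nef over $S$. Choosing a common log resolution $p\colon W\to Y$ and $q\colon W\to X^m$, one writes
\[
p^\ast(K_Y+\Delta_Y)=q^\ast(K_{X^m}+\Delta^m)+E,
\]
where $E\ge 0$ and $E$ is $q$-exceptional, as is standard for a $(K_Y+\Delta_Y)$-negative birational contraction onto a minimal model. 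Every $q$-exceptional curve $C$ is contracted over $S$, so $p^\ast(K_Y+\Delta_Y)\cdot C\ge 0$ while $q^\ast(K_{X^m}+\Delta^m)\cdot C=0$; hence $E\cdot C\ge 0$, i.e.\ $E$ is $q$-nef. Applying the negativity lemma \cite{KM98}*{Lemma 3.39} to $-E$ then forces $E=0$, so $p^\ast(K_Y+\Delta_Y)=q^\ast(K_{X^m}+\Delta^m)$ is semi-ample over $S$; since $p$ and $h$ are contractions, $K_X+\Delta$ is semi-ample over $S$, as claimed.
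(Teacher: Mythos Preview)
Your reduction to a $\mathbb{Q}$-factorial dlt pair via dlt modification and your negativity-lemma argument for the ``furthermore'' are both fine; the paper does essentially the same thing (it passes to a log resolution with a slightly perturbed boundary rather than a dlt modification, and for the ``furthermore'' it cites \cite{HMX}*{Lemma 2.9.1}, whose proof is precisely your negativity computation).

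The genuine gap is in your existence step. You propose to run a $(K_X+\Delta)$-MMP with scaling over $S$ and claim that termination on the three-dimensional generic fibre ``controls the process''; but termination of the MMP on $X_\eta$ does not by itself give termination of the MMP over $S$ --- after finitely many steps the MMP becomes an isomorphism on the generic fibre, yet there is no a priori reason the remaining (vertical) steps terminate. Likewise, your appeal to a ``semi-ampleness criterion for dlt pairs'' together with an induction on the relative dimension of the log canonical centres is too vague to stand on its own: the centres of a dlt pair need not be $\mathbb{Q}$-factorial, the induction needs a precise formulation, and the criterion you have in mind must be named. What you are sketching is essentially the content of \cite{HX13}*{Theorem 1.1}, which is a substantial result and not something to reprove inline. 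The paper's argument is exactly to invoke that theorem: once one knows a good minimal model exists over a non-empty open $U\subseteq S$ (via \cite{KMM94} and \cite{HMX18}*{Theorem 1.9.1} applied to the generic fibre), the hypothesis that every log canonical centre dominates $S$ is precisely what allows \cite{HX13}*{Theorem 1.1} to extend this to a good minimal model over all of $S$. You should replace your sketch of the spreading-out step with a direct citation of \cite{HX13}*{Theorem 1.1}; you have correctly identified that this is where the horizontality hypothesis is used.
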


\begin{proof}
Let $\pi \colon X' \rar X$ be a log resolution of $(X,\Delta)$, and let $\pi^*(K_X+\Delta)= \K X'. + \Delta' + E' + \Gamma' - F'$.
Here $\Delta'$ denotes the strict transform of $\Delta$, the divisors $E'$, $\Gamma'$, and $F'$ are effective, $\pi$-exceptional, and share no common components.
Furthermore, $\Gamma'$ is reduced, while the coefficients of $E'$ are in $(0,1)$.
Let $\Xi'$ denote the reduced $\pi$-exceptional divisor, and fix a rational number $0 < \epsilon \ll 1$.
Then, $(X',\Delta' + E' + \Gamma' + \epsilon (\Xi' - \Gamma'))$ is dlt, and it has the same pluricanonical ring as $(X,\Delta)$.
Furthermore, by the addition of $\epsilon (\Xi' - \Gamma')$, every $\pi$-exceptional divisor that is not in $\Gamma$ is in the relative stable base locus of $\K X'. +\Delta' + E' + \Gamma' + \epsilon (\Xi' - \Gamma')$.
Finally, by assumption and the choice of $\epsilon$, every log canonical center of $(X',\Delta' + E' + \Gamma' + \epsilon (\Xi' - \Gamma'))$ dominates $S$.

By assumption, the general fiber has a good minimal model.
Thus, by \cite{HMX18}*{Theorem 1.9.1}, it follows that $(X',\Delta' + E' + \Gamma' + \epsilon (\Xi' - \Gamma'))$ has a relative good minimal model over a non-empty smooth affine open subset $U \subseteq S$.
Then, as by assumption there are no vertical log canonical centers, it follows from \cite{HX13}*{Theorem 1.1} that $(X',\Delta' + E' + \Gamma' + \epsilon (\Xi' - \Gamma'))$ has a relative good minimal model over $S$.
Since every $\pi$-exceptional divisor that is not in $\Gamma'$ is in the relative stable base locus, any such divisor is contracted on the minimal model.
This shows that the achieved model is a relative good minimal model of $(X,\Delta)$ over $S$ in the sense of Birkar--Shokurov (see \cite{LT22}*{Definition 2.8}); that it, it is a good minimal model where we allow to extract some log canonical places.
But then, by \cite{LT22}*{Lemma 2.9}, the existence of such model also implies the existence of a minimal model in the usual sense.
In turn, this latter model is also good by \cite{HMX}*{Lemma 2.9.1}, and the first part of the claim follows.

Now, assume that $K_X + \Delta$ is relatively nef.
Then, $(X,\Delta)$ is a relative weak log canonical model for $(X',\Delta' + E' + \Gamma' + \epsilon (\Xi' - \Gamma'))$ in the sense of \cite{HMX}.
Then, we conclude by \cite{HMX}*{Lemma 2.9.1} that $(X,\Delta)$ is a relatively semi-ample model.

In the course of the proof, we used \cite{HMX}*{Lemma 2.9.1} in the relative setting.
Notice that \cite{HMX}*{Lemma 2.9.1} is phrased for projective pairs.
On the other hand, by \cite{HX13}*{Corollary 1.2}, one can first take a projective closure of $\tilde X$ over a compactification $\overline{S}$ of $S$, and take a projective relative good minimal model.
Then, by adding the pull-back of some divisor on $\overline{S}$, we can regard the relative good minimal model as a projective minimal model.
Thus, it follows that $K_{X} + \Delta$ is relatively semi-ample over $S$, and it defines a morphism to the relative canonical model.
\end{proof}

\section{Boundedness}\label{section_boundedness}
The goal of this section is to prove that, if we fix a set of coefficients $I$, a polynomial $p$ and a dimension $n$, the corresponding set of $\bQ$-stable pairs is effectively log-bounded.

\begin{Prop}\label{prop:nef:open}
Fix an integer $n\in \mathbb{N}$.
Consider a locally stable family of relative dimension $n$ $(\cX,\cD)\to B$ over a reduced scheme $B$, with $\cD$ being $\bQ$-Cartier.
Assume that there is an $0<\epsilon_0 < 1$ such that  $(\cX,(1-\epsilon_0)\cD)\to B$ is stable.
Then, the set of points $b\in B$ such that $K_{\cX_b} + \cD_b$ is semi-ample is constructible.
\end{Prop}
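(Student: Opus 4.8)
The plan is to reduce nefness of $\K \cX/B.+\cD$ on a fibre to ampleness, on the same fibre, of $\K \cX/B.+c^*\cD$ for a \emph{single} rational number $c^*<1$ chosen uniformly over the base. Granting such a $c^*$, the argument concludes at once: $\cD$ is $\bQ$-Cartier by hypothesis, and $\K \cX/B.+(1-\epsilon_0)\cD$ is $\bQ$-Cartier as part of the datum of a stable family, so $\K \cX/B.+c^*\cD$ is $\bQ$-Cartier on $\cX$; hence some multiple $L:=N(\K \cX/B.+c^*\cD)$ is a line bundle, and $\{p\in B:L|_{\cX_p}\text{ ample}\}$ is open because ampleness of a line bundle is an open condition in a proper flat family. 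Since the assertion is local on $B$, I would take $B$ reduced, Noetherian and quasi-compact, and I would also assume $\epsilon_0\in\qq$, which is harmless: by the boundedness recalled below the class $\K \cX_p.+(1-\epsilon)\cD_p$ stays ample for all $p$ and all $\epsilon$ near $\epsilon_0$, so $\epsilon_0$ can be moved to a rational value.

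First I would record the shape of the ample and nef ranges along one fibre. Writing $\cD_p:=\cD|_{\cX_p}$, the set $\{c\in\rr:\K \cX_p.+c\cD_p\text{ is ample}\}$ is a nonempty open interval $(\alpha_p,\beta_p)$ with $\alpha_p<1-\epsilon_0<\beta_p\le+\infty$: it is nonempty since $\K \cX_p.+(1-\epsilon_0)\cD_p$ is ample, it is an interval since ample $\bQ$-divisor classes form a convex cone, and it is open since ampleness is an open condition on divisor classes. Using that an ample plus a nef divisor is ample, one checks that $\K \cX_p.+c\cD_p$ is nef precisely for $c\in[\alpha_p,\beta_p]$; in particular $(\K \cX/B.+\cD)|_{\cX_p}$ is nef if and only if $\beta_p\ge 1$, and whenever it is nef, $\K \cX_p.+c\cD_p$ is ample for all $c\in(1-\epsilon_0,1)$.

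The crux — and the step I expect to be the main obstacle — is to bound $\beta_p$ away from $1$, uniformly in $p$, whenever $(\K \cX/B.+\cD)|_{\cX_p}$ is \emph{not} nef. The fibres $(\cX_p,(1-\epsilon_0)\cD_p)$ are stable pairs of dimension $n$; the volume $(\K \cX_p.+(1-\epsilon_0)\cD_p)^n$ and the coefficients of $\cD_p$ are locally constant in $p$, so take finitely many values over the quasi-compact $B$. By boundedness of stable pairs \cite{HMX} there is $M\in\bN$, independent of $p$, with $M\K \cX_p.$, $M\cD_p$ and $M(\K \cX_p.+(1-\epsilon_0)\cD_p)$ Cartier for every $p$; enlarging $M$, I may also assume $M>1/\epsilon_0$ and that $M$ is an admissible denominator bound in the rationality theorem \cite{KM98}*{Theorem 3.5} (in its log canonical version, applied on the normalisation of $\cX_p$) for $\bQ$-divisors of these Cartier indices in dimension $n$. (Assumption (A) is available throughout, and enters — as elsewhere in this section — to provide the ample models underlying these boundedness statements.) Now if $(\K \cX/B.+\cD)|_{\cX_p}$ is not nef, then $\beta_p<1$ and $\K \cX_p.+\beta_p\cD_p$ is nef but not ample, so $\beta_p=\max\{c:\K \cX_p.+c\cD_p\text{ nef}\}$; pivoting the rationality theorem around the ample Cartier divisor $M(\K \cX_p.+(1-\epsilon_0)\cD_p)$ and the log canonical divisor $\K \cX_p.+\cD_p$ gives $\beta_p\in\frac1M\zz$, whence $\beta_p\le\frac{M-1}{M}$ because $\beta_p>1-\epsilon_0>1-\frac1M$.

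Finally I would fix any rational $c^*\in(\frac{M-1}{M},1)$; note $c^*>1-\epsilon_0$ since $M>1/\epsilon_0$. By the second paragraph, if $(\K \cX/B.+\cD)|_{\cX_p}$ is nef then $\K \cX_p.+c^*\cD_p$ is ample, while if it is not nef then $\beta_p\le\frac{M-1}{M}<c^*$, so $c^*\notin(\alpha_p,\beta_p)$ and $\K \cX_p.+c^*\cD_p$ is not ample. Hence the nef locus of $\K \cX/B.+\cD$ equals the (open) locus where $\K \cX_p.+c^*\cD_p$ is ample, as desired. I note that Lemma \ref{lemma_nef_generalizes} already shows this locus is stable under generalisation; the extra content here is its constructibility, which the boundedness step makes available by identifying the nef locus with an ample locus.
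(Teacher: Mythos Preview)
Your strategy—to identify the nef locus of $K_{\cX/B}+\cD$ with the ample locus for a single uniformly chosen coefficient $c^*<1$—is appealing, but the crucial step does not go through. The rationality theorem, applied with $H=M(K_{\cX_p}+(1-\epsilon_0)\cD_p)$ and the log canonical pair $(\cX_p,\cD_p)$, bounds only the \emph{denominator} of the threshold $r=\max\{t:H+t(K_{\cX_p}+\cD_p)\text{ nef}\}$; it gives no upper bound on $r$ itself. Since $\beta_p=\frac{M(1-\epsilon_0)+r}{M+r}\to 1$ as $r\to\infty$, nothing here keeps $\beta_p$ away from $1$, and there is no reason that $\beta_p\in\frac{1}{M}\zz$. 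More directly: if $C$ spans the extremal ray on which $K_{\cX_p}+\beta_p\cD_p$ vanishes, then $\beta_p=-(K_{\cX_p}\cdot C)/(\cD_p\cdot C)$; even with $MK_{\cX_p}$ and $M\cD_p$ Cartier this is a ratio of two integers whose denominator $-M\cD_p\cdot C$ is unbounded, so $\beta_p$ can lie anywhere in $(1-\epsilon_0,1)\cap\qq$. (Incidentally, your clause ``$\beta_p>1-\epsilon_0>1-\frac{1}{M}$'' contradicts the assumption $M>1/\epsilon_0$, which gives $1-\epsilon_0<1-\frac{1}{M}$.)

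The paper avoids this obstacle by not attempting to bound $\beta_p$ at all. After reducing to normal fibres via stratification and simultaneous normalisation, it takes a log resolution in family and invokes \cite{HMX18}*{Theorem 1.9.1}: if one fibre has $K_{\cX_p}+\cD_p$ nef (hence, by assumption (A), this fibre is a good minimal model of its log resolution), then the resolved family admits a relative good minimal model and hence a relative ample model over that stratum; a discrepancy comparison on a common resolution then forces $K_{\cX/B}+\cD$ to be the pull-back from the ample model, so nef on every fibre of the stratum. This establishes constructibility of the nef locus, and combined with Lemma \ref{lemma_nef_generalizes} (stability under generalisation) yields openness.
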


\begin{proof}
This follows immediately from Theorem \ref{teorema kollar constructible}.
\end{proof}

\begin{theorem}\label{thm_boundedness}
Fix an integer $n\in \mathbb{N}$, a finite subset $I\subseteq (0,1] \cap \mathbb{Q}$, and a polynomial $p(t) \in \mathbb{Q}[t]$.
Then, the set of $\bQ$-stable pairs $(X,\Delta;D)$ of dimension $n$, polynomial $p(t)$ and coefficients in $I$ is effectively log bounded.
\end{theorem}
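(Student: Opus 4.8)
The plan is to reduce the statement to the boundedness of stable pairs by means of the perturbation of Lemma \ref{lemma_bound_for_epsilon}, and then to cut out, inside the resulting bounding family, the sublocus parametrizing $p$-stable pairs. I would begin by fixing two generic rational numbers $0<\epsilon<\epsilon'<\epsilon_0$, where $\epsilon_0=\epsilon_0(n,p(1))$ is the constant of Lemma \ref{lemma_bound_for_epsilon}; here ``generic'' means that neither of the maps $(a,b)\mapsto a+(1-\epsilon)b$ and $(a,b)\mapsto a+(1-\epsilon')b$ identifies two distinct elements of $(I\cup\{0\})^2$, which excludes only finitely many values since $I$ is finite. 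By Lemma \ref{lemma_bound_for_epsilon}, for every $p$-stable pair $(X,\Delta;D)$ in our set both $(X,(1-\epsilon)D+\Delta)$ and $(X,(1-\epsilon')D+\Delta)$ are stable pairs of dimension $n$, with fixed volumes $p(1-\epsilon)$ and $p(1-\epsilon')$ and coefficients in the finite sets $I_\epsilon\coloneqq\{a+(1-\epsilon)b:a,b\in I\cup\{0\}\}\cap(0,1]$ and $I_{\epsilon'}$ respectively.

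By the boundedness of stable pairs of fixed dimension and volume with coefficients in a fixed DCC set (\cite{HMX}, \cite{HMX18}), together with the standard refinement to strong log boundedness when the coefficient set is finite (so that the locally constant coefficients of the bounding boundary can be prescribed), there is a projective morphism $\pi\colon(\cX,\cB)\to T$ to a reduced base of finite type, with $\Supp(\cB)$ containing no fiber and with boundary coefficients in $I_\epsilon$, such that every $(X,(1-\epsilon)D+\Delta)$ as above is isomorphic to a fiber $(\cX_t,\cB_t)$. Because $\epsilon$ is generic, every coefficient of $\cB$ is uniquely of the form $a+(1-\epsilon)b$ with $a,b\in I\cup\{0\}$, so we may canonically split $\cB=(1-\epsilon)\cD+\Omega$ into $\qq$-divisors with coefficients in $I$; the family $(\cX,\Omega;\cD)\to T$ then realizes every $p$-stable pair in our set as a fiber with $f_*\Delta=\Omega_t$ and $f_*D=\cD_t$, which gives the strong log boundedness. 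Applying the Cartier-index bound of \cite{HMX} to the two families $\{(X,(1-\epsilon)D+\Delta)\}$ and $\{(X,(1-\epsilon')D+\Delta)\}$ produces an integer $M$ with $M(K_X+(1-\epsilon)D+\Delta)$ and $M(K_X+(1-\epsilon')D+\Delta)$ Cartier for all our pairs; since $(\epsilon'-\epsilon)D$ and $(\epsilon'-\epsilon)(K_X+\Delta)$ are $\zz$-linear combinations of these two divisors and $\epsilon'-\epsilon$ is a fixed positive rational, there is a single $m_0=m_0(n,p,I)$ such that $\cO_X(m_0D)$, $\omega_X^{[m_0]}$ and $\cO_X(m_0(K_X+D+\Delta))$ are invertible for every $p$-stable pair in the set; this is also the bound needed in Remark \ref{remark_for_m_big_enough_kollar_comndition_guarantees_cartier}.

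To upgrade strong log boundedness to effective log boundedness it remains to arrange that every fiber of the bounding family is a $p$-stable pair with polynomial $p(t)$ and coefficients in $I$. After a stratification of $T$ (and base change of the family) I may assume that $\pi$ is flat, that $\cD$ and $\Omega$ form families of divisors in the sense of \cite{kol1} and \cite{kol19s} compatible with base change, and that $\cO_{\cX/T}(m_0\cD)$ and $\omega_{\cX/T}^{[m_0]}$ are flat over $T$. Then the locus where $\cD_t$ is $\qq$-Cartier is the open locus where $\cO_{\cX/T}(m_0\cD)$ restricts to an invertible sheaf on the fiber (by the previous paragraph and \cite{stacks-project}*{Tag 00MH}); the locus where $(\cX_t,\cD_t+\Omega_t)$ is semi-log canonical is open by \cite{kol1}; and on the intersection of these two loci $m_0(K_{\cX/T}+t\cD+\Omega)$ is a genuine line bundle, so the intersection numbers $\cD_t^{\,k}\cdot(K_{\cX_t}+\Omega_t)^{n-k}$ are locally constant and the requirement that $(K_{\cX_t}+t\cD_t+\Omega_t)^{n}=p(t)$ is open and closed. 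The intersection $T^\circ$ of these conditions is constructible, and, replacing $T$ by $T^\circ$ (a finite union of locally closed subschemes), we obtain a bounding family all of whose fibers are $p$-stable pairs with polynomial $p(t)$ and coefficients in $I$ which still realizes every member of the set. Hence the set is effectively log bounded.

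The only genuinely new point beyond combining known boundedness theorems is the last step: one must verify that the conditions defining a $p$-stable pair inside the bounding family of stable pairs cut out a constructible locus, and in particular that $D$ is $\qq$-Cartier with bounded index and that the full polynomial $p(t)$ — not merely its value $p(1)$ — is the prescribed one. This is exactly what is secured by the uniform Cartier-index bound coming from the two perturbations $\epsilon,\epsilon'$ and by the functoriality of divisorial sheaves under base change, which becomes available only after everything has been reduced to honest stable pairs.
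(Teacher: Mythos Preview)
Your reduction to boundedness of stable pairs via Lemma~\ref{lemma_bound_for_epsilon} and the HMX results mirrors the paper's Steps~1--2, and the generic-$\epsilon$ trick for canonically splitting $\cB=(1-\epsilon)\cD+\Omega$ is an elegant alternative to the paper's ``take finitely many copies of the family''; the two-perturbation argument bounding the Cartier index of $D$ and of $K_X+\Delta$ separately is also a clean device that the paper does not use (it obtains $\qq$-Cartierness of $K_{\cX/B}+t\cD+\Omega$ for all $t$ via the locally-stable stratification of \cite{kol1}*{Lemma 4.48} instead).

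However, there is a genuine gap in the passage to \emph{effective} log boundedness. After restricting to the locus $T^\circ$ where $\cD_t$ is $\qq$-Cartier, $(\cX_t,\cD_t+\Omega_t)$ is slc, and $(K_{\cX_t}+t\cD_t+\Omega_t)^n=p(t)$, you assert that every fiber is a $p$-\emph{stable} pair. But you have only arranged that $(\cX_t,(1-\epsilon)\cD_t+\Omega_t)$ is stable for your one fixed $\epsilon$, whereas $p$-stability (Definition~\ref{def_p-pair}) requires $(\cX_t,(1-\epsilon')\cD_t+\Omega_t)$ to be stable for all $0<\epsilon'\le\epsilon_1$ for some $\epsilon_1>0$; equivalently, $K_{\cX_t}+\cD_t+\Omega_t$ must be nef. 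None of your listed conditions forces this: a fiber may well have $K_{\cX_t}+(1-\epsilon)\cD_t+\Omega_t$ ample while $K_{\cX_t}+\cD_t+\Omega_t$ is negative on some curve in $\supp(\cD_t)$. The slc condition and the polynomial identity are purely numerical and do not detect nefness.

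This is exactly the content the paper isolates in Proposition~\ref{prop:nef:open}: for a locally stable family with $(\cX,(1-\epsilon_0)\cD+\Omega)\to B$ stable, the locus of $b\in B$ with $K_{\cX_b}+\cD_b+\Omega_b$ nef is \emph{open}. The argument there uses assumption~(A) essentially (to produce fiberwise and then relative canonical models, via \cite{HMX18}*{Theorem 1.9.1}) and is the substantive new step in Theorem~\ref{thm_boundedness} beyond HMX boundedness. Your proof omits it entirely, so your family over $T^\circ$ may contain fibers that are $p$-pairs but not $p$-stable, and effective log boundedness is not yet established.
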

\begin{proof}
We proceed in several steps.

{\bf Step 1:} In this step, we show that the $\bQ$-stable pairs of interest are log bounded.

From Lemma \ref{lemma_bound_for_epsilon}, there is an $\epsilon_0>0$ such that, for every $\bQ$-stable pair $(X,\Delta;D)$ as in the statement, $(X,(1-\epsilon_0)D + \Delta)$ is a stable pair.
Then, from \cite{HMX}, there is a bounding family $(\cX,\mathcal{E})\to B$ of stable pairs of volume $p(1-\epsilon_0)$, coefficients in the finite set $(1-\epsilon_0)I\cup I \cup \{1\}$ and dimension $n$.

{\bf Step 2:} In this step, we show that the $\bQ$-stable of pairs of interest are strongly log bounded.
Furthermore, we may choose the family to be locally stable.

Since the set of coefficients involved is finite, up to taking finitely many copies of the family in order to assign coefficients to $\mathcal{E}$, we may find divisors $\mathcal{D}$ and $\Omega$ supported on $\mathcal{E}$ such that $(1-\epsilon_0)\mathcal{D}$ restricts to $(1-\epsilon_0)D$ fiberwise, and $\Omega$ restricts to $\Delta$ fiberwise.
By \cite{kol_new}*{Lemma 4.44}, up to replacing $B$ with a finite disjoint union of locally closed subsets, we can further assume that both $(\cX,(1-\epsilon)\cD + \Omega)\to B$ and
$(\cX,\cD + \Omega)\to B$ are locally stable.
In particular, $K_{\cX/B} + t\cD + \Omega$ is $\mathbb{Q}$-Cartier for any $t$.
Furthermore, by flatness, up to disregarding some irreducible components of $B$, we can assume that for every $b\in B,$ $p(t) = (K_{\cX/B} + t\cD + \Omega)^{\dim(X)}$.
Finally, up to stratifying $B$, we may assume that each irreducible component of $B$ is smooth;
in particular, $K_B$ is well defined, and it follows that the pairs are strongly log bounded. 

{\bf Step 3:} In this step we finish the proof.

By construction, for the choice of $t = 1-\epsilon_0$, $K_{\cX/B} + (1-\epsilon_0)\cD + \Omega$ is ample on the general fibers.
Thus, up to removing some proper closed subset of $B$, we may assume that $(\cX,(1-\epsilon_0)\cD + \Omega)\to B$ is a stable family.
Thus, to conclude the proof it suffices to use Proposition \ref{prop:nef:open}, which guarantees that the set $
\{p \in B  \colon  (K_{\cX/B} + \cD + \Omega)|_{{\cX}_p}\text{ is semi-ample}\}$ is constructible.
\end{proof}

\section{The moduli functor}
\label{section functor}

The goal of this section is to prove that $\sF_{n,p,I}$ is an algebraic stack. We begin by the following proposition:

\begin{Prop}
The fibered category $\sF_{n,p,I}$ is a stack.
\end{Prop}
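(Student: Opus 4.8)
The plan is to verify the two defining conditions of a stack for $\sF_{n,p,I}$: (a) that for every scheme $B$ and every pair of objects $\xi, \xi'$ over $B$, the presheaf $\underline{\mathrm{Isom}}_B(\xi,\xi')$ on $\mathrm{Sch}/B$ is a sheaf for the fppf (or \'etale) topology; and (b) that descent data for objects relative to an fppf cover are effective. Both verifications reduce to well-known descent statements once one records that all the data involved --- the morphisms $\cX\to B$ and $\cD\to B$, the closed embedding $\cD\hookrightarrow\cX$, and the divisorial sheaves $\cI_\cD^{[m]}$, $\omega_{\cX/B}^{[m]}$ appearing in Koll\'ar's condition --- are of a nature that descends.

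For condition (a), I would argue as follows. An isomorphism of two families $(\cX;\cD)\to B$ and $(\cX';\cD')\to B$ is the same as an isomorphism $\cX\xrightarrow{\sim}\cX'$ over $B$ carrying $\cD$ isomorphically onto $\cD'$; equivalently, by flatness and properness, an isomorphism of the schemes $\cX, \cX'$ over $B$ respecting the ideal sheaves $\cI_\cD, \cI_{\cD'}$. The functor $T\mapsto \mathrm{Isom}_T(\cX_T,\cX'_T)$ is representable by a scheme (since $\cX\to B$ is proper and flat, by standard Hilbert-scheme type arguments, e.g. \cite{stacks-project}), hence in particular a sheaf; imposing compatibility with the closed subschemes $\cD, \cD'$ is a closed condition and preserves the sheaf property. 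So $\underline{\mathrm{Isom}}_B(\xi,\xi')$ is a sheaf. (If one prefers to avoid representability, one checks the sheaf axiom by hand: morphisms of schemes glue in the fppf topology, and the equalities $h(\cD) = \cD'$, $h^*\cO_{\cX'}=\cO_\cX$ etc. are fppf-local on the base.)

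For condition (b), let $\{B_i\to B\}$ be an fppf cover and suppose we are given objects $\xi_i = ((\cX_i;\cD_i)\to B_i)$ together with descent isomorphisms over the overlaps satisfying the cocycle condition. First descend the morphism $f_i\colon \cX_i\to B_i$: since each is flat and proper, hence (after replacing by an affine cover of $\cX_i$, or directly) quasi-affine over suitable opens, fppf descent for quasi-coherent sheaves of algebras gives a flat proper $f\colon\cX\to B$ restricting to the $f_i$. Likewise descend $\cD_i\to B_i$ and the closed embeddings $\cD_i\hookrightarrow\cX_i$ (closed subschemes descend). It remains to check that the descended data $(\cX;\cD)\to B$ lies in $\sF_{n,p,I}$, i.e. satisfies the four bullet points of Definition \ref{Def:functor} and Koll\'ar's conditions (1) and (2). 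Flatness, properness, relative dimension, and relative $S_1$-ness of $\cD\to B$ are all fppf-local on $B$, as is the condition that each geometric fiber $(\cX_b;\frac1r\cD_b)$ is $p$-stable (a fiber of $\cX\to B$ over $b\in B$ is, after base change, a fiber of one of the $\cX_i\to B_i$). Finally, conditions (1) and (2) of Definition \ref{Def:functor} are stated for all base changes $B'\to B$; since a base change of $B$ followed by the cover $B_i\to B$ is a base change of $B_i$, and the formation of $\cI_\cD^{[m]}$ and $\omega_{\cX/B}^{[m]}$ commutes with the faithfully flat maps $B_i\to B$ (this is exactly the content of \cite{AH11}*{Proposition 5.1.4} / the fact that these sheaves commute with arbitrary base change, cf. Remark \ref{remark:bc:implies:flat}), the required isomorphisms for $B'\to B$ follow from those over the $B_i$ by faithfully flat descent of the isomorphisms of sheaves.

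The main obstacle is the last point: checking that Koll\'ar's conditions (1) and (2) descend. The subtlety is that $\cI_\cD^{[m]}$ and $\omega_{\cX/B}^{[m]}$ are defined by a reflexive-hull/pushforward-from-a-big-open construction, so one must confirm that this construction is compatible with the faithfully flat base change $\sqcup B_i\to B$ --- equivalently, that the big open subsets $V\subseteq\cX$ where $\cD$ is Cartier and $f$ is Gorenstein can be chosen compatibly and that $i_*$ commutes with flat base change on $B$. This is where one invokes \cite{HK04}*{Corollary 3.7} and \cite{AH11}*{Proposition 5.1.4}: flatness of the relevant sheaves over $B$ (Remark \ref{remark:bc:implies:flat}) together with the fact that faithfully flat descent identifies a sheaf on $\cX$ with a sheaf on $\sqcup\cX_i$ equipped with gluing data, and an isomorphism of quasi-coherent sheaves on $\cX$ may be checked after pulling back to $\sqcup\cX_i$. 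Everything else in the proof is a routine invocation of fppf descent for schemes, morphisms, and closed subschemes.
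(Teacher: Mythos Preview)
Your verification of the Isom-sheaf condition is fine, and indeed matches the paper's brief treatment. The gap is in your argument for effectivity of descent data. You write that ``fppf descent for quasi-coherent sheaves of algebras gives a flat proper $f\colon\cX\to B$'', but $\cX\to B$ is proper and \emph{not} affine, so descent of algebras does not apply. Descent of schemes along an fppf (or even \'etale) cover is not effective in general: the glued object exists as an algebraic space, but need not be a scheme. Your parenthetical ``after replacing by an affine cover of $\cX_i$, or directly'' does not help, since affine opens of the $\cX_i$ are not canonical and do not carry the descent data. Since $\sF_{n,p,I}$ is a category fibered over $\mathrm{Sch}/k$, you must produce a scheme $\cX$ over $B$, and nothing in your argument guarantees this.

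The paper's proof avoids this problem by the standard device (cf.\ \cite{Alp21}*{Proposition 1.4.6} for curves, where the role is played by $\omega_C^{\otimes 3}$): one fixes, uniformly over all objects via the boundedness Theorem~\ref{thm_boundedness}, a canonically defined relatively very ample line bundle $\cG=\omega_{\cX/B}^{[m]}\otimes\cI_\cD^{[-m(1-\epsilon)/r]}$, so that $f_*\cG$ is a vector bundle on $B$ and $\cX$ embeds canonically in $\bP(f_*\cG)$. The descent data on the $\cX_i$ then amounts to descent data for closed subschemes of $\bP$ of a descended vector bundle on $B$, and descent for closed subschemes of a fixed ambient scheme is effective. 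This is the missing idea in your approach: a canonical polarization that is part of the moduli data and hence respected by the descent isomorphisms.
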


\begin{proof}
Since our argument follows the same strategy in \cite{Alp21}*{Proposition 1.4.6}, we only sketch the salient steps here.
The role that in \emph{loc. cit.} is the one of $\omega_\sC^{\otimes 3}$, for us is $L \coloneqq \cO_{X}(m(K_X + (1-\epsilon)D))$, where $\epsilon$ and $m$ are chosen such that $L$ is very ample with $h^i(X,L)=0$ for $i \geq 1$ and such that $H^0(X,L)\to H^0(D^{sc},L|_{D^{sc}})$ is surjective.
These $m$ and $\epsilon$ can be chosen uniformly by Theorem \ref{thm_boundedness}. 

The fact that isomorphisms are a sheaf in the \'etale topology of $\sF_{n,p,I}$ follows from descent as in \cite{Alp21}*{Proposition 1.4.6}.

For proving that $\sF_{n,p,I}$ satisfies descent, we begin by the following observation.
Consider an object $f \colon (\cX,\cD)\to B$ of $\sF_{n,p,I}(B)$, and pick $m$ and $\epsilon$ as before. 
Then, up to replacing $m$ with some uniform multiple, from Remark \ref{remark_for_m_big_enough_kollar_comndition_guarantees_cartier} and from cohomology and base change, we can assume that $\cG \coloneqq \omega_{\cX/B}^{[m]}\otimes \cI_\cD^{[-\frac{m(1-\epsilon)}{r}]}$ is relatively very ample, and $f_*\cG$ is a vector bundle over $B$.
Indeed, by the deformation invariance of $\chi(\cX_b,\cG_b)$ and the vanishing of $H^i(X,L)$ for $i \geq 1$, it follows that the sections of $H^0(\cX_b,\cG_b)$ are deformation invariant, and hence $f_*\cG$ is a vector bundle.
Then, on every affine open trivializing $f_*\cG$, the morphism $\cX \rar \bP(f_*\cG)$ can be identified with $\cX \rar B \times \bP (H^0(\cX_b,\cG_b))$, and the latter is an embedding as it is an embedding over $B$ fiber by fiber, by Nakayama's lemma.
This gives an embedding $\cX\hookrightarrow \bP(f_*\cG)$, and composing it with $\cD\hookrightarrow \cX$ an embedding $\cD\hookrightarrow \bP(f_*\cG)$.
Now the proof is analogous to the one in \cite{Alp21}*{Proposition 1.4.6}.
\end{proof}

\begin{theorem}
Fix an integer $n\in \mathbb{N}$, a finite subset $I\subseteq (0,1] \cap \mathbb{Q}$, and a polynomial $p(t) \in \mathbb{Q}[t]$. Then $\sF_{n,p,I}$ is an algebraic stack.
\end{theorem}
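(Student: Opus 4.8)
The plan is to realize $\sF_{n,p,I}$ as a quotient stack $[Z/\PGL_{N+1}]$ for a suitable finite-type scheme $Z$, and then invoke the fact that the quotient of a finite-type algebraic space by a smooth affine group scheme is an algebraic stack. Since we have already shown that $\sF_{n,p,I}$ is a stack, it only remains to produce such a presentation (equivalently, a smooth representable surjection from a scheme together with a representable diagonal).

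First I would fix a uniform polarization. By Theorem \ref{thm_boundedness} the $p$-stable pairs under consideration are effectively log bounded; arguing as in the proof that $\sF_{n,p,I}$ is a stack, one can choose $m$ and $0<\epsilon\ll 1$ (with $\epsilon$ rational, $\epsilon<\epsilon_0$ as in Lemma \ref{lemma_bound_for_epsilon}) uniformly, so that for every family $(\cX,\cD)\to B$ of $p$-stable pairs the sheaf $\cG\coloneqq\omega_{\cX/B}^{[m]}\otimes\cI_\cD^{[-\frac{m(1-\epsilon)}{r}]}$ is relatively very ample, $R^if_*\cG=0$ for $i>0$, and $f_*\cG$ is locally free of a fixed rank $N+1$ whose formation commutes with base change (this uses Remark \ref{remark_for_m_big_enough_kollar_comndition_guarantees_cartier} and cohomology and base change). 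Zariski-locally on $B$, where $f_*\cG$ is trivialized, this yields an embedding $\cX\hookrightarrow\bP^N_B$ together with $\cD\hookrightarrow\cX$, canonical up to the $\PGL_{N+1}$-action coming from the choice of trivialization.

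Next I would build the parameter scheme. Let $P_\cX$ and $P_\cD$ be the Hilbert polynomials forced on $\cX$ and $\cD$ by $p(t)$, $n$, $I$, $m$, $\epsilon$. Starting from the flag Hilbert scheme parametrizing $\cD\subset\cX\subset\bP^N$ with these Hilbert polynomials, let $Z$ be the subscheme cut out by the following conditions, each of which I claim is locally closed:
\begin{itemize}
\item $\cX\to$ base is flat with geometrically connected, pure $n$-dimensional fibers, and $\cD\to$ base is flat, relatively $S_1$ (no embedded points), with fibers of pure dimension $n-1$ --- open conditions on the flat locus;
\item every geometric fiber $(\cX_b;\tfrac1r\cD_b)$ is a semi-log canonical $p$-stable pair with polynomial $p(t)$ and coefficients in $I$, and $\cO_{\cX_b}(1)$ agrees with the chosen polarizing bundle $\omega_{\cX_b}^{[m]}\otimes\cO_{\cX_b}(-\tfrac{m(1-\epsilon)}{r}D_b)$ --- locally closed, using the behaviour of slc singularities and of $p$-stability in families recalled in \S\ref{preliminaries}, together with the fact that the polynomial constraint is closed;
\item the Koll\'ar conditions (1) and (2) of Definition \ref{Def:functor} hold for the universal family; by boundedness there is a uniform $\widehat m$ for which $\omega^{[\widehat m]}$ and $\cI_\cD^{[\widehat m]}$ are fibrewise locally free, so via \cite{stacks-project}*{Tag 00MH} and the representability of Koll\'ar's husk/hull functors these reduce to the locus where finitely many reflexive powers are compatible with base change, which is locally closed.
\end{itemize}
The group $\PGL_{N+1}$ acts on $Z$, and there is a tautological morphism $Z\to\sF_{n,p,I}$ forgetting the embedding.

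Finally I would check that $Z\to\sF_{n,p,I}$ is a $\PGL_{N+1}$-torsor. By the second paragraph every object of $\sF_{n,p,I}(B)$ lifts to $Z$ Zariski-locally on $B$, and two lifts over the same base differ by a unique section of $\PGL_{N+1}$, since any isomorphism of polarized embedded pairs is determined by its action on global sections of the polarizing bundle. Hence $\sF_{n,p,I}\cong[Z/\PGL_{N+1}]$, which is algebraic because $Z$ is of finite type over $k$ and $\PGL_{N+1}$ is a smooth affine algebraic group; the representability of the diagonal follows from the fact that $\Isom$-schemes of embedded polarized pairs are locally closed in suitable Hom-schemes. I expect the main obstacle to be the verification that the defining conditions of $Z$ --- especially the semi-log-canonical condition and Koll\'ar's conditions (1)--(2) --- are genuinely locally closed; this is precisely where Theorem \ref{thm_boundedness} is indispensable, as it reduces an a priori infinite family of base-change conditions on reflexive powers to a finite one.
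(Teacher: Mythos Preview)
Your proposal is correct and follows essentially the same route as the paper: build a locally closed locus inside a Hilbert-type scheme parametrizing polarized embedded pairs $(\cD\subset\cX\subset\bP^N)$, and identify $\sF_{n,p,I}$ with its quotient by $\PGL_{N+1}$. The paper uses a relative Hilbert scheme (first $\sH_0$ for $\cX\subset\bP^N$, then $\sH_1\to\sH_0$ for $\cD\subset\cX$) rather than a flag Hilbert scheme, and imposes the Koll\'ar base-change conditions via flattening stratification \cite{Mum16} rather than husk functors, but these are interchangeable implementations of the same idea; the openness of the $p$-stability condition is handled in the paper via Proposition \ref{prop:nef:open}, which is the key non-obvious ingredient you allude to under ``behaviour of $p$-stability in families''.
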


\begin{proof}Let $r$ denote the index of $I$.
We will proceed in several steps.

{\bf Step 1:} In this step, we fix some invariants and consider a suitable Hilbert scheme parametrizing (among others) the total spaces of the $\bQ$-pairs of interest.

By Lemma \ref{lemma_bound_for_epsilon}, we may find a rational number $\epsilon \in (0,1)$ such that $K_X+(1-\epsilon)D$ is ample for every $\bQ$-stable pair $(X;D)$ with polynomial $p(t)$ and coefficients in $I$.
Without loss of generality, we may assume that $\epsilon = \frac{1}{k}$ for a suitable $k \in \nn$.
By Theorem \ref{thm_boundedness}, we may consider
a weak $\bQ$-stable morphism $\pi \colon (\cX;\cD)\to B$
with coefficients in $I$, polynomial $p(t)$, and of relative dimension $n$ that is effectively log bounding for our moduli problem.
By stratification of $B$, we may assume that $B$ is smooth, $\pi$ and $\pi|_{\cD}$ are flat, and that $\pi|_{\cD}$ has $S_1$ fibers.
Furthermore, by Proposition \ref{prop_stable_family_p-pairs}, $\pi$ also induces a family of stable pairs.
In particular, we can also regard $\cD$ as a divisor, not only as a subscheme, and we will be free to take its multiples.
Similarly, any natural multiple $k\cD$ can be regarded as a subscheme, by considering the vanishing locus of $\O \cX.(-k\cD)$.
Then, there is $m>0$ such that, for every $\bQ$-stable pair $(X;D)$ in our moduli problem, we have:
\begin{enumerate}
    \item $mD$ and $m(1-\epsilon)D$ are Cartier;
    \item $mK_X$ is Cartier; and
    \item $m(K_X+(1-\epsilon)D)$ is a very ample line bundle with vanishing higher cohomologies that embeds $X$ into $\bP(H^0(X,\cO_X (m(K_X+(1-\epsilon)D)))$.
\end{enumerate}
Furthermore, up to taking a multiple, we may assume that $\frac{m}{r}$ and $\frac{m(1-\epsilon)}{r}$ are integers, and
\begin{enumerate}
    \item $\frac{m}{r} \cD$ and $\frac{m(1-\epsilon)}{r} \cD$ are Cartier;
    \item $mK_{\cX/B}$ is Cartier; and
    \item $m(K_{\cX/B}+\frac{1-\epsilon}{r}\cD)$ is a relatively very ample line bundle.
\end{enumerate}

By boundedness and upper semi-continuity of the space of global sections, $h^0(X,\cO_X (m(K_X+(1-\epsilon)D))$ attains finitely many values.
Then, we have finitely many polynomials $q_1,\ldots,q_l$ such that the fibers of $\pi$ have Hilbert polynomial $q_i$ for some $i$, for the relatively ample line bundle $\cO_\cX(m(\K \cX/B.+\frac{1-\epsilon}{r}\cD))$.
We consider a union of Hilbert schemes for the polynomials $q_i$, and we denote such a union with $\sH_0$.
Over $\sH_0$, we have a universal family $f_0 \colon \sX\subseteq \sH_0\times \bP^N\to \sH_0$, where the fibers are closed subschemes of $\bP^N$ with Hilbert polynomial $q_i$ for some $i$.
Here, we observe that $N$ may actually attain finitely many distinct values, as we are assuming that each $X$ is embedded with a full linear series; on the other hand, we will work on one Hilbert scheme at the time, thus, by abusing notation, we will simply write $N$.

{\bf Step 2:} In this step, we highlight the strategy for the construction of the moduli functor.

We will construct our moduli functor as a subfunctor of a suitable relative Hilbert scheme, modulo the action of $\mathrm{PGL}_{N+1}$.
For this reason, we will shrink $\sH_0$ to cut the locus of interest for our moduli problem.
In doing so, we have to guarantee that this locus is locally closed and has a well-defined scheme structure.
If we shrink to an open subset, there is no ambiguity in the scheme structure.
On the other hand, if we need to consider a closed or locally closed subset, we need to show this choice has a well-defined scheme structure, which will be functorial in nature.
Finally, we need to guarantee that, at each step, the locus we consider is invariant under the action of $\mathrm{PGL}_{N+1}$.

{\bf Step 3:} In this step, we cut the locus parametrizing demi-normal schemes.

Since being $S_2$ is an open condition for flat and proper families \cite{EGAIV}*{Theorem 12.2.1}, and since small deformations of nodes are either nodes or regular points, up to shrinking $\sH_0$ we may assume that the fibers of $f_0$ are $S_2$ and nodal in codimension one.
That is, the fibers are demi-normal.

{\bf Step 4:} In this step, we cut the locus parametrizing varieties embedded with a full linear series.

Let $(X;D)$ be a $\bQ$-pair in our moduli problem, and let $I_X$ denote its ideal sheaf in $\pr N.$.
Then, by assumption we have $\O {\mathbb P ^N}.(1)|_X \cong \O X. (m(\K X. + (1-\epsilon)D))$ and the higher cohomologies of both sheaves vanish.
Thus, if we consider the short exact sequence
\[
0 \rar I_X \otimes \O {\mathbb P ^N}.(1) \rar \O {\mathbb P ^N}.(1) \rar \O {\mathbb P ^N}.(1)|_X \rar 0,
\]
it provides the following long exact sequence of cohomology groups
\[
0 \rar H^0(\pr N.,I_X \otimes \O {\mathbb P ^N}.(1)) \rar H^0(\pr N.,\O {\mathbb P ^N}.(1)) \rar H^0(X,\O {\mathbb P ^N}.(1)|_X) \rar H^1(\pr N.,I_X \otimes \O {\mathbb P ^N}.(1))=0,
\]
where the vanishing of $H^1(\pr N.,I_X \otimes \O {\mathbb P ^N}.(1))$ follows from the surjectivity of the map $H^0(\pr N.,\O {\mathbb P ^N}.(1)) \rar H^0(X,\O {\mathbb P ^N}.(1)|_X)$ and the vanishing of the higher order cohomologies of $\O {\mathbb P ^N}.(1)|_X$.
By definition of Hilbert scheme, $\sX \rar \sH_0$ is flat, and the subschemes parametrized correspond to a flat quotient sheaf of $\O \sX.$.
Thus, as we have a short exact sequence of sheaves where the last two terms are flat over the base, then so is the first term, which is the family of ideal sheaves of the subschemes of interest.
Thus, upper semi-continuity of the dimension of cohomology groups applies, and we may shrink $\sH _0$ to the open locus parametrizing varieties $Y$ with $H^0(\pr N.,I_Y \otimes \O {\mathbb P ^N}.(1))=0$.
This guarantees that, for every such variety $Y$, any automorphism of $Y$ preserving $\O {\mathbb P ^N}.(1)|_Y$ is induced by an automorphism of $\pr N.$.

{\bf Step 5:} In this step, we introduce a relative Hilbert scheme, in order to parametrize the boundaries of the $\bQ$-pairs of interest.

Proceeding as in Step 1, for every $\bQ$-pair $(X;D)$ in our moduli problem, we may consider the Hilbert polynomial of $rD$ with respect to $\cO_X(m(\K X.+(1-\epsilon)D))$.
Here, recall that $\cD$ corresponds to $rD$ fiberwise, hence the choice of Hilbert polynomial for $rD$ rather than for $D$.
By effective log boundedness and generic flatness of $\cD$, there exist finitely many such polynomials.
As before, we will deal with one Hilbert polynomial at the time, and omit this choice from the notation.

Now, $f$ is projective over $\sH_0$.
In particular, if we pull back the ample line bundle $\cO_{\sH_0\times \bP^N}(1)$ to get a relatively very ample line bundle $\sG$ on $\sX$, we can take the relative Hilbert scheme for the morphism $f$, the line bundle $\sG$, and the polynomial determined by $\cD$ (see \cite{ACH11}*{Ch. IX}).
This gives a scheme $\sH_1\to \sH_0$, together with an universal family $\sD\subseteq \sX_1 \coloneqq \sX\times_{\sH_0} \sH_1$.
Then, as $\sG$ corresponds to $\cO_X(m(\K X.+(1-\epsilon)D))$ on the elements of our moduli problem, every $\bQ$-pair of interest appears as a fiber of this family.

{\bf Step 6:} In this step, we shrink $\sH_1$ to an open subset such that $\sD \cap \mathrm{Sing}(\sX_1 \rar \sH_1)$ has codimension at least 2 along each fiber and such that the ideal sheaf $\sI_\sD$ of $\sD$ is relatively $S_2$.

For every $\bQ$-pair $(X;D)$, $\supp(D)$ does not contain any component of the double locus of $X$.
Thus, by upper semi-continuity of the dimension of the fibers of a morphism, we may shrink $\sH_1$ to an open subset such that $\sD \cap \mathrm{Sing}(\sX_1 \rar \sH_1)$ has codimension at least 2 along each fiber.

Now, by Step 3, all the fibers of $\sX_1 \rar \sH_1$ are demi-normal.
Thus, we may find an open subset $V \subset \sX_1$ such that the following properties hold:
\begin{enumerate}
    \item for every $s \in \sH_1$, $V_s$ is a big open set in $\sX_{1,p}$;
    \item $\sD \cap \mathrm{Sing}(\sX_1 \rar \sH_1) \subset \sX_1 \setminus V$; and
    \item the fibers of $V \rar \sH_1$ have at worst nodal singularities.
\end{enumerate}
Then, $V \rar \sH_1$ is a Gorenstein morphism, so, by \cite{stacks-project}*{Tag 0C08}, $\omega_{\sX_1/\sH_1}$ is an invertible sheaf along $V$.
Furthermore, by \cite{stacks-project}*{Tag 062Y}, the ideal sheaf $\sI_\sD$ is locally free along $V$.
Finally, up to shrinking $\sH_1$, we may assume that $\sI_\sD$ is relatively $S_2$.

Indeed, $\sX_1$ is flat over $\sH_1$, as $\sX$ is flat over $\sH_0$, and $\sD$ is flat over $\sH_1$ by definition of relative Hilbert scheme.
Thus, $\sI_\sD$ is flat over $\sH_1$, as it is the kernel of a surjection of flat sheaves.
Then, we conclude, as being $S_2$ is an open condition for flat and proper families \cite{EGAIV}*{Theorem 12.2.1}.
Notice that, by \cite{HK04}*{Proposition 3.5}, we have $\sI_\sD=\sI_\sD^{[1]}$.

Notice that in this step we shrank $\sH_1$ twice, and both times the process is invariant under the natural action of $\mathrm{PGL}_{N+1}$, as the locus is characterized by properties of the fibers.

{\bf Step 7:} In this step, we cut $\sH_1$ to a locally closed subset $\sH_2$ to ensure that $(\omega_{\sX_1/\sH_1}^{[\mu']}\otimes\sI_\sD^{[\frac{\mu}{r}]})^{[1]}$ is flat and $S_2$ over $\sH_1$, commutes with base change for every $\mu,\mu'\in \bZ$, and that $\sI_\sD^{[\frac{m(1-\epsilon)}{r}]}$, $\sI_\sD^{[\frac{m}{r}]}$, and $\omega_{\sX_1/sH_1}^{[m]}$ are invertible sheaves.
Here, if $\frac{\mu}{r}$ is not an integer, we set $\sI_\sD^{[\frac{\mu}{r}]} \coloneqq  \left(\sI_\sD^{\lfloor\frac{\mu}{r}\rfloor}\right)^{[1]}$.

The first claim follows immediately from \cites{kol08} applied to the sheaves $\sI_\sD^{[\frac{\mu}{r}]}$ and $\omega_{\sX_1/\sH_1}^{[\mu']}$ for $0 \leq \frac{\mu}{r}, \mu' \leq m$. Thus, there is a stratification into functorial locally closed subsets of $\sH_1$, which we denote by $C_i\subseteq \sH_1$, where the above sheaves are flat over $C_i$, they are $S_2$, and they commute with base change. Then, by Remark \ref{remark_for_m_big_enough_kollar_comndition_guarantees_cartier}, the sheaves $\sI_\sD^{[\frac{m(1-\epsilon)}{r}]}$, $\sI_\sD^{[\frac{m}{r}]}$,  and $\omega_{\sX_1/\sH_1}^{[m]}$ are invertible over each component $C_i$.

Considering the union of the $C_i$'s produces another base $\sH_2$, with a family $f_2 \colon \sX_2 \to \sH_2$ and a closed subset $\sD_2 \subset \sX_2$ as the one over $\sH_1$, but such that $\sI_{\sD_2}^{[\frac{m(1-\epsilon)}{r}]}$, $\sI_{\sD_2}^{[\frac{m}{r}]}$, and $\omega_{\sX_2/\sH_2}^{[m]}$ are Cartier and the formation of $(\omega_{\sX_1/\sH_1}^{[\mu']}\otimes\sI_\sD^{[\frac{\mu}{r}]})^{[1]}$ commutes with base change for every $0\leq \frac{\mu}{r}, \mu' \leq m$.
Then, the formation of
$(\omega_{\sX_1/\sH_1}^{[\mu']}\otimes\sI_\sD^{[\frac{\mu}{r}]})^{[1]}$ commutes with base change for every $\mu, \mu'$, since we can write $\mu=km+b$ and $\lfloor\frac{\mu}{r}\rfloor=k'm+b'$ for $0\le b, b' <m$, so we can write $(\omega_{\sX_1/\sH_1}^{[\mu']}\otimes\sI_\sD^{[\frac{\mu}{r}]})^{[1]}$ as a tensor product of a line bundle (namely $(\omega_{\sX_1/\sH_1}^{[k'm]}\otimes\sI_\sD^{[k\frac{m}{r}]})^{[1]}$) and a relatively $S_2$
sheaf which commutes with base change (namely $(\omega_{\sX_1/\sH_1}^{[b']}\otimes\sI_\sD^{[b]})^{[1]}$).

{\bf Step 8:} In this step, we shrink $\sH_2$ to an open subset parametrizing semi-log canonical pairs.

By the reductions in the previous steps, for every $k \geq 1$, the ideal sheaf $\sI_{\sD_2}^{[k]}$ determines a family of generically Cartier divisors, in the sense of \cite{kol_new}*{Definition 4.24}.
Indeed, this is obtained by dualizing the inclusion $\sI_{\sD_2}^{[k]} \rar \O \sX_2.$, as observed in \cite{kol_new}*{Definition 4.24}.
Then, as observed in \cite{kol_new}*{4.25}, a family of generically Cartier divisors induces a well-defined family of divisors.
Abusing notation, we denote by $\sD_2$ the family of divisors corresponding to $\sI_{\sD_2}$.

Now, recall that taking the reduced structure does not change the topology.
Thus, in the remainder of this step, we may assume that $\sH_2$ is reduced.
Then, $(\sX_2,\frac{1}{r}\sD_2) \rar \sH_2$ is a well defined family of pairs, and by \cite{kol_new}*{Corollary 4.45}, there is an open locus where the fibers of $(\sX_2,\frac{1}{r}\sD_2) \rar \sH_2$ are semi-log canonical.
Also, since we took an open subset of $\sH_2$, this choice is not affected by considering the reduced structure of $\sH_2$.

{\bf Step 9:} In this step, we use Koll\'ar's theorem \ref{teorema kollar constructible} to shrink $\sH_2$ to a locally closed and functorial subscheme parametrizing the $\bQ$-stable pairs in our moduli problem.

Using Theorem \ref{teorema kollar constructible}, up to replacing $\cH_2$ with such a (functorial) locally closed subscheme of it, we can assume that $\omega_{\sX_2/\sH_2}^{[m]}\otimes \sI_{\sD_2}^{[-\frac{m}{r}]}$ is relatively semi-ample.
Similarly, since being ample is an open condition, we can assume that $\omega_{\sX_2/\sH_2}^{[m]} \otimes \sI_{\sD_2}^{[-\frac{m(1-\epsilon)}{r}]}$ is relatively ample.
Now, recall that we have chosen $\epsilon = \frac{1}{k}$ for a chosen fixed $k \in \mathbb N$.
Notice that, the two former conditions guarantee that, for every $j \geq k$, we have that $\omega_{\sX_2/\sH_2}^{[rjm]} \otimes \sI_{\sD_2}^{[-m(j-1)]}$ is relatively ample, as we have $mj > m(j-1) \geq mj(1-\frac{1}{k})$.
Then, we may fix $n+1$ such values $j_1,\ldots,j_{n+1}$ and disregard all components of $\sH_2$ but the ones where, over $s \in \sH_2$, $(\omega_{\sX_2/\sH_2}^{[rj_im]} \otimes \sI_{\sD_2}^{[-m(j_i-1)]})^n$ has prescribed value.
By flatness, these self-intersections are locally constant, and thus this condition is open.
For each $i$, the self-intersection is prescribed by $p(1-\frac{1}{j_i})$, up to the rescaling factor given by $rj_im$.
Since we are prescribing $n+1$ values of a polynomial of degree $n$, this guarantees that all the fibers correspond to $\bQ$-stable pairs $(X;D)$ with $(K_X+tD)^n=p(t)$.

{\bf Step 10:} In this step, we cut $\sH_2$ to a closed subset $\sH_3$ to ensure that the natural polarization $\O \sH_3 \times \mathbb{P}^N.(1)$ coincides with $\omega_{\sX_3/\sH_3}^{[m]} \otimes \sI_{\sD_3}^{[-\frac{m(1-\epsilon)}{r}]}$.

By construction, for every $\bQ$-stable pair $(X;D)$ in our moduli problem, we have $\omega_{\sX_3/\sH_3}^{[m]} \otimes \sI_3^{-[\frac{m(1-\epsilon)}{r}]}|_X \sim \O X. (m(\K X. + (1-\epsilon)D)) \sim \O \mathbb P ^N .(1)|_X$.
Since $\omega_{\sX_3/\sH_3}^{[m]} \otimes \sI_3^{-[\frac{m(1-\epsilon)}{r}]}$ is a line bundle and the natural polarization of $\sH_2$ coming from the original choice of Hilbert scheme restricts to $\O \mathbb P ^ N. (1)$ fiberwise, by \cite{Vie95}*{Lemma 1.19} there is a locally closed subscheme $\sH_3$ where $\omega_{\sX_3/\sH_3}^{[m]} \otimes \sI_3^{-[\frac{m(1-\epsilon)}{r}]}$ is linearly equivalent to the natural polarization of $\sX_2 \rar \sH_2$ over $\sH_3$.

{\bf Step 11:} In this step, we show that there is an isomorphism $\sF_{n,p,I}\cong [\sH_3/\PGL_{N+1}] $.

Our argument follows closely \cite{Alp21}*{Theorem 2.1.11}.
First, we observe that all the cuts performed in the previous steps depend on properties that are invariant under the action of $\PGL_{N+1}$, thus the natural action of $\PGL_{N+1}$ descends onto $\sH_3$.
Then, observe that from its construction, over $\sH_3$ there is a stable family of $\bQ$-stable pairs.
This gives a morphism $\sH_3 \to \sF_{n,p,I}$, and if we forget the embedding into $\bP^N$, this descends to a morphism $\Phi^{pre} \colon [\sH_3/\PGL_{N+1}]^{pre} \to \sF_{n,p,I}$, where the superscript pre stands for prestack (see \cite{Alp21}*{Definition 1.3.12}).
This induces a map $\Phi \colon [\sH_3/\PGL_{N+1}] \to \sF_{n,p,I}$, which we now show is an isomorphism.

To show it is fully faithful, as in \cite{Alp21}, it suffices to check that $\Phi^{pre}$ is fully faithful.
But $\Phi^{pre}$ is fully faithful since any isomorphism between two families of $\bQ$-stable
pairs $\pi \colon (\sY;\sD)\to B$ and $\pi \colon (\sY',\sD')\to B$ over $B$ sends $\sL \coloneqq \omega_{\sY/B}^{[m]} \otimes \sI_{\sY}^{-[\frac{m(1-\epsilon)}{r}]}$ to $\sL' \coloneqq\omega_{\sY'/B}^{[m]} \otimes \sI_{\sY'}^{-[\frac{m(1-\epsilon)}{r}]}$, where we denoted by $ \sI_{\sY}$ (resp. $\sI_{\sY'}$) the ideal sheaves of $\sD$ (res. $\sD'$) in $\sY$ (resp. $\sY'$). This induces an unique isomorphism $\bP(\pi_*\sL )\cong \bP(\pi'_*\sL')$ which sends $\sY$ to $\sY'$.

Since $\Phi$ is a morphism of stacks, also essential surjectivity can be checked locally on $B$.
In particular, it suffices to check that if $\pi \colon (\sY;\sD)\to B$ is a family of $\bQ$-stable pairs such that $\pi_*\sL$ is free, then the morphism $B\to \sF_{n,p,I}$ lifts to a morphism $B\to \sH_3$.
This follows since if we pick an isomorphism $\bP (\pi_*\sL)\cong \bP^N \times B$ then
$\sY, \sD\subseteq \bP (\pi_*\sL) = \bP^N\times B$, and then from the functorial properties of $\sH_3$ it induces a morphism $B\to \sH_3$.
\end{proof}
\begin{Remark}
Observe that the stack $\sF_{n,p,I}$ is in fact Deligne--Mumford.
Indeed, since we are working over a field of characteristic 0, it suffices to show that the automorphisms of the objects over the points are finite. But this follows since an automorphism of a $\bQ$-pair $(X;D)$ induces an automorphism of the stable pair $(X,(1-\epsilon)D)$, and those are finite from \cite{KP17}*{Proposition 5.5}.
\end{Remark}

\section{Properness of $\sF_{n,p,I}$}\label{section_properness}
The goal of this section is to prove that $\sF_{n,p,I}$ is proper.
In particular, since in the definition of a $\bQ$-stable pair $(X;D)$ there are prescribed conditions on the scheme-theoretic structure of $D$.
Then, when proving that a moduli functor for $\bQ$-pairs satisfies the valuative criterion for properness, one needs to check that these scheme-theoretic properties are preserved.
It is convenient to check that the flat limit of $D^{sc}$ (recall that $D^{sc}$ was introduced in Notation \ref{notation_Dsc}) is $S_1$.
This will be the content of the next proposition.

\begin{Prop}\label{prop:flat:limit:is:S1}
Let $\spec(R)$ be the spectrum of a DVR with generic point $\eta$ and closed point $p$.
Consider a locally stable family $(X,D)\to \spec(R)$ such that $D$ is $\mathbb{Q}$-Cartier.
Then, for every $m, m'\in \mathbb{N}$, the sheaf $\omega_{X/B}^{[m']}(-mD)$ is $S_3$ on every point $x\in X_p$.
In particular:
\begin{enumerate}
    \item the restriction $\omega_{X/B}^{[m']}(-mD)|_{X_p}$ is $S_2$; and
    \item if we denote by $mD$ the closed subscheme of $X$ with ideal sheaf $\cO_X(-mD)$, then $(mD)|_{X_p}$ is $S_1$.
\end{enumerate}
\end{Prop}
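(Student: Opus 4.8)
The plan is to reduce the assertion to a depth computation on a cyclic cover. I would first record the elementary fact that if $t\in R$ is a uniformizer (so that $X_p=V(t)$ is a Cartier divisor on $X$) and $F$ is a coherent sheaf on which $t$ is a nonzerodivisor, then $\operatorname{depth}_xF=\operatorname{depth}_x(F/tF)+1$ for every $x\in X_p$. Applied to $F=\cO_X(-mD)$ this reduces assertion $(1)$ to the main statement, since $\cO_X(-mD)|_{X_p}=\cO_X(-mD)/t\cO_X(-mD)$; applied to the structure sheaf of a cover it will supply the extra unit of depth. I would also note at the outset that, since local stability means $(X,D+X_p)$ is semi-log canonical, $X$ is demi-normal while $X_p$ is reduced and demi-normal, and that $D$ has no vertical component (the fiber of $\operatorname{Supp}(D)\to\spec R$ over $p$ has pure dimension $n-1$, hence cannot contain an $n$-dimensional component of $X_p$), so every component of $D$ dominates $\spec R$.

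For the main statement, fix $x\in X_p$ and choose $\ell>0$ with $\ell D$ Cartier near $x$. Let $\pi\colon Y\to X$ be the associated degree-$\ell$ cyclic cover, i.e.\ the $\ell$-th root of a section of $\cO_X(\ell D)$ whose divisor is $\ell D$; then $\pi$ is finite, \'etale over $X\setminus\operatorname{Supp}(D)$, the reflexive pull-back $\pi^{[*]}\cO_X(-D)$ is invertible, and $\cO_X(-mD)$ is a direct summand of $\pi_*\big(\pi^{[*]}\cO_X(-mD)\big)=\pi_*\big((\pi^{[*]}\cO_X(-D))^{\otimes m}\big)$, the pushforward of a line bundle on $Y$ (the splitting being given by the trace, available in characteristic $0$). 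The crucial point---and the step I expect to require the most care---is that, because $(X,D+X_p)$ is slc and $D$ is $\qq$-Cartier, the cover $Y$ is demi-normal and $\pi$ is crepant: $(Y,D_Y+Y_p)$ is slc with $K_Y+D_Y+Y_p=\pi^*(K_X+D+X_p)$, where $Y_p\coloneqq\pi^{-1}(X_p)=V(\pi^{\#}t)$ is a reduced Cartier divisor (reduced because $\pi^{\#}t$ is a nonzerodivisor---$Y$ is flat over $R$---and $\pi$ is \'etale over the generic points of $X_p$, which lie off $D$). This is the demi-normal incarnation of the fact that crepant finite covers preserve log canonicity, for which I would appeal to the standard theory of cyclic covers (\cite{KM98}*{\S5}, \cite{Kol13}).

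Granting this, the conclusion is formal. Since $Y$ and $Y_p$ are demi-normal they are $S_2$, so for every $y\in Y_p$ the depth formula gives $\operatorname{depth}_y\cO_Y=\operatorname{depth}_y\cO_{Y_p}+1\ge\min(3,\dim\cO_{Y,y})$, i.e.\ $\cO_Y$---hence every line bundle on $Y$---is $S_3$ along $Y_p$. As depth commutes with finite pushforward ($\operatorname{depth}_x\pi_*\cF=\min_{y\in\pi^{-1}(x)}\operatorname{depth}_y\cF$), cannot decrease on a direct summand, and $\dim\cO_{Y,y}=\dim\cO_{X,x}$, I obtain $\operatorname{depth}_x\cO_X(-mD)\ge\min(3,\dim\cO_{X,x})$, which is the main statement; assertion $(1)$ follows from the depth formula applied to $\cO_X(-mD)$ itself.

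For assertion $(2)$ I would first show that $\cO_{mD}$ is flat over $R$. On a fiberwise-big open $V\subseteq X$ on which $mD$ is Cartier---such $V$ exists because $X$ is regular at the points of $\operatorname{Supp}(D)\cap V$, by the smoothness condition in the definition of a family of pairs---the divisors $mD|_V$ and $X_p|_V$ have no common component (every component of $D$ being horizontal), so $\cO_V(-mD|_V)$ is saturated with respect to $t$ inside $\cO_V$; as $\cO_X(-mD)$ is the pushforward of $\cO_V(-mD|_V)$, it is $t$-saturated in $\cO_X$, i.e.\ $\cO_{mD}$ is flat over $R$. Then the sequence $0\to\cO_X(-mD)|_{X_p}\to\cO_{X_p}\to\cO_{(mD)|_{X_p}}\to0$ is exact, and its first map is an isomorphism at the generic points of $X_p$ (which lie off $mD$); hence $\cO_{(mD)|_{X_p}}$ is the quotient of the $S_2$ sheaf $\cO_{X_p}$ by the codimension-one $S_2$ subsheaf $\cO_X(-mD)|_{X_p}$ (which is $S_2$ by $(1)$), and is therefore $S_1$ by \cite{Kol13}*{Corollary 2.61}.
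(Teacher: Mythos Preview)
Your argument is correct and captures the essential mechanism, but the paper takes a much shorter route. Rather than constructing the cyclic cover and verifying that $(Y,D_Y+Y_p)$ is slc, the paper simply observes that local stability forces every point $x\in X_p$ to lie outside the log canonical centers of $(X,D)$ (citing \cite{kol1}*{Proposition 2.13}), and then invokes Koll\'ar's depth theorem \cite{Kol13}*{Theorem 7.20} with $\Delta'=0$: since locally $mD\sim_{\qq}0$ (as $D$ is $\qq$-Cartier), that theorem gives $\operatorname{depth}_x\cO_X(-mD)\ge\min(3,\operatorname{codim}x)$ in one line. Your cyclic-cover computation is exactly the engine behind Theorem 7.20 in this special case, so the two arguments are morally the same; yours is more self-contained, at the cost of having to handle the cover carefully in the demi-normal setting---the point you rightly flag as delicate, and which does require taking the index-one cover $\operatorname{Spec}_X\bigoplus_{i=0}^{\ell-1}\cO_X(-iD)$ rather than the naive $t^\ell=f$ (the latter can fail to be demi-normal when $D$ has several components). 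For parts (1) and (2) the two approaches essentially coincide: the paper uses \cite{KM98}*{Proposition 5.3} for (1), and for (2) it first shows $\cO_{mD}$ is $S_2$ via \cite{Kol13}*{Corollary 2.61} (hence $S_1$, hence flat over the DVR), whereas you argue flatness directly via $t$-saturation; both then finish with the same depth lemma.
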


\begin{proof}
First, observe that since $X\to \spec(R)$ is locally stable, $x \in X_p$ cannot be a log canonical center for $X$ (see \cite{kol_new}*{Proposition 2.15}).
The statement is local, so up to shrinking $X$, we can assume that $X$ is affine, and, since both $\omega_{X/B}$ and $D$ are $\bQ$-Cartier, $\omega_{X/B}^{[m']}(-mD) \cong \cO_X$ for a certain $m_0 \in \nn$.
Then, for every $m, m' \in \nn$, we have  $\omega_{X/B}^{[m_0m']}(-m_0mD)\sim_\bQ0$.
In particular, if we apply \cite{Kol13}*{Theorem 7.20} where, with the notations of \emph{loc. cit.}, we take $\Delta'=0$ and if we denote by $D_K$ the divisor $D$ in \emph{loc. cit.}, $D_K= - m'K_{X/B} + mD$, we conclude that $\omega_{X/B}^{[m']}(-mD)$ is $S_3$.

Now, we denote by $\pi$ the pull-back to $X$ of a uniformizer on $\spec(R)$.
Since $\omega_{X/B}^{[m']}(-mD)$ is $S_2$ and $\pi$ is not a zero divisor on $\cO_X$, it is not a zero divisor for $\omega_{X/B}^{[m']}(-mD)$.
Then, \begin{center}$(\omega_{X/B}^{[m']}(-mD))|_{X_p}$ is $S_2$ by \cite{KM98}*{Proposition 5.3}.\end{center}

Notice that $\cO_X$ is $S_3$ along the special fiber, since it is a flat and proper family of $S_2$ schemes over a smooth base (see \cite{KM98}*{Proposition 5.3}).
Then, by \cite{Kol13}*{Corollary 2.61}, $\cO_{mD}$ is $S_2$ along the special fiber.
In particular, it is $S_1$, so its generic points are the only associated points.
Hence, if we denote by $mD$ the closed subscheme of $X$ with ideal sheaf $\cO_X(-mD)$, then $mD\to \spec(R)$ is flat.
Then, if we pull back the exact sequence
\[
0\to \cO_X(-mD)\to \cO_X\to \cO_{mD}\to 0
\]
to $X_p$, the sequence remains exact and we get
\[
0\to (\cO_X(-mD))|_{X_p}\to (\cO_X)|_{X_p}\to (\cO_{mD})|_{X_p}\to 0.
\]
The desired result follows again by \cite{Kol13}*{Corollary 2.61}.
\end{proof}

\begin{Prop} \label{properness_lc_case}
Fix an integer $n\in \mathbb{N}$, a finite subset $I\subseteq (0,1] \cap \mathbb{Q}$, and a polynomial $p(t) \in \mathbb{Q}[t]$.
Assume that $I$ is closed under sum: that is, if $a,b \in I$ and $a+b \leq 1$, then $a+b \in I$.
Let $C$ be a smooth affine curve, let $0 \in C$ be a distinguished closed point, and let $U\coloneqq C \setminus \{ 0 \}$.
Let $(\cX_U, \Delta_U; \cD_U) \rightarrow U$ be a weak $\bQ$-stable morphism of relative dimension $n$, polynomial $p(t)$, coefficients in $I$, and constant part $\Delta_U$.
Further, assume that the geometric generic fiber is normal.
Then, $(\cX_U, \Delta_U; \cD_U) \rightarrow U$ is a $\bQ$-stable morphism, and, up to a finite base change $B \rar C$, the fibration can be filled with a unique $\bQ$-stable pair of dimension $n$, with polynomial $p(t)$ and coefficients in $I$.
\end{Prop}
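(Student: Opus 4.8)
The plan is to reduce the statement to the valuative criterion for the (already known) properness of the moduli of stable pairs, to build the $p$-stable filling by running a relative minimal model program over the curve, and then to check the scheme-theoretic conditions of Definition \ref{Def:functor} directly. First I would perform the standard reductions: after a finite base change $B\to C$ (which I rename $C$, $0$, $U$) and a semistable alteration, fill $\cX_U\to U$ with an arbitrary proper flat family over $C$ and pass to a common log resolution $\mu\colon(\cW,\Theta)\to C$, where $\Theta$ is the reduced total transform of $\Supp(\tfrac{1}{r}\cD_U+\Delta_U)$ together with all $\mu$-exceptional divisors and the reduced central fibre $\cW_0$; since the geometric generic fibre is normal, one may arrange the total space $\cW$ to be normal and record the conductor (if any) as part of the constant boundary. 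Fixing a rational $0<\epsilon<\epsilon_0$, with $\epsilon_0$ as in Lemma \ref{lemma_bound_for_epsilon}, I would then run the relative $(K_{\cW}+\tfrac{1-\epsilon}{r}\cD_{\cW}+\Delta_{\cW}+\cW_0)$-minimal model program over $C$, with small coefficients placed on the exceptional divisors so that they are contracted, and pass to the relative canonical model; the existence of this model is where assumption (A), the boundedness of Theorem \ref{thm_boundedness}, and the bound on the relative dimension (via Lemma \ref{lemma minimal models}) enter. This produces a stable family $f\colon(\cX,\tfrac{1-\epsilon}{r}\cD+\Delta)\to C$, with $\cD$ being $\qq$-Cartier by the Koll\'ar condition of \cite{kol1,kol19s}, which over $U$ recovers $(\cX_U,\tfrac{1-\epsilon}{r}\cD_U+\Delta_U)\to U$; by construction this is the (unique) stable limit of that family.

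Next I would show that the special fibre is a $p$-stable pair. Since $(\cX,\tfrac{1-\epsilon}{r}\cD+\Delta)\to C$ is a stable family, $K_{\cX_0}+\tfrac{1-\epsilon}{r}\cD_0+\Delta_0$ is ample and $(\cX_0,\Delta_0)$ is slc; that $(\cX_0,\tfrac{1}{r}\cD_0+\Delta_0)$ is again slc follows from the construction of the relative canonical model, which is crepant to the corresponding model over $C$, together with the fact that $(\cX_U,\tfrac{1}{r}\cD_U+\Delta_U)$ is slc over $U$ and with Lemma \ref{lift stable pair slc} and Lemma \ref{ample model p-pair}, which identify $\cX_0$ with the $p$-stable lift of its own ample model. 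The identity $(K_{\cX_0}+\tfrac{t}{r}\cD_0+\Delta_0)^n=p(t)$ holds because these self-intersections are locally constant in the family and agree with $p(t)$ over $U$; the coefficients of $\cD_0$ lie in $I$ precisely because $I$ is closed under sum, so distinct components of $\cD_U$ specialising onto the same prime of $\cX_0$ contribute coefficients whose sum is again in $I$; and $\Delta_0$ is reduced and equals the limit of the constant part $\Delta_U$. Finally, to obtain the correct scheme structure, observe that $(\cX,\tfrac{1}{r}\cD+\Delta)\to C$ is a locally stable family with $\tfrac{1}{r}\cD$ being $\qq$-Cartier, so Proposition \ref{prop:flat:limit:is:S1} applies: the flat limit $\cD_0=(\tfrac{1}{r}\cD_0)^{sc}$ is $S_1$, and $\cD\to C$ is flat. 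Hence $(\cX,\Delta;\cD)\to C$ is a $p$-stable morphism with constant part, and, by Proposition \ref{prop_stable_family_p-pairs}, the same $\cX$ serves for every $0<\epsilon<\epsilon_0$.

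For uniqueness I would argue that any other $p$-stable filling $(\cX',\Delta';\cD')\to C$ gives, by Proposition \ref{prop_stable_family_p-pairs}, a stable family $(\cX',\tfrac{1-\epsilon}{r}\cD'+\Delta')\to C$ extending $(\cX_U,\tfrac{1-\epsilon}{r}\cD_U+\Delta_U)\to U$; by separatedness of the moduli of stable pairs it is isomorphic to $(\cX,\tfrac{1-\epsilon}{r}\cD+\Delta)\to C$ compatibly with the given isomorphism over $U$, whence $\cX\cong\cX'$, $\Delta=\Delta'$ and $\tfrac{1}{r}\cD=\tfrac{1}{r}\cD'$ as $\qq$-divisors; the scheme structures agree since both are cut out by $\cO_{\cX}(-\cD)$. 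This also shows that the filling is independent of the auxiliary choice of $\epsilon$.

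The main obstacle is the second paragraph: showing that the stable limit, which a priori is only slc for the boundary $\tfrac{1-\epsilon}{r}\cD_0$, is in fact a $p$-stable pair — that is, controlling the singularities of the pair carrying the full boundary $\tfrac{1}{r}\cD_0$ and the collision of boundary components under specialisation — and, underlying this, the existence of the relative canonical model used in the construction of $\cX$, which is exactly the point where assumption (A) cannot be dispensed with.
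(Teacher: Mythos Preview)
Your approach has a genuine gap, and you have correctly identified where it lies, but the remedy you sketch does not work.  You construct $(\cX,\tfrac{1-\epsilon}{r}\cD+\Delta)\to C$ as the stable limit of $(\cX_U,\tfrac{1-\epsilon}{r}\cD_U+\Delta_U)\to U$ and then try to promote the central fibre to a $p$-stable pair.  Three things go wrong.  First, on this stable limit only $K_{\cX}+\tfrac{1-\epsilon}{r}\cD+\Delta$ is $\qq$-Cartier; there is no reason for $\cD$ itself (or $K_{\cX}+\tfrac{1}{r}\cD+\Delta$) to be $\qq$-Cartier, and Koll\'ar's conditions do not imply this.  Your later appeal to Proposition \ref{prop_stable_family_p-pairs} to see that ``the same $\cX$ serves for every $\epsilon$'' is circular: that proposition \emph{assumes} a $p$-stable morphism, which is what you are trying to build.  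A priori the stable limit depends on $\epsilon$.  Second, even if $\cD$ were $\qq$-Cartier, nothing in your construction forces $K_{\cX_0}+\tfrac{1}{r}\cD_0+\Delta_0$ to be \emph{nef}, which is part of the definition of a $p$-stable pair.  Third, the slc-ness of $(\cX_0,\tfrac{1}{r}\cD_0+\Delta_0)$ does not follow from Lemma \ref{lift stable pair slc} and Lemma \ref{ample model p-pair}: those lemmas build one particular $p$-stable lift of a stable pair (via the semi-canonical modification) and describe its exceptional locus, but they do not identify \emph{your} $\cX_0$ with that lift.

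The paper's proof avoids all three issues by reversing the order of operations.  It first passes to the relative ample model $(\cY_U,\tfrac{1}{r}\cD_{\cY,U}+\Delta_{\cY,U})\to U$ with the \emph{full} coefficient $\tfrac{1}{r}$ (Step 1), compactifies this to a stable family $(\cY,\tfrac{1}{r}\cD_\cY+\Delta_\cY)\to C$, and takes a $\qq$-factorial log canonical model $\cX'$ over $\cY$ (Steps 2--3).  The $p$-stable filling $\cX$ is then produced by running an MMP \emph{over $\cY$} for $K_{\cX'}+\tfrac{1}{r}\cD'+\Delta'-\epsilon\cB'$, where $\cB'$ is the closure of the rational pull-back of $\cD_U$ (Steps 4--6).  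Because $K_{\cX'}+\tfrac{1}{r}\cD'+\Delta'$ is pulled back from $\cY$, this MMP is governed by $-\epsilon\cB'$ alone; hence the resulting model is independent of $\epsilon>0$, so $\cD=\cB$ is $\qq$-Cartier and therefore $K_\cX+\tfrac{1}{r}\cD+\Delta$ is $\qq$-Cartier as well.  Nefness of $K_{\cX_0}+\tfrac{1}{r}\cD_0+\Delta_0$ is automatic because $K_\cX+\tfrac{1}{r}\cD+\Delta$ is the pull-back of the relatively ample $K_\cY+\tfrac{1}{r}\cD_\cY+\Delta_\cY$, and log canonicity of $(\cX,\tfrac{1}{r}\cD+\Delta+\cX_0)$ holds by construction of the $\qq$-factorial log canonical model.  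In short, the missing idea in your argument is the detour through the ample model with full coefficient; without it, none of the $p$-stability conditions on the central fibre are forced.
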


\begin{proof}
We proceed in several steps.
In the following, $r$ will denote the index of $I$.

{\bf Step 1:} In this step, we show that the family of pairs $(\cX_U,\frac{1}{r}\cD_U + \Delta_U) \rar U$ admits a relative canonical model $(\cY_U, \frac{1}{r}\cD _{U,\cY} + \Delta_{U, \cY})$ over $U$.

By Proposition \ref{prop_stable_family_p-pairs}, the fact that $C$ is smooth and inversion of adjunction, it follows that $(\cX _U, \frac{1}{r}\cD_U + \Delta_U)$ is a log canonical pair.
By Lemma \ref{lemma minimal models} and the fact that, by definition, all the fibers admit a good minimal model, the pair $(\cX _U, \frac{1}{r}\cD_U + \Delta_U)$ admits a morphism to its relative canonical model $(\cY_U, \frac{1}{r}\cD _{U,\cY} + \Delta_{U, \cY})$ over $U$.

{\bf Step 2:} In this step, we show that, up to a base change, $(\cY_U, \frac{1}{r}\cD _{U,\cY} + \Delta_{U,\cY}) \rar U$ can be compactified to a family of stable pairs $(\cY,\frac{1}{r}\cD_\cY + \Delta_\cY)$.

This is accomplished in \cite{kol_new}*{Theorem 4.59}.

{\bf Step 3:} In this step, we choose a suitable $\qq$-factorial dlt modification $(\cX', \frac{1}{r}\cD' + \Delta'+\cX'_0) \rightarrow (\cY, \frac{1}{r}\cD _{\cY} + \Delta_{\cY}+\cY_0)$ so that $\cX'_U \drar \cX_U$ is a rational contraction over $U$.

Since $K_{\cX_U}+\frac{1-\epsilon}{r}\cD_U + \Delta_U$ is ample over $U$ for $0 < \epsilon \ll 1$, the divisors contracted by $\cX_U \rightarrow \cY_U$ are contained in the support of $\cD$.
Then, by \cite{Mor19}*{Theorem 1}, we can extract all of these divisors.
Finally, we take a $\qq$-factorial dlt modification of this model just constructed.
We denote this model by $(\cX', \frac{1}{r}\cD' + \Delta'+\cX'_0) \rightarrow (\cY, \frac{1}{r}\cD _{\cY} + \Delta_{\cY}+\cY_0)$.
In writing $(\cX', \frac{1}{r}\cD' + \Delta'+\cX'_0)$, we define $\cD'$ by first pulling back $\cD_U$ to a common resolution of $\cX'_U$ and $\cX_U$, then pushing it forward to $\cX'_U$, and finally taking its closure in $\cX'$.
In particular, $\cD'$ is horizontal over $C$.

{\bf Step 4:} In this step, we construct the compactification $(\cX,\Delta;\frac{1}{r}\cD)$.

We consider the pair $(\cX', \frac{1-\epsilon}{r}\cD' + (\Delta')^h)$ for $0<\epsilon \ll 1$, where the notation $(\Delta')^h$ stands for horizontal over $C$.
Then, $(\cX_U,\frac{1-\epsilon}{r}\cD_U+\Delta_U)$ is a relative good minimal model for $(\cX'_U, \frac{1-\epsilon}{r}\cD'_U + (\Delta'_U)^h)$ over $\cY_U$.
Then, by \cite{HX13}, $(\cX', \frac{1-\epsilon}{r}\cD' + (\Delta')^h)$ admits a relative good minimal model over $\cY$.
We denote its relative canonical model over $\cY$ by $(\cX,\frac{1-\epsilon}{r}\cD+\Delta^h)$.

First, we observe that $\Delta^h=\Delta$.
Notice that $\Delta_U$ is horizontal over $U$ by assumption, so $\Delta^v$, which denotes the vertical components of $\Delta$, has to be supported on $\cX_0$.
Then, the claim follows, since $(\cY,\frac{1}{r}\cD_\cY+\Delta_\cY + \cY_0)$ is log canonical.

Now, we observe that $\cD$ is $\qq$-Cartier.
Indeed, $\K \cX. + \frac{1}{r}\cD+\Delta$ is $\qq$-Cartier, as it is the pull-back of $\K \cY. + \frac{1}{r}\cD_\cY + \Delta_\cY$.
By construction, we have that $\K \cX. + \frac{1-\epsilon}{r}\cD + \Delta$ is ample over $\cY$, and in particular it is $\qq$-Cartier.
Then, $\cD$ is the difference of two $\qq$-Cartier divisors.

Since $\cD$ is $\qq$-Cartier, it follows that this canonical model is independent of $0 < \epsilon \ll 1$, as $\K \cX. + \frac{1-\epsilon}{r}\cD + \Delta \sim_{\qq,\cY} -\frac{\epsilon}{r}\cD$.
Furthermore, if $0 < \epsilon \ll 1$, we have that $\K \cX. + \frac{1-\epsilon}{r}\cD + \Delta$ is ample over $C$, as it is ample over $\cY$ and $\K \cY. + \frac{1}{r}\cD_\cY + \Delta_\cY$ is ample over $C$.

{\bf Step 5:} In this step, we show that $(\cX,\Delta;\frac{1}{r}\cD)$ is a $\bQ$-pair and that the central fiber of $(\cX,\Delta;\frac{1}{r}\cD)$ is a $\bQ$-stable pair with polynomial $p(t)$ and coefficients in $I$.

Recall that $\cX$ is independent of $\epsilon$, and that $\cD$ is $\qq$-Cartier.
As $(\cX,\frac{1}{r}\cD + \Delta)$ is log canonical by construction, it follows that $(\cX,\Delta;\frac{1}{r}\cD)$ is a $\bQ$-pair.

By construction and by adjunction, $(\cX_0,\frac{1}{r}\cD_0 + \Delta_0)$ is semi-log canonical.
Furthermore, $\K \cX_0. + \frac{1-\epsilon}{r}\cD_0 + \Delta_0$ is ample for $0 < \epsilon \ll 1$.
Since $\cD$ and $K_{\cX} + \frac{1}{r}\cD +\Delta$ are $\mathbb{Q}$-Cartier, the self-intersection $(K_{\cX_c} + \frac{1}{r}\cD_c +\Delta_c-\epsilon \cD_c)^{{\rm dim}(\cX_0)}$ is well-defined for every $\epsilon$ and $c\in C$, and does not depend on $c \in C$.
Since the general fiber is $\bQ$-stable with polynomial $p(t)$, we have $(K_{\cX_0} + \frac{1}{r}\cD_0 +\Delta_0-\epsilon \cD_0)^{{\rm dim}(\cX_0)} = p(1-\epsilon)$.
The coefficients of $\frac{1}{r}\cD_0$ are still in $I$,  since $I$ is closed under addition.

{\bf Step 6:} In this step, we show that $(\cX, \Delta;\cD) \rar C$ is a $\bQ$-stable morphism with constant part $\Delta$.

By construction, the fibers are proper.
Since the base is a curve and every divisor is horizontal, all the morphisms are flat of the appropriate relative dimension.
By Step 7, every fiber is a $\bQ$-stable pair.
Then, as $\cD$ is $\qq$-Cartier, every fiber of $(\cX,\Delta) \rar C$ is semi-log canonical.
Thus, by \cite{kol_new}*{Definition-Theorem 4.7}, the morphism $(\cX,\Delta) \rar C$ is locally stable.

{\bf Step 7:} In this step, we show that the limit is unique.

From Theorem \ref{thm_boundedness}, there exists $\epsilon_0 > 0$ such that, for every $\bQ$-stable pair $(Z,B,\Gamma)$ with coefficients in $I$ and polynomial $p(t)$, $(Z,(1-\epsilon_0)B + \Gamma)$ is a stable pair.
Then, the claim follows from the separatedness of stable morphisms \cite{kol_new}*{2.49}.
\end{proof}

\begin{theorem}\label{Thm_properness}
Fix an integer $n\in \mathbb{N}$, a finite subset $I\subseteq (0,1] \cap \mathbb{Q}$, and a polynomial $p(t) \in \mathbb{Q}[t]$.
Assume that $I$ is closed under sum: that is, if $a,b \in I$ and $a+b \leq 1$, then $a+b \in I$.
Let $C$ be a smooth affine curve, let $0 \in C$ be a distinguished closed point, and let $U\coloneqq C \setminus \{ 0 \}$.
Let $(\cX_U, \Delta_U; \cD_U) \rightarrow U$ be a weak $\bQ$-stable morphism of dimension $n$, polynomial $p(t)$, coefficients in $I$, and constant part $\Delta_U$.
Then, $(\cX_U, \Delta_U; \cD_U) \rightarrow U$ is a $\bQ$-stable morphism, and, up to a finite base change $B \rar C$, the fibration can be filled with a unique $\bQ$-stable pair of dimension $n$, with polynomial $p(t)$ and coefficients in $I$.
\end{theorem}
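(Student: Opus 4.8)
The plan is to reduce the (possibly non‑normal, i.e.\ slc) case to the normal case of Proposition \ref{properness_lc_case} by means of Kollár's gluing theory. After shrinking $U$ around $0$, I would normalize fibrewise: $\nu\colon \cX_U^\nu\to \cX_U$ has a reduced conductor divisor $\Gamma_U\subseteq\cX_U^\nu$, and on its normalization $\Gamma_U^\nu$ there is the gluing involution $\tau_U$, which by adjunction is an isomorphism of the locally stable family of $(n-1)$‑dimensional pairs $(\Gamma_U^\nu,\mathrm{Diff})$. Up to a finite base change $C'\to C$ we may assume the finitely many components of $\cX_U^\nu$ and of $\Gamma_U^\nu$ are geometrically irreducible and are not permuted by monodromy. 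Since finite pullback preserves ampleness and the slc condition pulls back to the lc condition, each component is then a normal $p_i$‑stable morphism with constant part $(\cW_{i,U},\,\Delta^\nu_{i,U}+\Gamma_{i,U};\,\cD_{i,U})\to U$, where $\Delta^\nu_{i,U}$ is the pullback of $\Delta_U$ and $\Gamma_{i,U}$ the reduced conductor piece; by additivity of self‑intersection numbers over the components of a demi‑normal variety together with adjunction for normalizations, $\sum_i p_i=p$, and the coefficients lie in the finite, sum‑closed set $I\cup\{1\}$.

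Applying Proposition \ref{properness_lc_case} to each of these, after a further finite base change $C''\to C'$ common to all $i$, each extends uniquely to a normal $p_i$‑stable morphism with constant part $(\cW_i,\Omega_i;\cD_i)\to C''$ whose central fibre is $p_i$‑stable, with $\Omega_i$ the closure of $\Delta^\nu_{i,U}+\Gamma_{i,U}$; write $\Gamma_i\subseteq\cW_i$ for the closure of $\Gamma_{i,U}$. Equipped with the different induced by $K_{\cW_i}+\tfrac1r\cD_i+\Omega_i$ (which is $C''$‑ample), the conductor normalization $(\bigsqcup_i\Gamma_i)^\nu\to C''$ is a stable family of pairs of dimension $n-1$, and $\tau_U$ is an isomorphism of its fibres over $U$; by separatedness of the moduli of stable pairs of dimension $n-1$ (which I take as known, cf.\ \cite{kol1}*{2.45}), $\tau_U$ extends uniquely to an involution $\tau$ over $C''$, which remains compatible with the different, this being a closed condition satisfied over $U$.

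Next I would invoke Kollár's gluing theorem for families of slc pairs: the locally stable family $(\bigsqcup_i\cW_i,\,\tfrac1r\cD+\Omega)\to C''$ together with $\tau$ on the conductor normalization determines a unique family $(\cX,\tfrac1r\cD+\Delta)\to C''$ with slc fibres whose normalization reproduces the $\cW_i$, the conductor and $\tau$; using Lemma \ref{lemma:canonical:model:of:slc:pairs:has:connected:fibers} to control canonical classes, $K_\cX+\tfrac1r\cD+\Delta$ pulls back to $K_{\sqcup_i\cW_i}+\tfrac1r\cD+\Omega$, so in particular $\cD$ is $\mathbb Q$‑Cartier on $\cX$. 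Since the gluing data over $U$ coincide with those of $\cX_U$, we get $\cX|_{U''}=\cX_{U''}$. It then remains to check $(\cX,\Delta;\cD)\to C''$ is a $p$‑stable morphism with constant part: flatness, properness and the relative dimensions are clear (all divisors being horizontal over a curve); $(\cX_0,\tfrac1r\cD_0+\Delta_0)$ is slc with $K_{\cX_0}+\tfrac{1-\epsilon}{r}\cD_0+\Delta_0$ ample for $0<\epsilon\ll1$, both checked after the finite pullback $\bigsqcup_i\cW_{i,0}\to\cX_0$ using the corresponding properties of the $p_i$‑stable central fibres; the central fibre has polynomial $p(t)$ because $(K_{\cX_c}+\tfrac{t}{r}\cD_c+\Delta_c)^n$ is constant in $c\in C''$; the coefficients of $\cD_0$ and $\Delta_0$ lie in $I$ as no new horizontal component appears; and $\cD\to C''$ is relatively $S_1$ by Proposition \ref{prop:flat:limit:is:S1} applied to the locally stable family $(\cX,\tfrac1r\cD+\Delta)\to C''$ with $\cD$ $\mathbb Q$‑Cartier. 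Uniqueness follows as in Step 9 of Proposition \ref{properness_lc_case}: for $0<\epsilon_0\ll1$ coming from Theorem \ref{thm_boundedness}, any $p$‑stable filling produces a stable family $(\cX,\tfrac{1-\epsilon_0}{r}\cD+\Delta)\to C''$, and two such agreeing over $U''$ coincide by separatedness of stable morphisms \cite{kol1}*{2.45}, whence the boundary $\mathbb Q$‑divisors and their scheme structures agree.

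The main obstacle is the gluing step: extending the involution across the closed fibre (which rests on the lower‑dimensional theory being separated and on the compatibility with the different being preserved) and then verifying that Kollár's reconstruction is a genuine $p$‑stable morphism. The subtlest point among the verifications is that the flat limit of the boundary remains $S_1$ — precisely the pathology the whole framework is designed to exclude — which here comes out of Proposition \ref{prop:flat:limit:is:S1} once $\cX\to C''$ is known to be locally stable with $\mathbb Q$‑Cartier boundary.
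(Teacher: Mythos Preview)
Your overall strategy matches the paper's: normalize, apply Proposition \ref{properness_lc_case} componentwise, then glue. The difference is in how the gluing is executed, and here your argument has a gap.

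You claim that the conductor normalization $(\bigsqcup_i\Gamma_i)^\nu$, equipped with the different coming from $K_{\cW_i}+\tfrac1r\cD_i+\Omega_i$, is a \emph{stable} family over $C''$, and then use separatedness of the $(n-1)$-dimensional moduli to extend $\tau$. But $K_{\cW_i}+\tfrac1r\cD_i+\Omega_i$ is only nef over $C''$, not ample (that is precisely the distinction between a $p$-stable morphism and a stable one), so its restriction to $\Gamma_i^\nu$ need not be ample: curves in $\Gamma_i$ lying in $\supp(\cD_i)$ can be $(K+\tfrac1r\cD+\Omega)$-trivial (cf.\ Lemma \ref{ample model p-pair}). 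Hence the conductor family is not a priori stable, and you cannot invoke separatedness of the stable moduli directly on it.

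The paper fixes this by shifting to the stable coefficient before gluing. After filling in each normalized component, it chooses a single $0<\epsilon\ll1$ (using Lemma \ref{lemma_bound_for_epsilon}) so that $(\overline\cX,\tfrac{1-\epsilon}{r}\overline\cD+\overline\Delta)\to C$ is a genuine \emph{stable} morphism. Then the gluing is handled in one stroke by citing \cite{kol1}*{Corollary 2.56}, which already packages the extension of the involution and the reconstruction of the slc family for stable morphisms. After gluing, $\cD$ is seen to be $\mathbb Q$-Cartier via \cite{Kol13}*{Corollary 5.39}, and the polynomial condition follows by flatness. Your argument can be repaired along the same lines: run the conductor/involution discussion for the different attached to $K_{\cW_i}+\tfrac{1-\epsilon}r\cD_i+\Omega_i$ (which is ample), or simply cite \cite{kol1}*{Corollary 2.56} as the paper does and avoid reproving the gluing machinery.
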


\begin{proof}
By Proposition \ref{properness_lc_case}, we may assume that the geometric generic fiber is not normal.
In the following, $r$ will denote the index of $I$.

Let $(\overline \cX_U, \frac{1}{r}\overline \cD_U + \overline \Delta_U)$ denote the normalization of $(\cX_U, \frac{1}{r}\cD_U + \Delta_U)$, where $\overline{\Delta}_U$ also includes the conductor with coefficient 1.
By assumption, $\cD_U$ is $\bQ$-Cartier.
Then, by \cite{Kol13}*{Corollary 5.39}, $\overline{\cD}_U$ is $\bQ$-Cartier.
Then, since $\K \cX_U. + \frac{1-\epsilon}{r}\cD_U + \Delta_U$ is ample over $U$, it follows that $(\overline \cX _U, \overline \Delta_U; \overline \cD _U)$ is $\bQ$-stable morphism with constant part $\overline \Delta_U$, where the polynomial on each connected component depends on the original choice of $p$.

Then, as $\overline{\cX}_U$ has finitely many connected components, by Proposition \ref{properness_lc_case}, there is a finite base change $B \rar C$ such that the family can be filled with a unique $\bQ$-stable pair.
To simplify the notation, we omit the base change $B \rar C$, and we assume that the filling is realized over $C$ itself.
We denote this family by $(\overline \cX,  \overline \Delta; \overline \cD) \rar C$.
Then, we may find $0 < \epsilon \ll 1$ such that $(\overline \cX, \frac{1-\epsilon}{r} \overline \cD  + \overline \Delta) \rar C$ is a stable morphism.
By \cite{kol_new}*{Lemma 2.54}, also $(\cX _U, \frac{1-\epsilon}{r} \cD _U + \Delta_U)$ admits a compactification $(\cX , \frac{1-\epsilon}{r} \cD + \Delta)$ over $C$ obtained by gluing $\overline{\cX}$ along some components of $\overline{\Delta}$.
By \cite{Kol13}*{Corollary 5.39}, the divisor $\cD$ is $\bQ$-Cartier.
Thus, we have that $\cD_0$ is $\bQ$-Cartier, as needed.
Similarly, the coefficients of $\frac{1}{r}\cD_0+\Delta_0$ are in $I$, by construction.

Now, we prove that the special fiber $(\mathcal{X}_0,\Delta_0;\frac 1 r \mathcal{D}_0)$ is a $\mathbb Q$-stable pair.
We already verified that $\mathcal{D}_0$ is $\mathbb Q$-Cartier.
Now, we verify that $(\mathcal{X}_0,\Delta_0;\frac 1 r \mathcal{D}_0)$ is semi-log canonical.
By construction, $(\mathcal{X}_0,\frac{1-\epsilon}{r} \mathcal{D}_0+\Delta_0)$ is semi-log canonical.
Thus, by \cite{Kol13}*{Definition-Lemma 5.10}, it suffices to show that the normalization of $(\mathcal{X}_0,\frac{1}{r} \mathcal{D}_0+\Delta_0)$ is log canonical.
But then, this holds by construction, as its normalization coincides with the normalization of the (possibly disconnected) semi-log canonical pair $(\overline{\mathcal{X}}_0,\frac{1}{r}\overline{\mathcal{D}}_0+\overline{\Delta}_0)$.
The same argument applied to the total space shows that $(\overline{X},\frac{1}{r}\mathcal{D}+\Delta)$ is semi-log canonical.

Lastly, we need to check that $K_{\mathcal{X}_0}+\frac{1}{r} \mathcal{D}_0+\Delta_0$ is semi-ample.
To this end, by adjunction, it suffices to show the stronger statement that $K_{\mathcal{X}}+\frac{1}{r} \mathcal{D}+\Delta$ is semi-ample over $C$.
By Proposition~\ref{properness_lc_case}, this is true for $K_{\overline{\mathcal{X}}}+\frac{1}{r} \overline{\mathcal{D}}+\overline{\Delta}$.
Then, the claim follows by \cite{HX16}*{Theorem 2}.
Then, since $\mathcal{D}$ does not contain any irreducible component of the double locus of $\mathcal{X}$, it is immediate that the relative canonical model of $(\mathcal{X},\frac{1}{r}\mathcal{D}+\Delta)$ coincides with the gluing of the relative canonical models of the irreducible components of $(\overline{\mathcal{X}},\frac{1}{r}\overline{\mathcal{D}}+\overline{\Delta})$, which in turn provides a stable family.
We observe that the gluing of said ample models of is possible by \cite{HX13}*{\S~7} and the following fact: as the exceptional locus of the morphism from $\overline{\mathcal{X}}$ to the canonical model is contained in $\overline{\mathcal{D}}$ and $\overline{\mathcal{D}}$ does not contain any irreducible component of the conductor, the involution defined on the conductor via the normalization of $(\mathcal{X},\frac{1}{r}\mathcal{D}+\Delta)$ naturally descends to the canonical model of $(\overline{\mathcal{X}},\frac{1}{r}\overline{\mathcal{D}}+\overline{\Delta})$.
We observe that the involution defined on the conductor via the normalization of $(\mathcal{X},\frac{1}{r}\mathcal{D}+\Delta)$ preserves the different, which is a necessary condition for the gluing of the ample models, by \cite{Kol13}*{Proposition 5.12}.

To conclude, we need to show that $(\K \cX_0. + \frac{t}{r}\cD_0 + \Delta_0)^n=p(t)$.
This follows from flatness over the base $C$, as we have
\[
(\K \cX_0. + \frac{t}{r}\cD_0 + \Delta_0)^n = (\K \cX_c. + \frac{t}{r}\cD_c + \Delta_c)^n = p(t),
\]
where $c \in C \setminus \{ 0 \}$.
This concludes the proof.
\end{proof}

\begin{Cor}Fix an integer $n\in \mathbb{N}$, a finite subset $I\subseteq (0,1] \cap \mathbb{Q}$, and a polynomial $p(t) \in \mathbb{Q}[t]$.
Assume that $I$ is closed under sum: that is, if $a,b \in I$ and $a+b \leq 1$, then $a+b \in I$. Then the algebraic stack $\sF_{n,p,I}$ is proper.
\end{Cor}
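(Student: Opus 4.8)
The plan is to show that $\sF_{n,p,I}$ is of finite type over $\bC$, separated, and universally closed, and then conclude properness via the valuative criterion for algebraic stacks. Finite type is immediate from Theorem \ref{thm_boundedness}: in the proof of algebraicity we realized $\sF_{n,p,I}$ as a quotient stack $[\sH_3/\PGL_{N+1}]$ with $\sH_3$ of finite type over $\bC$. Since $\sF_{n,p,I}$ is of finite type over $\bC$, a standard spreading-out argument reduces the valuative criterion to discrete valuation rings of the form $R = \cO_{C,0}$, where $C$ is a smooth affine curve over $\bC$ and $0 \in C$ a closed point; write $K = \mathrm{Frac}(R)$ and $U = C \setminus \{0\}$.

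Given a morphism $\spec K \to \sF_{n,p,I}$, after shrinking $C$ around $0$ I would spread it out to $U \to \sF_{n,p,I}$, that is, to a $p$-stable morphism $(\cX_U;\cD_U) \to U$ with $\Delta_U = 0$. Theorem \ref{Thm_properness} then provides, after a finite base change $B \to C$, a unique $p$-stable pair filling the family, i.e. a $p$-stable morphism $(\cX_B;\cD_B) \to B$ restricting over $B \times_C U$ to the pullback of $(\cX_U;\cD_U)$. The crucial point is then that $(\cX_B;\cD_B) \to B$ is really an object of $\sF_{n,p,I}(B)$: the four bullet points of Definition \ref{Def:functor} hold over the smooth curve $B$ — flatness and the relative dimensions because every component is horizontal, $\cD_B \hookrightarrow \cX_B$ a closed embedding, the fibers $p$-stable, and $\cD_B \to B$ relatively $S_1$ by Proposition \ref{prop:flat:limit:is:S1}, applicable since $\tfrac1r\cD_B$ is $\bQ$-Cartier and $(\cX_B, \tfrac{1-\epsilon}{r}\cD_B) \to B$ is locally stable for $0 < \epsilon \ll 1$ — and Koll\'ar's conditions (1) and (2) follow from Proposition \ref{prop:flat:limit:is:S1} together with \cite{HK04}*{Corollary 3.8}, since over a regular one-dimensional base the divisorial sheaves $\cI_{\cD_B}^{[m]}$ and $\omega_{\cX_B/B}^{[m]}$ are flat and restrict to $S_2$ divisorial sheaves on the fibers, so their formation commutes with arbitrary base change. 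This establishes the existence part of the valuative criterion (with the allowed finite extension $B \to C$), hence universal closedness; the uniqueness clause of Theorem \ref{Thm_properness} gives the valuative criterion for separatedness, which together with the finiteness of the automorphism groups established in the remark following the algebraicity theorem yields that $\sF_{n,p,I}$ is separated. Combining the three properties gives properness.

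The main obstacle I anticipate is the verification that the filled-in family satisfies Koll\'ar's conditions, so that it lands in $\sF_{n,p,I}(B)$ rather than merely among the $p$-stable morphisms; this is exactly where the $S_2$/$S_3$ analysis of Proposition \ref{prop:flat:limit:is:S1} is needed, to guarantee that the relevant reflexive powers commute with all base changes. A secondary, more routine point is the reduction of the valuative criterion for a finite-type $\bC$-stack to discrete valuation rings arising from smooth curves, which is what makes Theorem \ref{Thm_properness} directly applicable.
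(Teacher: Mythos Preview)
Your proposal is correct and follows essentially the same route as the paper: reduce to the valuative criterion, invoke Theorem \ref{Thm_properness} for the (unique) $p$-stable morphism over a DVR, and then verify that this morphism is actually an object of $\sF_{n,p,I}$ by checking Koll\'ar's conditions (1) and (2). The one point where you diverge from the paper is the justification of condition~(2): Proposition \ref{prop:flat:limit:is:S1} only treats the ideal sheaves $\cO_\cX(-mD)$, not $\omega_{\cX/B}^{[m]}$, so it does not by itself give the $S_2$/base-change statement for the canonical sheaf; the paper instead cites \cite{kol1}*{Proposition 2.76} directly for condition~(2), which is the clean reference for locally stable families over a DVR. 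Your heuristic (that $K_\cX$ is $\bQ$-Cartier, hence the same $S_3$ argument via \cite{Kol13}*{Theorem 7.20} applies) is valid, but you should either spell that out or cite \cite{kol1}*{Proposition 2.76} rather than folding it into Proposition \ref{prop:flat:limit:is:S1}.
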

\begin{proof}
It suffices to check that it satisfies the valuative criterion for properness. So assume that we have a family of $\bQ$-stable pairs $f_\eta \colon (X;D)\to \eta$ over the generic point $\eta$ of the spectrum of a DVR $R$, and we need to fill in the central fiber up to replacing $\spec(R)$ with a ramified cover of it. Theorem \ref{Thm_properness} guarantees the existence and uniqueness of a $\bQ$-stable morphism $f \colon (\cX;\cD)\to \spec(R)$ extending $f_\eta$, up to a ramified cover of $\spec(R)$. To check that $f$ satisfies condition (K) of Definition \ref{Def:functor} we use Proposition \ref{prop:flat:limit:is:S1} point (1).
\end{proof}

\section{Relative canonical models over reduced base}
\label{section morphism}
Given a pair $(X,D)$ with $K_X+D$ semi-ample and big, one can take its canonical model.
Similarly, from Definition \ref{def families A+}, if one starts with a family of $\bQ$-stable pairs, one can take this canonical model in families.
The goal of this section is to show that, \textit{over a reduced base}, the condition in Definition \ref{def families A+} is a fiberwise condition:
weak families of $\bQ$-stable pairs are actually families of $\bQ$-stable pairs.

\begin{Lemma}\label{Lemma:slc:canonical:model:for:p:pairs}
Fix an integer $n\in \mathbb{N}$, a finite subset $I\subseteq (0,1] \cap \mathbb{Q}$, and a polynomial $p(t) \in \mathbb{Q}[t]$.
Let $r$ denote the index of $I$.
Consider a weak $\bQ$-stable morphism with coefficients in $I$ and polynomial $p(t)$ over a smooth scheme $U$:
$p \colon (\cX;\cD)\to U$.
Then, there is a stable family of pairs $(\cY,\frac{1}{r}\cD_\cY)\to U$ such that:
\begin{enumerate}
    \item there is a contraction $\pi \colon \cX \to \cY$; and
    \item we have $\pi^* (\K \cY/U. + \frac{1}{r}\cD_\cY) = \K \cX /U. + \frac{1}{r}\cD$.
\end{enumerate}
In particular, $p \colon (\cX;\cD)\to U$ is a $\bQ$-stable morphism.
\end{Lemma}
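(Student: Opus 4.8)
The plan is to take $(\cY,\tfrac1r\cD_\cY)$ to be the relative ample model of $(\cX,\tfrac1r\cD)$ over $U$, and then to verify that it is a stable family of pairs and that the induced contraction $\pi$ is crepant. First, by Proposition \ref{prop_stable_family_p-pairs}, for $0<\epsilon\ll 1$ rational the family $(\cX,\tfrac{1-\epsilon}{r}\cD)\to U$ is stable; in particular $\cD$ is $\qq$-Cartier, $K_{\cX/U}+\tfrac{1-\epsilon}{r}\cD$ is relatively ample, and hence $K_{\cX/U}+\tfrac1r\cD$ is relatively nef and big, and $\qq$-Cartier, so by \cite{kol1}*{Theorem-Definition 4.45} the family $(\cX,\tfrac1r\cD)\to U$ is itself locally stable.

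I would treat first the case in which the fibers of $\cX\to U$ are normal, so that $(\cX,\tfrac1r\cD)$ is lc by inversion of adjunction. Each fiber $(\cX_u,\tfrac1r\cD_u)$ is a $p$-stable pair, so $K_{\cX_u}+\tfrac1r\cD_u$ is nef and big, and by assumption (A) together with \cite{HX13}*{Theorem 1.1} and \cite{HMX}*{Theorem 1.2} it is semi-ample; thus every fiber has a good (indeed ample) model, reached by a birational contraction by Lemma \ref{ample model p-pair}. Arguing then as in the proofs of Proposition \ref{prop:nef:open} and Proposition \ref{properness_lc_case} (passing to a fiberwise log resolution and invoking \cite{HMX18}*{Theorem 1.9.1}), one obtains that $(\cX,\tfrac1r\cD)\to U$ admits a relative good minimal model and a relative ample model $\pi\colon\cX\to\cY$ which computes the ample model fiberwise; since this relative ample model is fiberwise determined, it is unique, so even if the argument a priori requires stratifying $U$, the pieces glue to a model over all of $U$. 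As $K_{\cX/U}+\tfrac1r\cD$ is already relatively nef, $\pi$ is a morphism, and setting $\cD_\cY\coloneqq\pi_\ast\cD$ gives (1). For (2): $\pi$ is birational (birational on each fiber by Lemma \ref{ample model p-pair}), $K_{\cX/U}+\tfrac1r\cD\sim_{\qq,\cY}0$ by construction of the ample model, and $\pi_\ast(K_{\cX/U}+\tfrac1r\cD)=K_{\cY/U}+\tfrac1r\cD_\cY$; hence $(K_{\cX/U}+\tfrac1r\cD)-\pi^\ast(K_{\cY/U}+\tfrac1r\cD_\cY)$ is $\pi$-exceptional and $\equiv_\cY 0$, so it vanishes by the negativity lemma \cite{KM98}*{Lemma 3.39}. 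Finally, $(\cY,\tfrac1r\cD_\cY)\to U$ is a stable family of pairs: $\cY\to U$ is flat with the expected fibers because the relative ample model is the relative $\Proj$ of a sheaf of algebras whose formation commutes with base change, $K_{\cY/U}+\tfrac1r\cD_\cY$ is relatively ample $\qq$-Cartier by construction, and well-definedness and local stability follow from \cite{kol1}*{Chapter 4} since each $(\cY_u,\tfrac1r\cD_{\cY,u})$ is a stable pair.

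For the general, possibly non-normal, case, let $n\colon(\cX^\nu,\tfrac1r\cD^\nu+\Delta^\nu)\to(\cX,\tfrac1r\cD)$ be the normalization, with $\Delta^\nu$ the reduced conductor; by \cite{Kol13}*{Corollary 5.39} the divisor $\cD^\nu$ is $\qq$-Cartier, so $(\cX^\nu,\Delta^\nu;\tfrac1r\cD^\nu)\to U$ is a $p$-stable morphism with constant part $\Delta^\nu$ (the polynomial on each connected component being determined by $p(t)$). The argument above applies verbatim with the extra constant divisor $\Delta^\nu$, producing a relative ample model $\pi^\nu\colon\cX^\nu\to\cY^\nu$, with $(\cY^\nu,\tfrac1r\cD^\nu_\cY+\Delta^\nu_\cY)\to U$ a stable family, such that $(\pi^\nu)^\ast(K_{\cY^\nu/U}+\tfrac1r\cD^\nu_\cY+\Delta^\nu_\cY)=K_{\cX^\nu/U}+\tfrac1r\cD^\nu+\Delta^\nu$. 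Since $\pi^\nu$ is crepant, the conductor involution descends to $\cY^\nu$, and by Kollár's gluing theory (\cite{Kol13}*{Chapter 5}, \cite{kol1}*{Chapter 9}) the quotient $(\cY,\tfrac1r\cD_\cY)$ by the induced finite equivalence relation exists and is slc, with $\pi^\nu$ descending to a contraction $\pi\colon\cX\to\cY$; applying Lemma \ref{lemma:canonical:model:of:slc:pairs:has:connected:fibers} fiberwise then yields that the fibers of $\pi$ are connected and that $\pi^\ast(K_{\cY/U}+\tfrac1r\cD_\cY)=K_{\cX/U}+\tfrac1r\cD$, and $(\cY,\tfrac1r\cD_\cY)\to U$ is a stable family as in the normal case. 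The step I expect to be the main obstacle is precisely this passage to families in the gluing: one must know that the relative ample model of the normalization is compatible with the conductor involution and that the resulting slc quotient is again a well-behaved family over $U$ — equivalently, that the normalization of $\cY$ recovers $\cY^\nu$ fiber by fiber, so that crepancy upstairs descends to crepancy for $\pi$. This is what Kollár's gluing theory and Lemma \ref{lemma:canonical:model:of:slc:pairs:has:connected:fibers} are tailored to, but checking the hypotheses of that lemma in family — the bijections on components, birationality, connectedness of the fibers of the normalization maps, and the crepancy of $\pi^\nu$ — is where the real work lies.
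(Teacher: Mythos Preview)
Your strategy matches the paper's: take the relative ample model in the normal case, then in the slc case normalize, take the relative ample model upstairs, descend the conductor involution, glue via Koll\'ar's theory, and finish with Lemma \ref{lemma:canonical:model:of:slc:pairs:has:connected:fibers}. Two points need sharpening. In the normal case the paper does not stratify and re-glue: since $U$ is smooth, $(\cX,\tfrac1r\cD)$ is an honest lc pair by inversion of adjunction, and one applies Lemma \ref{lemma minimal models} directly (its hypothesis that every lc center dominates $U$ holds by local stability). The resulting relative ample model is then a stable family by \cite{kol1}*{Corollary 4.86}. Your stratify-then-glue maneuver is neither needed nor, as phrased, obviously rigorous.

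The genuine gap is the step you flag yourself but do not resolve: you assert that crepancy of $\pi^\nu$ makes the conductor involution descend, but that is precisely what must be proved. The paper's argument runs as follows. Let $f\colon\Delta^n\to\Delta_\cY^n$ be the induced map on normalized conductors; no component is contracted, by Lemma \ref{ample model p-pair}. By adjunction a curve $C\subset\Delta^n$ is contracted by $f$ if and only if $(K_{\Delta^n}+\operatorname{Diff}_{\Delta^n}(\tfrac1r\cD'+\Delta)).C=0$. Since the involution $\tau$ preserves the different, it therefore preserves the set of contracted curves and descends to an involution $\tau_\cY$ on $\Delta_\cY^n$. That $\tau_\cY$ again preserves the different follows because $f$ is an isomorphism generically along every divisor not in $\supp(\cD')$ (again Lemma \ref{ample model p-pair}), so $f_*\operatorname{Diff}_{\Delta^n}=\operatorname{Diff}_{\Delta^n_\cY}$. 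Only after this can one glue and obtain $\cX\to\cY$ via the universal property of the geometric quotient \cite{Kol13}*{Theorem 5.32}, and then Lemma \ref{lemma:canonical:model:of:slc:pairs:has:connected:fibers} (whose hypotheses are now visibly in place, thanks to Lemma \ref{ample model p-pair}) delivers both connectedness of the fibers of $\pi$ and the crepancy $\pi^*(K_{\cY/U}+\tfrac1r\cD_\cY)=K_{\cX/U}+\tfrac1r\cD$.
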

\begin{proof}
By Proposition \ref{prop_stable_family_p-pairs} and the fact that $U$ is smooth, it follows that $(\cX, \frac{1}{r}\cD)$ is a pair.
Furthermore, by inversion of adjunction, this pair is semi-log canonical.

First, we assume that $\cX$ is normal.
Then, by Lemma \ref{lemma minimal models} and our assumptions, the canonical model of $(\cX,\frac{1}{r}\cD)$ over $U$ exists.
Since $U$ is smooth, this canonical model gives a family of stable pairs by \cite{kol_new}*{Corollary 4.57}, and the result follows from how canonical models are constructed.

We now treat the case where $\cX$ is not normal.
First, we normalize $n_\cX \colon \cX^n\to \cX$ to get $(\cX^n,\Delta;\cD') $.
As argued in the previous case, we can construct the canonical model of $(\cX^n,\Delta;\cD') $ over $U$.
As above, this gives a family of stable pairs $(\cY',\frac{1}{r}\cD_\cY'+\Delta_\cY)\to U$.
From Koll\'ar's gluing theory (see \cite{Kol13}*{Ch. 5}), there is an involution $\tau \colon \Delta^n \to \Delta^n$ that fixes the different.
We first show that this involution descends onto $\Delta_\cY^n$.

Recall that, by Lemma \ref{ample model p-pair}, the map $\cX^n\to \cY'$ does not contract any component of $\Delta$.
In particular, for every irreducible component $F\subseteq \Delta^n$, there is an irreducible component $F_Y\subseteq \Delta_\cY^n$ birational to it, and we have the following diagram:

$$\xymatrix{\Delta^n\ar[r]^f \ar[d] & \Delta^n_\cY \ar[d] \\ \cX^n \ar[r]^{\pi_n} & \cY'.}$$
Observe now that
\begin{center}
    {\bf ($\ast$)} {\it a curve $C \subset \Delta^n$ gets contracted by $f$ if and only if $(K_{\Delta^n} + \operatorname{Diff}_{\Delta^n}(\frac{1}{r}\cD ' + \Delta)).C = 0$.}
\end{center}
In particular, since the involution $\tau$ preserves the different, it preserves all the curves that are contracted by $f$.
Hence, the involution descends to an involution $\tau_\cY$ on $\Delta_\cY^n$.

We prove that $\tau_\cY$ preserves the different.
Indeed, by Lemma \ref{ample model p-pair}, the only divisors contracted by $f$ are contained in $\supp(\cD')$, so the morphism $f$ is an isomorphism generically around each divisor not contained in $\supp(\cD)$.
In particular, since the computation of the different is local,
\[
f_*(\operatorname{Diff}_{\Delta^n}(\frac{1}{r}\cD' + \Delta)) = \operatorname{Diff}_{\Delta^n_\cY}(\frac{1}{r}\cD'_\cY + \Delta_\cY).
\]
Since $\tau$ preserves the different on $\Delta^n$, $\tau_\cY$ preserves the different on $\Delta_\cY^n$.

Then from \cite{Kol13}*{Ch. 5}, we can glue $\cY'$ to get an semi-log canonical pair $(\cY,\frac{1}{r}\cD_\cY)$, and let $n_\cY \colon \cY'\to \cY$ be the normalization.
Now, recall that $\cY$ is a geometric quotient (see \cite{Kol13}*{Theorem 5.32} and the proof of \cite{Kol13}*{Corollary 5.33}), so for any morphism $\phi \colon \cY^n\to Z$ such that $\phi|_{\Delta_\cY^n} = \tau \circ \phi|_{\Delta_\cY^n} $, there is a unique morphism $\cY\to Z$ which fits in the obvious commutative diagram.
Therefore, we have a morphism $\cY\to U$, and applying this result to $\cX^n$ and $\cX$, we obtain a morphism $\pi \colon \cX\to \cY$.

To show (1) it suffices to observe the following two exact sequence: 
\begin{equation} \label{eq_ses_normalization}
0\to \cO_\cX \to (n_{\cX})_*\cO_{\cX^n} \xrightarrow{g_\cX} (n_{\cX})_*\cO_{\Delta^n}.
\end{equation}
Observe that $\pi\circ n_\cX = n_\cY\circ\pi_n $ and both $\pi_n$ and $\Delta^n\to \Delta^n_\cY$ is birational. So $(\pi_n)_*\cO_{\cX^n} =\cO_{\cY'} $ and $(\pi\circ n_\cX)_* \cO_{\Delta^n} = \cO_{\Delta^n_\cY}.$ Therefore pushing forward the sequence \eqref{eq_ses_normalization} via $\pi$ we have
$$
0\to \pi_*\cO_\cX \to (n_{\cY})_*\cO_{\cY'} \xrightarrow{g_\cX} (n_{\cX})_*\cO_{\Delta^n_\cY}.
$$
In particular, $\pi_*\cO_{\cX}=\cO_{\cY}$. Similarly we can tensor the sequence above by $\pi^*(K_{\cY/U} + \frac{1}{r}\cD)$ to deduce (2).
\end{proof}

\begin{theorem}\label{thm_morphism_our_stack_to_kollars_one_for_reduced_bases}
Fix an integer $n\in \mathbb{N}$ and a finite subset $I\subseteq (0,1] \cap \mathbb{Q}$.
Let $r$ denote the index of $I$.
Assume that the fibers of $p$ have an canonical model.
Let $(\cX;\cD)\to B$ be a weak $\bQ$-stable morphism of dimension $n$, with coefficients in $I$, and polynomial $p(t)$, over a reduced connected base $B$, and assume that there is an open dense subscheme $U\subseteq B$ with:
\begin{enumerate}
    \item a stable family of pairs $(\cY_U,\frac{1}{r}\cD_{\cY,U})\to U$ of relative dimension $n$; and
    \item a contraction $\pi_U \colon \cX_U\to \cY_U $ such that $\pi^*(K_{\cY_U/U} + \frac{1}{r}\cD_{\cY,U}) = K_{\cX_U/U} + \frac{1}{r}\cD_{U}$.
\end{enumerate}
Then, there there is an $m>0$ such that for every $d$ and every $b\in B$ we have $$p_*(\cO_{\cX}(md(K_{\cX/B}+\frac{1}{r}\cD)))\otimes k(b) = H^0(\cX_b, \cO_{\cX_b}(md(K_{\cX_b}+\frac{1}{r}\cD_b))) .$$

Moreover, if we define $\cY \coloneqq \operatorname{Proj}(\bigoplus_d p_*\cO_{\cX}(md(K_{\cX/B}+\frac{1}{r}\cD)))$, then:
\begin{itemize}
    \item there is a family of divisors $\cD_\cY$ such that the pair $q \colon  (\cY,\frac{1}{r}\cD_\cY) \to B $ is a stable family extending $q_U$;
    \item there is a contraction $\pi \colon \cX\to \cY$ over $B$ that extends $\pi_U$; and
    \item $\pi^*(K_{\cY/B} + \frac{1}{r}\cD_{\cY}) = K_{\cX/B} + \frac{1}{r}\cD$.
\end{itemize}
In particular, $(\mathcal X, \mathcal{D}) \rar B$ is a $\bQ$-stable morphism.
\end{theorem}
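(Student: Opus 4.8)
The plan is to construct the relative ample model $\cY\to B$ together with the contraction $\pi$ by a stratification argument, and only afterwards to identify it with the relative $\operatorname{Proj}$ and extract the base-change statement. We may assume $B$ is affine. By Theorem \ref{thm_boundedness} the family $(\cX;\cD)\to B$ is bounded; hence, by Remark \ref{remark_for_m_big_enough_kollar_comndition_guarantees_cartier} and boundedness of the stable models (Proposition \ref{prop_stable_family_p-pairs}), we may fix a single integer $m>0$, divisible by $r$, such that $\cL\coloneqq \omega_{\cX/B}^{[m]}\otimes \cI_\cD^{[-m/r]}=\cO_\cX(m(\K \cX/B.+\tfrac1r\cD))$ is a line bundle whose restriction to every fiber defines the ample model of that fiber, and such that, on every fiberwise ample model, the induced ample line bundle is very ample with all its positive powers having vanishing higher cohomology.

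By Proposition \ref{prop_stable_family_p-pairs}, $(\cX,\tfrac1r\cD)\to B$ is a locally stable family of pairs with $\cD$ being $\qq$-Cartier, and $\K \cX/B.+\tfrac1r\cD$ is relatively nef, being a limit of ample divisors on each fiber. Stratify $B$ into smooth locally closed subschemes $B_j$. Over each $B_j$, Lemma \ref{Lemma:slc:canonical:model:for:p:pairs} (whose hypotheses hold by assumption (A)) produces a stable family $(\cY_j,\tfrac1r\cD_{\cY_j})\to B_j$ together with a contraction $\pi_j\colon \cX_{B_j}\to \cY_j$ satisfying $\pi_j^*(\K \cY_j/B_j.+\tfrac1r\cD_{\cY_j})=\K \cX_{B_j}/B_j.+\tfrac1r\cD$; moreover this construction computes the ample model fiberwise and commutes with further base change, and $\mathrm{Ex}(\pi_j)$ is contained in the support of $\cD$ by Lemma \ref{ample model p-pair}. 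By uniqueness of relative ample models, these stratumwise families agree on overlaps of closures, and over the stratum meeting $U$ they restrict to the given $(\cY_U,\tfrac1r\cD_{\cY,U})$. Invoking Koll\'ar's theory of families of stable pairs over reduced bases (\cite{kol1}, together with the gluing of the divisorial part as in \cite{Kol13}*{Chapter 5}), the families $(\cY_j,\tfrac1r\cD_{\cY_j})$ glue to a single stable family $q\colon (\cY,\tfrac1r\cD_\cY)\to B$ extending $(\cY_U,\tfrac1r\cD_{\cY,U})$, together with a contraction $\pi\colon \cX\to \cY$ over $B$ extending $\pi_U$ and all the $\pi_j$ and satisfying $\pi^*(\K \cY/B.+\tfrac1r\cD_\cY)=\K \cX/B.+\tfrac1r\cD$.

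It remains to compute the pushforwards and identify $\cY$. Set $N\coloneqq \cO_\cY(m(\K \cY/B.+\tfrac1r\cD_\cY))$, a line bundle that is relatively very ample by the choice of $m$. Since $\pi$ is a contraction, $\pi_*\cO_\cX=\cO_\cY$, and since each $\pi_b$ is a birational contraction from a reduced scheme to a seminormal one, Lemma \ref{lemma:connected:fivers:vs:cohomol:connected:fibers} gives $\pi_{b*}\cO_{\cX_b}=\cO_{\cY_b}$. As $\cL^{\otimes d}=\pi^*N^{\otimes d}$ and $\cL_b^{\otimes d}=\pi_b^*N_b^{\otimes d}$, the projection formula yields $p_*\cL^{\otimes d}=q_*N^{\otimes d}$ and $H^0(\cX_b,\cL_b^{\otimes d})=H^0(\cY_b,N_b^{\otimes d})$ for all $d$ and $b$. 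Because $q\colon (\cY,\tfrac1r\cD_\cY)\to B$ is a stable family and $N$ is relatively very ample with the uniform $m$, we have $R^iq_*N^{\otimes d}=0$ for $i>0$ and $d\ge 1$, so $q_*N^{\otimes d}$ is locally free and commutes with base change; hence $p_*\cL^{\otimes d}\otimes k(b)=q_*N^{\otimes d}\otimes k(b)=H^0(\cY_b,N_b^{\otimes d})=H^0(\cX_b,\cL_b^{\otimes d})$, which is the claimed base-change statement. Finally $\bigoplus_{d\ge 0}p_*\cL^{\otimes d}=\bigoplus_{d\ge 0}q_*N^{\otimes d}$ is a finitely generated sheaf of $\cO_B$-algebras because $N$ is relatively ample, and since $N$ is relatively very ample its relative $\operatorname{Proj}$ is $\cY$; the assertions about $\cD_\cY$, $\pi$, and crepancy were established above.

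The main obstacle is the middle step: deducing from the stratumwise models of Lemma \ref{Lemma:slc:canonical:model:for:p:pairs} a genuine family of stable pairs over the reduced base $B$, which amounts to checking that the glued divisorial part $\cD_\cY$ satisfies Koll\'ar's condition and that the $\pi_j$ assemble into a global morphism; once $(\cY,\tfrac1r\cD_\cY)\to B$ is known to be a stable family, the base-change assertion and the identification with the relative $\operatorname{Proj}$ follow formally from the projection formula and Serre vanishing. One should also take care that a single $m$ can be chosen to work simultaneously for $\cX$ and for all of its ample models, which is where boundedness of both sides (Theorem \ref{thm_boundedness} and Proposition \ref{prop_stable_family_p-pairs}) is used.
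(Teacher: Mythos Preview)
Your argument has a genuine gap at exactly the point you flag yourself: the ``gluing'' of the stratumwise ample models $(\cY_j,\tfrac1r\cD_{\cY_j})\to B_j$ into a single family over $B$.  There is no result in \cite{kol1} or \cite{Kol13} that takes stable families over a locally closed stratification and produces a stable family over the union; the strata are disjoint, so ``agree on overlaps of closures'' has no content (the families are not defined on the closures), and there is no scheme-theoretic mechanism to assemble the $\cY_j$ into a flat $B$-scheme.  Likewise, the morphisms $\pi_j$ live over disjoint pieces and do not automatically patch to a morphism over $B$.  Your final paragraph correctly identifies this as ``the main obstacle'' but then leaves it unresolved, so as written the proof is incomplete.

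The paper's proof avoids this problem by reversing the logical order.  It never tries to glue stratumwise models.  Instead it first proves the base-change statement directly: the key input is the \emph{curve case} (paper's Step~2), where Lemma \ref{Lemma:slc:canonical:model:for:p:pairs} is applied to a smooth curve $C\to B$ whose generic point lands in $U$, producing a stable family over $C$ on which constancy of $h^0(\cX_b,\cL_b^{\otimes d})$ is verified.  Since any $b\in B\setminus U$ lies on such a curve, constancy follows on all of $B$ (Step~3).  Then Grauert/cohomology-and-base-change gives that $p_*\cL^{\otimes d}$ is locally free and commutes with base change, the sheaf of algebras $\bigoplus_d p_*\cL^{\otimes d}$ is finitely generated, and $\cY$ is constructed \emph{globally} as the relative $\operatorname{Proj}$ (Step~4).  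The contraction $\pi$ then exists because $\cL$ is relatively semi-ample, and the remaining properties of $\cD_\cY$ and the crepancy are checked directly (Steps~5--7).  In short: the paper uses Lemma \ref{Lemma:slc:canonical:model:for:p:pairs} only over curves, as a tool to prove constancy of $h^0$, and then builds $\cY$ from the algebra of sections; your proposal tries to use it over all strata to build $\cY$ first, which would require a gluing step that is not available.
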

\begin{Remark}
Observe that since $K_{\cY_U/U} + \frac{1}{r}\cD_{\cY,U}$ is $\bQ$-Cartier, we can define its pull-back as a $\bQ$-Cartier divisor. 
\end{Remark}

\begin{proof}
We proceed in several steps.

{\bf Step 1:} In this step, we make some preliminary considerations and set some notation.

By Proposition \ref{prop_stable_family_p-pairs}, we have that $\K \cX/B. + \frac{1}{r}\cD$ and $\cD$ are $\qq$-Cartier.
Then, observe that for every $s,t\in U$, the volumes of the pairs $(\cY_s,\frac{1}{r}\cD_{\cY,s})$ and $(\cY_t,\frac{1}{r}\cD_{\cY,t})$ agree.
Indeed, since $(\cX;\cD)\to B$ is a weak $\bQ$-stable morphism, $\K \cX_s.+ \frac{1}{r}\cD_s$ and $\K \cX_t.+ \frac{1}{r}\cD_t$ are nef.
Thus, their volumes are computed by the $n$-fold self-intersection, which is independent of $s,t \in S$.
But from condition (2) the morphisms $\pi_s \colon \cX_s\to \cY_s$ and $\pi_t \colon \cX_t\to \cY_t$ have connected
fibers, and we have $\pi_s^*(\K \cY_s. + \frac{1}{r}\cD \subs \cY,s.)=\K \cX_s. +\frac{1}{r} \cD_s$ and $\pi_t^*(\K \cY_t. +\frac{1}{r} \cD \subs \cY,t.)=\K \cX_t. +\frac{1}{r} \cD_t$.
Then, by Remark \ref{remark:cohom:connected:implies:one:can:take:global:sections:after:pull:back}, the volumes of $\K \cY_s. + \frac{1}{r}\cD \subs \cY,s.$ (resp. $\K \cY_t. + \frac{1}{r}\cD \subs \cY,t.$) and $\K \cX_s. +\frac{1}{r} \cD_s$ (resp. $\K \cX_t. + \frac{1}{r}\cD_t$) agree.
Let then $v$ be the volume of any fiber of $q_U$ and let $k$ be a natural number such that $r$ divides $k$ and, for every stable pair $(Y,D)$ of dimension $n$, volume $v$ and coefficients in $I$, the line bundle $\cO_Y(k(K_Y + D))$ is very ample and the higher cohomologies of all of its natural multiples vanish.
Notice that $k$ exists by \cite{HMX18}*{Theorem 1.2.2}.
Then, we set $\cL \coloneqq  \O \cX. (k(K_{\cX/B} + \frac{1}{r}\cD))$.
Up to replacing $k$ with a multiple, we may further assume that $\cL$ is Cartier.

{\bf Step 2:} In this step, we show that the theorem holds if $B$ is a smooth curve.

From Lemma \ref{Lemma:slc:canonical:model:for:p:pairs}, we can construct a family of stable pairs $(\cZ,\frac{1}{r}\cD_\cZ)\to B$ with a contraction $\phi \colon \cX\to \cZ$.
First, observe that $(\cZ_U,\frac{1}{r}\cD_{\cZ,U})\cong (\cY_U,\frac{1}{r}\cD_{\cY,U})$ over $U$.
Indeed, consider the reflexive sheaves $\cL _\cZ \coloneqq \O \cZ. (k(\K \cZ. + \frac{1}{r}\cD _\cZ))$ and $\cL _{\cY,U} \coloneqq \O \cY_U. (k(\K \cY_U. + \frac{1}{r}\cD _{\cY,U}))$.
By construction, the fibers of $(\cZ,\frac{1}{r}\cD_\cZ) \rar B$ and $(\cY_U,\frac{1}{r}\cD \subs \cY,U.) \rar U$ belong to the moduli problem of stable pairs with volume $v$ and coefficients in the finite set $I$.
Thus, by \cite{kol_new}*{Theorem 5.8.(4)} and the choice of $k$, $\cL _\cZ$ and $\cL _{\cY,U}$ are line bundles.
In order to apply \cite{kol_new}*{Theorem 5.8.(4)}, notice that we know that $\cL _\cZ$ and $\cL _{\cY,U}$ are line bundles away from the exceptional locus of $\phi$ and $\pi_U$, which are big open subsets restricting to big open subsets fiberwise.

Since $\pi_U$ and $\phi_U$ have connected fibers, we have
\[
\pi_U^*\cL_{\cY,U} \cong \cL_U \cong \phi^*_U\cL_{\cZ,U}.
\]
But both $\pi_U$ and $\phi_U$ have connected fibers so, by the projection formula, for every $m \geq 1$, we have
\[
H^0(\cY_U,\cL_{\cY,U}^{\otimes m}) = H^0(\cX_U,\pi^*_U\cL_{\cY,U}^{\otimes m}) =H^0(\cX_U,\cL_{U}^{\otimes m}) = H^0(\cX_U,\phi^*_U\cL_{\cZ,U}^{\otimes m})  =  H^0(\cZ_U,\cL_{\cZ,U}^{\otimes m}).
\]
But $\cL_{\cZ,U}$ and $\cL_{\cY,U}$ are ample over $U$, so $\cZ_U$ and $\cY_U$ are isomorphic as $U$-schemes, as they are the relative Proj of the same sheaf of graded algebras.
In particular, the three final claims of the theorem hold if we consider the family $(\cZ,\frac{1}{r}\cD_\cZ)$.

Thus, we are left with proving that for every $d$ and every $b\in B$, we have $p_*(\cL^{\otimes d})\otimes k(b) = H^0(\cX_b, \cL^{\otimes d}_b) $.
But since $\phi^*\cL_\cZ = \cL$, we have $\phi_b^*(\cL_{\cZ,b}) = \cL_{b}$.
Thus, by Remark \ref{remark:cohom:connected:implies:one:can:take:global:sections:after:pull:back}, we have $h^0(\cX_b,\cL _b^{\otimes m}) = h^0(\cZ_b,\cL \subs \cZ,b.^{\otimes m})$ for all $m \geq 1$.
Then, the latter is locally constant from the assumptions on $k$, as by the vanishing of the higher cohomologies we have $h^0(\cZ_b,\cL \subs \cZ,b.^{\otimes m})= \chi(\cZ_b,\cL \subs \cZ,b.^{\otimes m})$, and the Euler characteristic of $\cL \subs \cZ,b.^{\otimes m}$ is independent of $b \in B$. Now the desired statement follows from \cite{Mum74}*{Corollary 2, page 50}.

{\bf Step 3:} In this step, we return to the general case and we show that for every $m$, the morphism $B\ni b\mapsto h^0(\cX_b,\cL|_{\cX_b}^{\otimes m})$ is constant and the algebra $\bigoplus_m H^0(\cX_b,\cL|_{\cX_b}^{\otimes m})$ is finitely generated.

Observe that the claim holds for every $p\in U$. Indeed, the following diagram commutes:
$$\xymatrix{\cX_b \ar[r] \ar[d]_{\pi_b} & \cX_U \ar[d] \\ \cY_b \ar[r] & \cY_U}$$
which from point (2) guarantees $\pi_b^*(\cL_{\cY,b}) = \cL_{b}$.
Then, in this case, we can conclude as at the end of Step 2.
Furthermore, the finite generation follows from the fact that $\cL_{b}$ is very ample and $\cY_b$ is the Proj of its associated graded ring.

Now, we treat the case when $b \not \in U$.
Consider a smooth curve $C$ with a map $C\to B$.
Assume that the generic point of $C$ maps into $U$.
Notice that any point $b \in B$ is contained in the image of such a curve.
Then, for any point $s\in C$, we have the following diagram:
$$\xymatrix{\cX_s\ar[r] \ar[d] & \cX_C \ar[r] \ar[d] & \cX \ar[d] \\ \{s\}\ar[r] & C \ar[r] & B.}$$
Since both squares are fibered squares, the big rectangle is a fibered square.
In particular, since we want to prove that $B\ni b\mapsto h^0(\cX_b,\cL|_{\cX_b}^{\otimes m})$
is constant, and since we know it is constant as long as $b\in U$, it suffices to check that for every such $C$ the functions $C\ni s\mapsto h^0(\cX_s,\cL|_{\cX_s}^{\otimes m})$ are constant.
Now, this follows by Step 2.
Similarly, the finite generation of $\bigoplus_m H^0(\cX_s,\cL|_{\cX_s}^{\otimes m})$ follows from Step 2.

{\bf Step 4:} In this step, we construct the model $\cY$ and the morphism $\pi$.

By Step 3 and cohomology and base change (see \cite{Mum74}), the sheaves $p_*(\cL^{\otimes m})$ commute with the restriction to points. In particular, the algebra $\mathcal{A}  \coloneqq  \bigoplus_{m} p_*(\cL^{\otimes m})$ is finitely generated since it is finitely generated when restricted to every point $b\in B$.
So we can consider $\cY \coloneqq \Proj_B(\cA)$.

As observed in Step 3, the pluri-sections of $\cL|_{\cX b}$ are deformation invariant.
As $\cL|_{\cX b}$ is semi-ample for every $b \in B$ by our assumptions, it follows that $\cL$ is relatively semi-ample.
In particular, we have a morphism $\cX \rar \cY$.
Furthermore, this implies the equality $\cY_b=\mathrm{Proj}(\bigoplus_m H^0(\cX_b,\cL_b^{\otimes m}))$ for every $b \in B$.

As already discussed, this construction commutes with base change.
In particular, for every $b\in B$, for checking properties of $\cY_b$ we can consider a smooth curve $C\to B$ which sends the generic point to $U$ and the special one to $b$, and first pull back $\cY$ to $C$ and then restrict it to $b$:
$$\xymatrix{\cY_b\ar[r] \ar[d] & \cY_C \ar[r] \ar[d] & \cY \ar[d] \\ \{b\}\ar[r] & C \ar[r] & B.}$$
The advantage is that now we can apply the results of Step 2 to $\cY_C$.

Since we have $\cL_U \cong \pi_U^* \cL_{\cY,U}$ and $\cL_{\cY,U}$ is relatively ample over $U$, it follows that $\cY \times_B U = \cY_U$ and that $\pi$ extends $\pi_U$.
Furthermore, since $B$ is reduced, the construction commutes with base change, and each fiber is reduced, it follows that $\cY$ is reduced.

{\bf Step 5:} In this step, we show that $\pi$ is a contraction and we construct the divisor $\cD_\cY$.

We first prove that $\pi$ is a contraction. We denote by $V\subseteq \cY$ the locus where $\pi^{-1}(V)\to V$ is an isomorphism. Then
it follows from Lemma \ref{ample model p-pair}:
\begin{center}
    {\bf ($\ast \ast$)} {\it for every fiber $\cY_b$, the complement of $V_b \coloneqq V \cap \cY_b$ has codimension at least 2 in $\cY_b$ and it does not contain any irreducible component of the conductor of $\cY_b$.}
\end{center}
Consider now the inclusion $i \colon \pi^{-1}(V)\to \cX$, which induces the injective map $0\to \cO_\cX \to i_*\cO_{\pi^{-1}(V)}$. We can push this sequence forward via $\pi$ and we obtain $0\to \pi_*\cO_\cX \to \pi_*i_*\cO_{\pi^{-1}(V)}$. But $\pi\circ i  \colon \pi^{-1}(V)\to \cY$ is the inclusion $j \colon  V\hookrightarrow Y$, so $\pi_*i_*\cO_{\pi^{-1}(V)} = j_*j^*\cO_\cY$ is reflexive from \cite{HK04}*{Corollary 3.7}, and it is isomorphic to $\cO_{\cY}$ from \cite{HK04}*{Proposition 3.6.2}. In particular, this gives an injective map $\pi_*\cO_\cX\to \cO_\cY$. One can check that this is the inverse of the canonical morphism $\cO_\cY\to \pi_*\cO_\cX$, so in particular the latter is an isomorphism.

Consider the ideal sheaf $\cI$ of $\cD$, and consider the inclusion
\[
0\to \cI \to \cO_\cX.
\]
Then, as $\pi$ is a contraction, if we push it forward via $\pi$, we get
\[0\to \pi_*\cI \to \cO_\cY.
\]
In particular, $\pi_* \cI$ is an ideal sheaf on $\cY$.
We denote by $\cS\subseteq \cY$ the closed subscheme with ideal sheaf $\pi_*\cI$.

A priori, $\cS$ may not be pure dimensional, however, consider the intersection between $\pi^{-1}(V)$ and the locus in $\cX$ where $\cD$ is $\bQ$-Cartier: we denote this locus with $W$.
Observe that, since $\pi^{-1}(V)\to V$ is an isomorphism, we can identify $W$ with a subset of $\cY$.
Moreover, since $\cD$ is Cartier on codimension one point of $\cX$ and $V$ is a big open subset, the locus $W$ contains all the codimension one points of $\cY_b$ for every $b$.
Then, we consider $\cS \cap W$, and we define $\cD_\cY$ to be the closure of $\cS \cap W$ in $\cY$.

{\bf Step 6:} In this step, we show that $q \colon (\cY,\frac{1}{r}\cD_\cY) \rar B$ is a well defined family of pairs.

By construction, it is immediate that $\cD_\cY$ is a relative Mumford divisor in the sense of \cite{kol19s}*{Definition 1}. Indeed, the three conditions of \cite{kol19s}*{Definition 1} are now clear, since they hold on $\cX$.

Thus, we just need to check that $q \colon \cY\to B$ is flat.
By pulling back $\cY\to B$ along a smooth curve through $U$, it follows from Step 1 that all the fibers of $q$ are reduced and equidimensional.
Thus, $q$ is an equidimensional morphism with reduced fibers over a reduced base, so \cite{kol_new}*{Lemma 10.58} applies.

{\bf Step 7:} In this step, we show that $q \colon (\cY,\frac{1}{r}\cD_\cY) \rar B$ is a stable family of pairs and $\pi^*(K_{\cY/B} + \frac{1}{r}\cD_{\cY}) = K_{\cX/B} + \frac{1}{r}\cD$.

Since $q \colon (\cY,\frac{1}{r}\cD_\cY) \rar B$ is a well defined family of pairs and its fibers belong to a prescribed moduli problem for stable pairs, we can argue as in the proof of Proposition \ref{prop_stable_family_p-pairs} to conclude that $K_{\cY/B} + \frac{1}{r}\cD_{\cY}$ is $\mathbb Q$-Cartier.
In particular, $q \colon (\cY,\frac{1}{r}\cD_\cY) \rar B$ is a stable family of pairs.

By construction, we have that $K_{\cY/B} + \frac{1}{r}\cD_{\cY} =\pi_*( K_{\cX/B} + \frac{1}{r}\cD)$.
Furthermore, we have that the equality $\pi^*(K_{\cY/B} + \frac{1}{r}\cD_{\cY}) = K_{\cX/B} + \frac{1}{r}\cD$ holds over $U$.
Then, by construction, all the exceptional divisors of $\pi$ dominate $B$, as they are contained in the support of $\cD$.
Thus, as $K_{\cY/B} + \frac{1}{r}\cD_{\cY}$ is $\mathbb Q$-Cartier and so $\pi^*(K_{\cY/B} + \frac{1}{r}\cD_{\cY})$ is well defined, it follows that $\pi^*(K_{\cY/B} + \frac{1}{r}\cD_{\cY}) = K_{\cX/B} + \frac{1}{r}\cD$.
\end{proof}

\begin{Lemma}\label{lemma:finitedness:paper:zsolt}
With the notation and assumptions of Theorem \ref{thm_morphism_our_stack_to_kollars_one_for_reduced_bases}, assume that $B$ is an affine curve and that there is a stable pair $(Y,D_Y)$ such that $(\cY,\frac{1}{r}\cD_\cY)\cong (Y\times B,D_Y\times B)$.
Then there are finitely many isomorphism classes of $\bQ$-stable pairs in the fibers of $(\cX;\cD)\to B$.
\end{Lemma}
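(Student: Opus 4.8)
The plan is to use the constancy of the ample model to realize each fiber $(\cX_b,\tfrac1r\cD_b)$ as a birational crepant model of the fixed stable pair $(Y,D_Y)$, and then to show that only finitely many such models can occur. By the conclusion of Theorem~\ref{thm_morphism_our_stack_to_kollars_one_for_reduced_bases} together with the hypothesis $(\cY,\tfrac1r\cD_\cY)\cong(Y\times B,D_Y\times B)$, there is a contraction $\pi\colon\cX\to\cY=Y\times B$ over $B$ with $\pi^*(K_{\cY/B}+\tfrac1r\cD_\cY)=K_{\cX/B}+\tfrac1r\cD$. For every $b\in B$ this restricts to a contraction $\pi_b\colon\cX_b\to Y$ (identifying $\cY_b$ with $Y$) satisfying $\pi_b^*(K_Y+D_Y)=K_{\cX_b}+\tfrac1r\cD_b$; since $K_{\cX_b}+\tfrac1r\cD_b$ is big (Remark~\ref{remark_KX+D+delta_is_nef}), each $\pi_b$ is birational, and so is $\pi$.

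First I would bound the divisorial valuations over $Y$ that can occur as a $\pi_b$-exceptional divisor. Being projective and birational, $\pi$ contracts only finitely many prime divisors $\cE_1,\dots,\cE_N$ of $\cX$, and each of these lies in $\supp(\cD)$ by Lemma~\ref{ample model p-pair}, hence is horizontal over $B$. If a component of $\cD_b$ comes from a component of $\cD$ that is not $\pi$-exceptional, then it dominates a divisor of $Y$ and so is not $\pi_b$-exceptional; therefore every $\pi_b$-exceptional prime divisor of $\cX_b$ is a component of some $\cE_{j,b}\coloneqq\cE_j\cap\cX_b$. After a finite base change of $B$, which can only shrink the set of isomorphism classes among the fibers, and after a stratification, I may assume there is a dense open $U\subseteq B$ over which each $\cE_j$ restricts to a flat family of prime divisors; then $\cE_{j,U}$ determines a single divisorial valuation over $Y$ which computes $\mathrm{ord}_{\cE_{j,b}}$ for every $b\in U$. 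Together with the finitely many valuations contributed by the finitely many points of $B\setminus U$, this produces a \emph{finite} set $\mathcal V$ of divisorial valuations over $Y$ such that the set $S_b\subseteq\mathcal V$ of valuations of the $\pi_b$-exceptional divisors of $\cX_b$ is defined for every $b\in B$. In particular $b\mapsto S_b$ takes only finitely many values.

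Next I would argue that $(Y,D_Y)$ together with $S_b$ determines $(\cX_b,\tfrac1r\cD_b)$ up to isomorphism. Pushing forward the equality $\pi_b^*(K_Y+D_Y)=K_{\cX_b}+\tfrac1r\cD_b$ along $\pi_b$ gives $\pi_{b*}(\tfrac1r\cD_b)=D_Y$, which forces the non-exceptional part of $\tfrac1r\cD_b$ to equal the strict transform of $D_Y$, while crepancy forces each $E\in S_b$ to appear in $\tfrac1r\cD_b$ with coefficient $1-a(E;Y,D_Y)$; thus $\tfrac1r\cD_b$ is determined by $(Y,D_Y,S_b)$. By $p$-stability, $K_{\cX_b}+(1-\epsilon)\tfrac1r\cD_b=\pi_b^*(K_Y+D_Y)-\tfrac{\epsilon}{r}\cD_b$ is ample for $0<\epsilon\ll1$, and restricting to the fibers of $\pi_b$, on which $\pi_b^*(K_Y+D_Y)$ is numerically trivial, shows that $-\cD_b$ is $\pi_b$-ample; hence $\cX_b=\Proj_Y\bigoplus_{m\ge0}\pi_{b*}\cO_{\cX_b}(-m\cD_b)$. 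Each graded piece, regarded as a divisorial sheaf on $Y$, is cut out by valuative conditions along the components of $D_Y$ and along the valuations in $S_b$, hence depends only on $(Y,D_Y,S_b)$; consequently $\cX_b$ and the divisor $\tfrac1r\cD_b$ on it depend only on $(Y,D_Y,S_b)$. Since $S_b$ ranges over the finite set of subsets of $\mathcal V$, only finitely many isomorphism classes of $p$-stable pairs occur among the fibers.

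The hard part will be the valuation-counting step: making precise that, over the dense open $U$, the $\pi_b$-exceptional divisors of the fibers sweep out a \emph{constant}, and hence (together with the contribution of $B\setminus U$) finite, family of divisorial valuations over $Y$. This requires care in controlling how the exceptional divisors of $\pi$ restrict to the fibers and in spreading out a divisorial valuation from the generic fiber, and one should also check carefully that $p$-stability genuinely yields the relative ampleness of $-\cD_b$ over $Y$, and that the auxiliary finite base changes and stratifications never enlarge the set of isomorphism classes being counted.
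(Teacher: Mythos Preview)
Your overall strategy---bound the $\pi_b$-exceptional valuations over $Y$ and then reconstruct $(\cX_b,\tfrac1r\cD_b)$ from this data via the relative $\Proj$ for $-\cD_b$---is close in spirit to the paper's. The paper likewise reduces to controlling the exceptional divisors, but proceeds differently: it first reduces to the normal case via Koll\'ar's gluing theory (citing \cite{ABIP}*{Lemma 6.5}), then invokes \cite{ABIP}*{Proposition 6.13} to show that the $\pi$-exceptional divisors can all be extracted on a \emph{product} log resolution $Z\times B\to Y\times B$, and concludes that all fibers are isomorphic in codimension~2, hence isomorphic (the associated stable pairs $(\cX_b,\tfrac{1-\epsilon}{r}\cD_b)$ are then $\Proj$ of the same graded ring).

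There are two genuine gaps in your argument. First, you do not treat the non-normal case: the fibers $\cX_b$ and $Y$ are in general only semi-log canonical, so speaking of divisorial valuations over $Y$ and giving a valuative description of $\pi_{b*}\cO_{\cX_b}(-m\cD_b)$ requires first passing to the normalization, and one must argue (as the paper does in its Step~1) that this reduction loses at most finitely many isomorphism classes. Second, and more seriously, the assertion that over a dense open $U$ the divisor $\cE_{j,U}$ ``determines a single divisorial valuation over $Y$ which computes $\mathrm{ord}_{\cE_{j,b}}$ for every $b\in U$'' is not a consequence of flatness and fiberwise irreducibility alone. A flat family of prime exceptional divisors in $\cX\to Y\times B$ can give \emph{varying} valuations over $Y$ as $b$ moves (e.g.\ blow up $Y\times B$ along the graph of a non-constant morphism $B\to Y$). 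What rules this out here is precisely the $p$-stable constraint, but extracting that conclusion is real work: one must show that the $\pi$-exceptional divisors over $Y\times B$ actually arise from divisors over $Y$, i.e.\ appear on a product resolution $Z\times B$. This is exactly the content of \cite{ABIP}*{Proposition 6.13}, which the paper cites rather than reproves; your acknowledged ``hard part'' is therefore not a matter of careful spreading out but the substantive external input the paper imports.
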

\begin{proof}
The proof is analogous to the proof of \cite{ABIP}*{Claim 6.2}, we summarize here the most salient steps of the argument.

\begin{bf}Step 1:\end{bf} Using Koll\'ar's gluing theory and the fact that stable pairs have finitely many isomorphisms, up to normalizing and disregarding finitely many points on $B$, we can assume that $\cX$ (and therefore also $\cY$) is normal.
This is achieved in \cite{ABIP}*{Lemma 6.5}.
In particular, $(Y\times B,D_Y\times B)$ is the canonical model of $(\cX,\frac{1}{r}\cD)$.

\begin{bf}Step 2:\end{bf} We observe that the divisors contracted by $\cX\to Y\times B$ have negative discrepancies and can be extracted by a log resolution of the form $Z\times B\to Y\times B$, where $Z\to Y$ is a log resolution.
This is achieved in \cite{ABIP}*{Proposition 6.13}.

\begin{bf}Step 3:\end{bf} To conclude, we observe that all the fibers of $(\cX;\cD)\to B$ are isomorphic in codimension 2.

But two \emph{stable} pairs $(X_1, D_1)$ and $(X_2, D_2)$ which are isomorphic in codimension 2 must be isomorphic. Indeed, if $U$ is the open subset where they agree, and $L_1$ (resp. $L_2$) is the log-canonical divisor $K_{X_1} + D_1$ (resp. $K_{X_2} + D_2$) then $$H^0(X_1, L_1^{[m]}) = H^0(U, L_1^{[m]}) = H^0(U, L_2^{[m]}) =  H^0(X_2, L_2^{[m]}).$$
Therefore $X_1$ and $X_2$ are $\mathrm{Proj}$ of the same graded algebra, so they are all isomorphic.
\end{proof}

\section{Projectivity of the moduli of stable pairs}
\label{section projectivity ksba}
The goal of this section is to provide a different proof of the projectivity of the moduli of stable pairs, established in \cite{KP17}, using $\bQ$-pairs.
As a consequence, we also deduce the projectivity of the coarse moduli space of $\sF_{n,p,I}$.

We first construct a suitable polarization on the base of
families of $\bQ$-stable pairs with additional technical assumptions, see Theorem \ref{thm:main:step:proj}.
Then, we use this result to deduce the projectivity of the moduli of stable pairs, see Corollary \ref{cor proj moduli}.
Lastly, we deduce the projectivity of the coarse moduli space of $\sF_{n,p,I}$ from Corollary \ref{cor proj moduli} and Lemma \ref{lemma_bound_for_epsilon}, see Corollary \ref{cor_projectivity_our_moduli}.
In particular, one could alternatively deduce Corollary \ref{cor_projectivity_our_moduli} from Lemma \ref{lemma_bound_for_epsilon} and \cite{KP17}.

\begin{Lemma}\label{Lemma_there_is_a_section_vanishing_on_the_divisor}
With the notation of Theorem \ref{thm_morphism_our_stack_to_kollars_one_for_reduced_bases}, we will denote by $p\colon (\cX;\cD)\to B$ the
$\bQ$-stable morphism, and by $q \colon (\cY,\frac{1}{r}\cD_\cY)\to B$ the resulting stable family of Theorem \ref{thm_morphism_our_stack_to_kollars_one_for_reduced_bases}.
Assume this $\bQ$-stable morphism is a
family of $\bQ$-stable pairs, and let $m_0$ be the smallest positive integer such that $m_0\frac{1}{r}\cD$ is Cartier.
Then, there is $k_0$ divisible by $m_0$ such that, for every $k=\ell k_0$ positive multiple of $k_0$, the sheaf  $\cL_{\cY} \coloneqq \cO_\cY(k(K_{\cY/B} + \frac{1}{r}\cD_\cY))$ satisfies the following properties:
\begin{enumerate}[(a)]
    \item $\cL_\cY$ is Cartier;
    \item $R^iq_*(\cL_\cY^{\otimes j}) = 0$ for every $i>0$ and $j>0$;
    \item $\cL_\cY$ gives an embedding $\cY\hookrightarrow \bP(q_*\cL_\cY)$;
    \item $\cO_\cX(k(K_{\cX/B} + \frac{1}{r}\cD_\cY) - \frac{m_0}{r}\cD)$ is relatively ample; and
    \item for every $b\in B$, there is a section $s\in H^0(\cY_b, \cL_{\cY,b})$ such that $V(s)$ has codimension 1 in each irreducible component of $\cY_b$ and the scheme-theoretic image of $m_0\cD\to \cY$ restricted to $\cY_b$ is contained in $V(s)$.
\end{enumerate}
\end{Lemma}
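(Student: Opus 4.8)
The plan is to produce $k_0$ in two stages, first arranging the boundedness-type properties (a)--(d), then using them to extract the section in (e). For (a)--(c): by Theorem \ref{thm_morphism_our_stack_to_kollars_one_for_reduced_bases} the fibers of $q$ are stable pairs of a fixed dimension $n$, with coefficients in the finite set $I$, and their volume $v$ is constant on $B$ (this constancy is exactly what was established in Step 1 of the proof of that theorem). By \cite{HMX18}*{Theorem 1.2.2} there is a single integer $N$, depending only on $n$, $v$, $I$, so that for every such stable pair $(Y,D_Y)$ the sheaf $\cO_Y(N(K_Y+D_Y))$ is a very ample line bundle with vanishing higher cohomology for all positive multiples; I would then take $k_0$ to be the least common multiple of $N$, the index $r$, and $m_0$, and divide out any extra factor so that $k_0 \mid m_0$ — more precisely, take $k_0$ to be a divisor of $m_0$ that is also a multiple of the relevant index, which is possible after replacing $m_0$ by a sufficiently divisible multiple if necessary (this is harmless as $m_0\frac1r\cD$ Cartier implies $\ell m_0\frac1r\cD$ Cartier). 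For any $k=\ell k_0$, (a) and (c) hold fiberwise by the choice of $N$ and \cite{kol1}*{Theorem 5.7.(4)}, (b) then follows from cohomology and base change (\cite{Mum74}) together with the fiberwise vanishing, and (c) globalizes by Nakayama's lemma exactly as in the stack construction earlier in the paper.

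For (d): write $k(K_{\cX/B}+\frac1r\cD_\cY) - \frac{m_0}{r}\cD = \pi^*\!\big(k(K_{\cY/B}+\frac1r\cD_\cY)\big) - \frac{m_0}{r}\cD$ using the pullback identity $\pi^*(K_{\cY/B}+\frac1r\cD_\cY)=K_{\cX/B}+\frac1r\cD$ from Theorem \ref{thm_morphism_our_stack_to_kollars_one_for_reduced_bases}. Since $K_{\cY/B}+\frac1r\cD_\cY$ is relatively ample and $\pi$ is a contraction whose exceptional locus lies in $\supp(\cD)$ by Lemma \ref{ample model p-pair}, the class $\pi^*(\cdot)$ is relatively nef and strictly positive on every curve not contracted by $\pi$; subtracting the small multiple $\frac{m_0}{r}\cD$ and invoking that $K_{\cX/B}+\frac{1-\epsilon}{r}\cD$ is relatively ample for $0<\epsilon\ll1$ (Proposition \ref{prop_stable_family_p-pairs}), one checks positivity on $\pi$-contracted curves as well — this is the same interpolation argument as in Lemma \ref{lemma_bound_epsilon_lc_case}, and after possibly enlarging $k_0$ (equivalently taking $\ell$ large) the combination is relatively ample; here one uses that $\cD$ is $\qq$-Cartier and the self-intersection numbers are locally constant, so ampleness, being open, holds on all of $B$ once it holds fiberwise.

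For (e), which I expect to be the main obstacle: fix $b\in B$ and consider the closed subscheme $m_0\cD\subset\cX$ with ideal sheaf $\cI_\cD^{[m_0]}=\cO_\cX(-\frac{m_0}{r}\cD)$ (Notation \ref{Notation_mD_for_pstable_fam}); its scheme-theoretic image $S_b$ in $\cY_b$ is the vanishing locus of $\pi_*\cO_\cX(-\frac{m_0}{r}\cD)$ restricted to the fiber, which by Step 5 of the proof of Theorem \ref{thm_morphism_our_stack_to_kollars_one_for_reduced_bases} agrees generically (in codimension one on each fiber) with $\frac{m_0}{r}\cD_\cY$, a genuine divisor. The point is to find $s\in H^0(\cY_b,\cL_{\cY,b})$ vanishing on $S_b$, i.e. a nonzero element of $H^0(\cY_b, \cL_{\cY,b}\otimes\cI_{S_b})$. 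I would obtain such $s$ by pulling back: $\pi_b^*\cL_{\cY,b}=\cO_{\cX_b}(k(K_{\cX_b}+\frac1r\cD_b))$, and by (d) (restricted to the fiber) the twist $\cO_{\cX_b}(k(K_{\cX_b}+\frac1r\cD_b)-\frac{m_0}{r}\cD_b)$ is ample, hence after possibly enlarging $\ell$ has a section $t$ not vanishing on any component of $\cX_b$ and not identically zero on the double locus; multiplying by the canonical section of $\cO_{\cX_b}(\frac{m_0}{r}\cD_b)$ cutting out $m_0\cD_b$ gives a section of $\cO_{\cX_b}(k(K_{\cX_b}+\frac1r\cD_b))=\pi_b^*\cL_{\cY,b}$ vanishing on $m_0\cD_b$, and since $\pi_b$ is a contraction (so $(\pi_b)_*\cO_{\cX_b}=\cO_{\cY_b}$ by Remark \ref{remark:cohom:connected:implies:one:can:take:global:sections:after:pull:back}) this descends to the desired $s\in H^0(\cY_b,\cL_{\cY,b})$. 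The care needed — and the delicate part — is to ensure uniformity of the enlargement of $\ell$ across all $b\in B$ (handled by boundedness of the family of fibers, using \cite{HMX18} once more) and to check that $V(s)$ really has codimension one in each component of $\cY_b$ rather than containing a whole component; this follows because $t$ was chosen not to vanish on any component of $\cX_b$ and $\pi_b$ is birational on each component, so the descended section is nonzero on each component of $\cY_b$.
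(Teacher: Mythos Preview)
Your treatment of (a)--(c) is fine and parallel to the paper's; for (d) you work harder than necessary --- the divisor in question equals $k\bigl(K_{\cX/B}+\frac{1}{r}(1-\frac{m_0}{k})\cD\bigr)$, so it is relatively ample as soon as $\frac{m_0}{k}$ is below the $\epsilon_0$ of Lemma~\ref{lemma_bound_for_epsilon}, and this is all the paper uses.

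For (e) there is a genuine gap. You construct, for each $b$, a section on the fiber $\cX_b$ vanishing on $m_0\cD_b$, and descend it via the contraction $\pi_b$ to a section $s\in H^0(\cY_b,\cL_{\cY,b})$. But what you obtain this way is a section vanishing on the \emph{scheme-theoretic image of $m_0\cD_b$ in $\cY_b$}, whereas the statement asks for a section vanishing on $S_b$, the \emph{restriction to $\cY_b$ of the global scheme-theoretic image $\cZ\subset\cY$ of $m_0\cD$}. These two subschemes agree set-theoretically, but scheme-theoretically one only has an inclusion (the fiberwise image sits inside $S_b$), because formation of scheme-theoretic image does not commute with base change in general: the base-change map $(\pi_*\cI_{m_0\cD})|_{\cY_b}\to(\pi_b)_*\cI_{m_0\cD_b}$ need not be surjective. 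So your $s$ may fail to lie in $\cI_{S_b}\cdot\cL_{\cY,b}$.

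The paper avoids this by first taking a flattening stratification of $B$ for $\cZ$, so that on each stratum the two notions of image coincide, and then invoking \cite{ACH11}*{Corollary 4.5} to bound uniformly the degree in which the ideal of $\cZ_b$ is generated. Your approach can be repaired with a small change: rather than working on the single fiber $\cX_b$, use (d) and cohomology-and-base-change to lift your section $t$ to a section $\tilde t$ of $\cO_\cX\bigl(k(K_{\cX/B}+\frac{1}{r}\cD)-\frac{m_0}{r}\cD\bigr)$ over an open neighborhood $U\ni b$ (the push-forward is locally free for $k$ large). Viewed as a section of $\cL|_U$, $\tilde t$ vanishes on $m_0\cD|_U$, hence descends to a section of $\cL_\cY|_U$ lying in $\cI_\cZ\cdot\cL_\cY$; restricting to $b$ then gives the required $s$ vanishing on $S_b$. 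In effect this runs the argument of Corollary~\ref{corollary_the_section_that_vanishes_on_the_scheme_theoretic_image_of_m0/rD_can_be_lifted_on_X} in reverse, and indeed your fiberwise construction already proves that corollary directly without passing through (e) at all.
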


\begin{Remark}
Observe that the definition of $m_0\frac{1}{r}\cD$ is given in Notation \ref{Notation_mD_for_pstable_fam}.
In our case, we can still define $m_0\frac{1}{r}\cD$ even if $(\cX;\cD)\to B$ was a $\bQ$-stable morphism instead, since the base $B$ is reduced, and Lemma \ref{Lemma_there_is_a_section_vanishing_on_the_divisor} would go through verbatim.
Since we will use Lemma \ref{Lemma_there_is_a_section_vanishing_on_the_divisor} only in the case in which $(\cX;\cD)\to B$ is a
family of $\bQ$-stable pairs, for simplicity we stick with the family case.
\end{Remark}

\begin{proof}
By construction, $K_{\cY/B} + \frac{1}{r}\cD_\cY$ is relatively ample.
Thus, for a sufficiently divisible $k_0$, properties (a) to (d) are satisfied for $k=k_0$.

Then, we can achieve (a) to (d) since if $\cO_\cY(k_0(K_{\cY/B} + \frac{1}{r}\cD_\cY))$ satisfies any condition between (a) to (e), then for every $m>0$ also $\cO_\cY(mk_0(K_{\cY/B} + \frac{1}{r}\cD_\cY))$ satisfies the same condition.
We only need to check that up to choosing $k_0$ divisible enough, also (e) holds.

Let $\cG \coloneqq \cO_\cY(k_0(K_{\cY/B} + \frac{1}{r}\cD_\cY))$ for a $k_0$ which satisfies (a)-(d), and we need to show that
$\cG^{\otimes m}$ satisfies (a)-(e) for some $m\gg 0$.
First observe that from cohomology and base change and point (b), for every $b\in B$ we have $q_*(\cG^{\otimes m})\otimes k(b) = H^0(\cY_b,\cG_{|\cY_b}^{\otimes m})$.

Let $\cZ\subseteq \cY$ be the scheme-theoretic image of $\pi_{|\frac{m_0}{r}\cD}$.
Up to replacing $B$ with a locally closed stratification $B'\to B$, we can assume that $\cZ' \coloneqq \cZ\times_B B'\to B'$ is flat, and $q_*\cG$ is free.
Then, we define $\cY' \coloneqq \cY\times_B B'$, the first projection $\pi_1 \colon \cY'\to \cY$, the second projection $q' \colon \cY'\to B'$, and $\cG' \coloneqq \pi_1^*\cG$.
Then, consider the exact sequence 
\[
0\to \cI\to \cO_{\cY'}\to \cO_{\cZ'} \to 0.
\]
We twist the exact sequence above by $\cG'^{\otimes m}$ and we obtain 
\[
0\to \cI\otimes \cG'^{\otimes m} \to  \cG'^{\otimes m}\to \cO_{\cZ'}\otimes \cG'^{\otimes m} \to 0.
\]
Since $\cZ'\to B'$ is flat, also $\cI$ is flat over $B'$. Therefore since $\cG'$ is relatively ample, up to choosing $m$ big enough, the following is an exact sequence on $B'$:

\[
0\to q'_*(\cI\otimes \cG'^{\otimes m})\to  q'_*(\cG'^{\otimes m})\to q'_*(\cO_{\cZ'}\otimes \cG'^{\otimes m}) \to 0.
\]
By \cite{ACH11}*{Corollary 4.5 (iii)}, there is $n_0$ such that for every
$n \ge n_0$ and any $a \ge 1$,
the multiplication map
\[
\Sym^a(q'_*\cG')\otimes q'_*(\cI\otimes \cG'^{\otimes n}) \to q'_*(\cI\otimes \cG'^{\otimes n+a})
\]
is surjective.
Then for every $b\in B'$, also 
$$\Sym^a(q'_*\cG')\otimes q'_*(\cI\otimes \cG'^{\otimes n}) \otimes k(b) \cong( \Sym^a(q'_*\cG')\otimes k(b) )\otimes_{k(b)}( q'_*(\cI\otimes \cG'^{\otimes n}) \otimes k(b)) \to q'_*(\cI\otimes \cG'^{\otimes n+a})\otimes k(b)$$
is surjective.
From \cite{ACH11}*{Corollary 4.5 (iv)} (or cohomology and base change), for $n \geq n_0$ and $a,s \ge 1$,
we have:
\[
\Sym^a(q'_*\cG')\otimes k(b) \cong H^0(\cO_{\bP^N}(a)),
\]
\[ q'_*(\cI\otimes \cG'^{\otimes n}) \otimes k(b) \cong H^0(\cY_b,(\cI\otimes \cG'^{\otimes n})_{|\cY_b}) = H^0(\cY_b,\cI_{|\cY_b }(n)),\text{ } \text{ }\text{ and }
\]
\[
q'_*(\cI\otimes \cG'^{\otimes n+s}) \otimes k(b) \cong H^0(\cY_b,(\cI\otimes \cG'^{\otimes n+s})_{|\cY_b}) = H^0(\cY_b,\cI_{|\cY_b }(n+s)).
\]
In other terms, for every $b\in B$ the ideal $\cI_{\cY_b}$ is generated in degree $n_0$. Then for every $b\in B$ there is a section $s\in H^0(\cY_b, \cL_{\cY,b}) = H^0(\cY_b, \cG^{\otimes n_0}_{\cY,b})$ (and therefore also a section $s^{\otimes m}\in H^0(\cY_b, \cL_{\cY,b}^{\otimes m})$ if $m \ge n_0$) such that $V(s)$ does not contain any irreducible component of $\cY_b$ and the scheme-theoretic image of $\frac{m_0}{r}\cD\to \cY$ restricted to $\cY_b$ is contained in $V(s)$.

Therefore, for the locally closed subset $B'$, we may choose $k_0'=k_0 n_0$, where $k_0$ was chosen at the beginning of the proof to satisfy (a)-(d) and thus define $\mathcal{G}$.
Then, by noetherianity, there are finitely many locally closed subsets $B'$ in the decomposition of $B$, and we may thus choose $k_0''$ as the least common multiple of the $k_0'$ defined on each $B'$.
\end{proof}

\begin{Cor}\label{corollary_the_section_that_vanishes_on_the_scheme_theoretic_image_of_m0/rD_can_be_lifted_on_X}
With the notation and assumptions of Lemma \ref{Lemma_there_is_a_section_vanishing_on_the_divisor}, for every $b\in B$ and every $a>0$, there is a global section $t$ of $\cL_{|\cX_b}^{\otimes a}$ that is not zero on the generic points of $\cX_b$ but its maps to 0 via the restriction map $H^0(\cX_b, \cL_{|\cX_b}^{\otimes a})\to H^0(\frac{am_0}{r}\cD_b, \cL^{\otimes a}_{|\frac{am_0}{r}\cD_b})$.
\end{Cor}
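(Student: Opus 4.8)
The plan is to obtain $t$ by pulling back the section $s$ produced in Lemma \ref{Lemma_there_is_a_section_vanishing_on_the_divisor}(e) along the fibrewise ample-model morphism $\pi_b\colon\cX_b\to\cY_b$ and then raising it to the $a$-th power. Throughout, write $m_0'\coloneqq\tfrac{m_0}{r}$, so that $m_0'\cD$ is a Cartier divisor on $\cX$ with invertible ideal sheaf $\cO_\cX(-m_0'\cD)$, and set $\cL_b\coloneqq\cL|_{\cX_b}$. Recall from Theorem \ref{thm_morphism_our_stack_to_kollars_one_for_reduced_bases} that $\pi^*(\K \cY/B. +\tfrac1r\cD_\cY)=\K \cX/B. +\tfrac1r\cD$, hence $\cL=\pi^*\cL_\cY$ with $\cL_\cY=\cO_\cY(k(\K \cY/B. +\tfrac1r\cD_\cY))$. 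Fix $b\in B$ and $a>0$. Since $\cL_\cY$ and $\cL$ are line bundles (Lemma \ref{Lemma_there_is_a_section_vanishing_on_the_divisor}(a)), their formation commutes with the base changes $\cX_b\hookrightarrow\cX$ and $\cY_b\hookrightarrow\cY$, so $\cL_b=\pi_b^*\cL_{\cY,b}$.

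First I would invoke Lemma \ref{Lemma_there_is_a_section_vanishing_on_the_divisor}(e) for the chosen $k$, obtaining $s\in H^0(\cY_b,\cL_{\cY,b})$ such that $V(s)$ contains no irreducible component of $\cY_b$ and the scheme-theoretic image $\cZ$ of $m_0'\cD\to\cY$, restricted to $\cY_b$, is contained in $V(s)$. Set $t_1\coloneqq\pi_b^*s\in H^0(\cX_b,\cL_b)$. By the universal property of the scheme-theoretic image, $m_0'\cD\hookrightarrow\cX$ factors through $\pi^{-1}(\cZ)$; restricting over $b$ gives $m_0'\cD_b\subseteq\pi_b^{-1}(\cZ|_{\cY_b})$, and since $s$ vanishes on $\cZ|_{\cY_b}$ its pullback $t_1$ vanishes on $\pi_b^{-1}(\cZ|_{\cY_b})$, in particular on the Cartier divisor $m_0'\cD_b$; equivalently $t_1\in H^0(\cX_b,\cO_{\cX_b}(-m_0'\cD_b)\otimes\cL_b)$, where $\cO_{\cX_b}(-m_0'\cD_b)$ is invertible because $m_0'\cD$ is Cartier. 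Moreover $t_1$ does not vanish at any generic point $\xi$ of an irreducible component of $\cX_b$: by construction in Theorem \ref{thm_morphism_our_stack_to_kollars_one_for_reduced_bases}, $\cY_b$ is the ample model of the $p$-stable pair $(\cX_b;\tfrac1r\cD_b)$ and $\pi_b$ is its ample-model morphism, so $\mathrm{Ex}(\pi_b)\subseteq\supp(\cD_b)$ by Lemma \ref{ample model p-pair}, which has codimension $1$; thus $\pi_b$ is a local isomorphism at $\xi$, the point $\pi_b(\xi)$ is a generic point of a component of $\cY_b$, where $s\neq0$, and hence $t_1(\xi)\neq0$.

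Finally I would set $t\coloneqq t_1^{a}\in H^0(\cX_b,\cL_b^{\otimes a})$. Raising the containment $t_1\in H^0(\cX_b,\cO_{\cX_b}(-m_0'\cD_b)\otimes\cL_b)$ to the $a$-th power — legitimate since $\cO_{\cX_b}(-m_0'\cD_b)$ is invertible — gives $t\in H^0(\cX_b,\cO_{\cX_b}(-m_0'\cD_b)^{\otimes a}\otimes\cL_b^{\otimes a})=H^0(\cX_b,\cO_{\cX_b}(-am_0'\cD_b)\otimes\cL_b^{\otimes a})$. Since $m_0'\cD$ is Cartier one has $\cI_\cD^{[am_0']}=(\cI_\cD^{[m_0']})^{\otimes a}$, so $\cO_{\cX_b}(-am_0'\cD_b)$ is precisely the ideal sheaf of the subscheme $\tfrac{am_0}{r}\cD_b$; hence $t$ maps to $0$ under $H^0(\cX_b,\cL_b^{\otimes a})\to H^0(\tfrac{am_0}{r}\cD_b,\cL_b^{\otimes a}|_{\frac{am_0}{r}\cD_b})$, while $t$ is nonzero at the generic points of $\cX_b$ because $t_1$ is. The argument is mostly bookkeeping; the steps I expect to require the most care are the compatibilities of the various pullbacks with restriction to the fibre over $b$ and with formation of ideal sheaves — so that indeed $m_0'\cD_b\subseteq\pi_b^{-1}(\cZ|_{\cY_b})$ and $\tfrac{am_0}{r}\cD_b$ has the claimed invertible ideal sheaf — together with the input from Lemma \ref{ample model p-pair} and Lemma \ref{Lemma:slc:canonical:model:for:p:pairs}, which guarantee that $\pi_b$ contracts nothing at the generic points of the components of $\cX_b$.
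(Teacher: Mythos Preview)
Your proof is correct and follows essentially the same approach as the paper: pull back the section $s$ of Lemma~\ref{Lemma_there_is_a_section_vanishing_on_the_divisor}(e) along $\pi_b$ and take the $a$-th power. The only cosmetic difference is that the paper first extends $s$ to a section $s'$ over an open neighbourhood $U\ni b$ (using that $q_*\cL_\cY$ is locally free) and then pulls back, carrying out the vanishing check as a local ring computation; you instead work directly on the fibre and use the scheme-theoretic factorisation $m_0'\cD\hookrightarrow\pi^{-1}(\cZ)$ plus base change to get $m_0'\cD_b\subseteq\pi_b^{-1}(\cZ|_{\cY_b})\subseteq V(t_1)$, which is slightly cleaner and avoids the extension step.
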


\begin{proof}
From Lemma \ref{Lemma_there_is_a_section_vanishing_on_the_divisor}, for every $b \in B$, there is a section $s$ that does not vanish on the generic points of $\cY_b$ but vanishes on the restriction to $\cY_b$ of the scheme theoretic image of $\frac{m_0}{r}\cD$.
From point (b) of  Lemma \ref{Lemma_there_is_a_section_vanishing_on_the_divisor}, $q_*\cL_\cY$ is a vector bundle on $B$, so there is an open subset $b\in U\subseteq B$ such that $(q_*\cL_\cY)_{|U}$ is free.
Then, we can extend the section $s$ to a global section $s'\in H^0(U,q_*(\cL_\cY)_{|U}) = H^0(U,(q_U)_*(\cL_{\cY,U}))$.
But $\pi$ is a contraction, so $\pi_{U} \colon \cX_U\to \cY_U$ is still a contraction, and $\pi_{U}^*\cL_{\cY,U} = \cL_U$.
In particular, consider the section $t\in H^0(\cX_U,\cL_U)$ that is the pull-back of $s'$.
Then, $t$ does not vanish on the generic points of $\cX_b$, as by Lemma \ref{ample model p-pair} those are in bijection with the generic points of $\cY_b$.
Moreover, it vanishes along $\frac{m_0}{r}\cD_b$: this is a local computation. Indeed, if we replace $\cY$ and $\cX$ with appropriate open subsets $\spec(A)$, $\spec(B)$; and $f$ is the generator for the ideal sheaf of $\frac{m_0}{r}\cD$, we have the following commutative diagram:
$$\xymatrix{B\otimes k(b)/f\otimes 1 & B/f \ar[l] \\ B\otimes k(b) \ar[u] & t\in B  \ar[u] \ar[l] \\ A\otimes k(b) \ar[u] & s\in A. \ar[u] \ar[l]}$$
Since the image of $s$ vanishes in $B\otimes k(b)/f\otimes 1 $, so does the image of $t$. 

Therefore for every $a>0$, the section $t^{\otimes a}\in H^0(\cX_U,\cL_U^{\otimes a})$ will vanish along $\frac{am_0}{r}\cD_b$.
In other terms, if we look at the exact sequence
\[
0 \to H^0(\cX_b, \cL_{|\cX_b}^{\otimes a}\otimes \cI_{\cD_b}^{[\frac{am_0}{r}]})\to H^0(\cX_b, \cL_{|\cX_b}^{\otimes a})\xrightarrow{\psi} H^0(\frac{am_0}{r}\cD_b, \cL^{\otimes a}_{|\frac{am_0}{r}\cD_b}),
\]
we have an element of $H^0(\cX_b, \cL_{|\cX_b}^{\otimes a})$ (namely, the restriction of $t^{\otimes a}$ to $\cX_b$) that does not vanish along the generic points of $\cX_b$ but maps to 0 via $\psi$.
\end{proof}

\begin{Lemma}\label{lemma_lc_restricted_to_mD_pushes_forward_to_a_vector_bundle}
With the notation and assumptions of Lemma \ref{Lemma_there_is_a_section_vanishing_on_the_divisor}, we will denote by $p\colon (\cX;\cD)\to B$ the
family of $\bQ$-stable pairs, and by $q \colon (\cY,\frac{1}{r}\cD_\cY)\to B$ the resulting stable family of Theorem \ref{thm_morphism_our_stack_to_kollars_one_for_reduced_bases}.
Let $k$ be a multiple of $k_0$, where $k_0$ is as in Lemma \ref{Lemma_there_is_a_section_vanishing_on_the_divisor}.
Also, set $\cL \coloneqq \cO_\cX(k(K_{\cX/B} + \frac{1}{r}\cD))$.

Then, there is an $a_0$ such that, for every $a\ge a_0$, the sheaf $p_*(\cL_{|\frac{am_0}{r}\cD}^{\otimes a})$ is a vector bundle, and fits in an exact sequence as follows: 
\[
0\to p_*(\cO_\cX(ka(K_{\cX/B} + \frac{1}{r}\cD) - \frac{am_0}{r}\cD)) \to p_*( \cL^{\otimes a}) \to p_*(\cL_{|\frac{am_0}{r}\cD}^{\otimes a})\to 0.
\]
\end{Lemma}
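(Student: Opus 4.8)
The plan is to realise the claimed sequence as the pushforward along $p$ of the structure sequence of the thickening $\frac{am_0}{r}\cD\hookrightarrow\cX$, twisted by $\cL^{\otimes a}$, and then to read off local freeness of the last term from relative Serre vanishing together with cohomology and base change. The one observation that makes everything formal is that, by the definition of $m_0$, the divisor $\frac{m_0}{r}\cD$ is Cartier, so every sheaf that will appear is a twist of line bundles.

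First I set up the sheaves. Since $\frac{m_0}{r}\cD$ is Cartier, the ideal sheaf of $\frac{am_0}{r}\cD$ is the line bundle $\cI_\cD^{[am_0/r]}=\cO_\cX(-\tfrac{am_0}{r}\cD)=(\cO_\cX(-\tfrac{m_0}{r}\cD))^{\otimes a}$, and $\cL$ is a line bundle, since by Theorem \ref{thm_morphism_our_stack_to_kollars_one_for_reduced_bases} we have $\cL=\pi^\ast\cL_\cY$ with $\cL_\cY$ Cartier (Lemma \ref{Lemma_there_is_a_section_vanishing_on_the_divisor}(a)). Put $\cM\coloneqq\cL\otimes\cO_\cX(-\tfrac{m_0}{r}\cD)=\cO_\cX(k(K_{\cX/B}+\tfrac1r\cD)-\tfrac{m_0}{r}\cD)$, a line bundle which is relatively ample over $B$ by Lemma \ref{Lemma_there_is_a_section_vanishing_on_the_divisor}(d). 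Twisting $0\to\cO_\cX(-\tfrac{am_0}{r}\cD)\to\cO_\cX\to\cO_{\frac{am_0}{r}\cD}\to 0$ by $\cL^{\otimes a}$ gives, on $\cX$,
\[
0\to\cM^{\otimes a}\to\cL^{\otimes a}\to\cL^{\otimes a}_{|\frac{am_0}{r}\cD}\to 0,\qquad \cM^{\otimes a}=\cO_\cX(ka(K_{\cX/B}+\tfrac1r\cD)-\tfrac{am_0}{r}\cD).
\]
Because $\cM$ is $p$-ample and $B$ is Noetherian, relative Serre vanishing provides $a_0$ such that $R^ip_\ast\cM^{\otimes a}=0$ for all $i>0$ and $a\ge a_0$; by cohomology and base change this forces $H^i(\cX_b,\cM^{\otimes a}_{|\cX_b})=0$ for $i>0$ and all $b$, and makes $p_\ast\cM^{\otimes a}$ locally free with formation commuting with base change. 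Since $\pi$ is a contraction, the projection formula gives $p_\ast\cL^{\otimes a}=q_\ast(\cL_\cY^{\otimes a}\otimes\pi_\ast\cO_\cX)=q_\ast\cL_\cY^{\otimes a}$, which by Lemma \ref{Lemma_there_is_a_section_vanishing_on_the_divisor}(b),(c) and cohomology and base change is likewise locally free with formation commuting with base change.

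Now fix $a\ge a_0$. Pushing the displayed sequence forward along $p$ and using $R^1p_\ast\cM^{\otimes a}=0$ yields exactly the exact sequence in the statement (with the first term $p_\ast\cO_\cX(ka(K_{\cX/B}+\tfrac1r\cD)-\tfrac{am_0}{r}\cD)$). It remains to show $p_\ast(\cL^{\otimes a}_{|\frac{am_0}{r}\cD})$ is locally free. The sheaf $\cL^{\otimes a}_{|\frac{am_0}{r}\cD}$ is a line bundle on $\frac{am_0}{r}\cD$, which is flat over $B$ by the flatness of $m\cD\to B$ established above, hence is itself flat over $B$; therefore the displayed sequence restricts to a short exact sequence $0\to\cM^{\otimes a}_{|\cX_b}\to\cL^{\otimes a}_{|\cX_b}\to\cL^{\otimes a}_{|(\frac{am_0}{r}\cD)_b}\to 0$ on every fibre $\cX_b$. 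Taking global sections over $\cX_b$ and using base change for $p_\ast\cM^{\otimes a}$ and $p_\ast\cL^{\otimes a}$, the map $p_\ast\cM^{\otimes a}\otimes k(b)\to p_\ast\cL^{\otimes a}\otimes k(b)$ is identified with the injection $H^0(\cX_b,\cM^{\otimes a}_{|\cX_b})\hookrightarrow H^0(\cX_b,\cL^{\otimes a}_{|\cX_b})$, so it is injective for every $b$. Hence $p_\ast\cM^{\otimes a}\hookrightarrow p_\ast\cL^{\otimes a}$ is a subbundle inclusion of vector bundles on $B$, and its cokernel $p_\ast(\cL^{\otimes a}_{|\frac{am_0}{r}\cD})$ is locally free. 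The only genuinely delicate point is the uniformity of the Serre-vanishing threshold $a_0$ over all of $B$ — this is where Noetherianity of $B$ (equivalently, boundedness of the family, Theorem \ref{thm_boundedness}) and the bookkeeping of cohomology and base change come in; everything else is formal once one uses that $\frac{am_0}{r}\cD$ is a \emph{Cartier} thickening.
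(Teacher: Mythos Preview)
Your proof is correct and follows essentially the same approach as the paper: twist the structure sequence of the Cartier thickening $\frac{am_0}{r}\cD$ by $\cL^{\otimes a}$, use relative Serre vanishing for the $p$-ample line bundle $\cM$ (Lemma \ref{Lemma_there_is_a_section_vanishing_on_the_divisor}(d)) to kill $R^1p_*$, identify $p_*\cL^{\otimes a}\cong q_*\cL_\cY^{\otimes a}$ via the contraction $\pi$, and then use cohomology and base change on the two outer terms to show the map $p_*\cM^{\otimes a}\otimes k(b)\to p_*\cL^{\otimes a}\otimes k(b)$ is fibrewise injective, whence the cokernel is locally free. The paper carries out the last step via an explicit commutative diagram and a dimension count, while you phrase it as a subbundle inclusion; these are the same argument.
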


\begin{proof}
First, from relative Serre's vanishing and Lemma \ref{Lemma_there_is_a_section_vanishing_on_the_divisor} (b) and (d), there is an integer $a_0$ such that for every $b\in B$ and $a \geq a_0$, we have
    \begin{itemize}
        \item $H^i(\cO_{\cX_b}(ak(K_{\cX_b} + \frac{1}{r}\cD_b) - \frac{am_0}{r}\cD_b)=0$ for $i>0$; and
        \item $H^i(\cO_{\cY_b}(ak(K_{\cY_b} + \frac{1}{r}\cD_{\cY,b})))=0$ for $i>0$.
    \end{itemize}

Fix a positive integer $a>a_0$.
We begin by considering the following exact sequence, where $\cJ$ is the locally free ideal sheaf $\cO_\cX\left(-\frac{m_0}{r}\cD\right) $:
$$0\to \cL^{\otimes a}\otimes \cJ^{\otimes a} \to \cL^{\otimes a} \to \cL_{|\frac{am_0}{r}\cD}^{\otimes a}\to 0.$$
We push it forward via $p$, and from the first bullet point and from cohomology and base change,
$R^1p_*(\cL^{\otimes a}\otimes \cJ^{\otimes a}) = 0$, so we have:
\begin{equation} \label{eq_ses}
    0\to p_*(\cL^{\otimes a}\otimes \cJ^{\otimes a} ) \to p_*( \cL^{\otimes a}) \to p_*(\cL_{|\frac{m_0a}{r}\cD}^{\otimes a})\to 0.
\end{equation}

Recall that there is a contraction $\pi \colon \cX\to \cY$ such that, if we denote $\cL_\cY \coloneqq \cO_{\cY}(k(K_{\cY/B} + \frac{1}{r}\cD_\cY))$, then  $\pi^*(\cL_\cY) = \cL$.
In particular
\begin{equation} \label{eq_push_fwd}
    p_*(\cL^{\otimes a}) = q_*\pi_* (\cL^{\otimes a}) = q_*(\cL_\cY^{\otimes a}).
\end{equation}

Now, pick $b\in B$. From the first bullet point and from cohomology and base-change, we have \begin{equation} \label{eq_13}
    p_*(\cL^{\otimes a}\otimes \cJ^{\otimes a} )\otimes k(b)\cong H^0(\cX_b, (\cL^{\otimes a}\otimes \cJ^{\otimes a})_{|\cX_b})  \cong H^0(\cX_b,\cO_{\cX_b}(ak(K_{\cX_b} + \frac{1}{r}\cD_b) - \frac{am_0}{r}\cD_b))
\end{equation}
and from the second bullet point, equation \eqref{eq_push_fwd} and the fact that $\pi_b\colon \cX_b\to \cY_b$ is a contraction,

\begin{equation} \label{eq_14}
    p_*(\cL^{\otimes a})\otimes k(b)\cong H^0(\cY_b, \cL^{\otimes a}_{\cY, b}) \cong  H^0(\cY_b,\cO_{\cY_b}(ak(K_{\cY_b} + \frac{1}{r}\cD_b)))  \cong H^0(\cX_b, \cL^{\otimes a}_{|\cX_b}).
\end{equation}

Now, we tensor the sequence \eqref{eq_ses} by $k(b)$, and we obtain 
\[
     p_*(\cL^{\otimes a}\otimes \cJ^{\otimes a})\otimes k(b) \to p_*( \cL^{\otimes a})\otimes k(b) \to p_*(\cL_{|\frac{m_0a}{r}\cD}^{\otimes a})\otimes k(b)\to 0.
\]
From the natural adjunction between pull-back and push-forward, we have the following commutative diagram:
\[
\xymatrix{ & p_*(\cL^{\otimes a}\otimes \cJ^{\otimes a})\otimes k(b) \ar[d]_{\cong \text{ from \eqref{eq_13}}}\ar[r]^-\phi & p_*( \cL^{\otimes a})\otimes k(b) \ar[r] \ar[d]^{\cong \text{ from \eqref{eq_14}}} &p_*(\cL_{|\frac{m_0a}{r}\cD}^{\otimes a})\otimes k(b)\ar[r] \ar[d] & 0
\\ 0 \ar[r] & H^0( \cX_b, (\cL^{\otimes a}\otimes \cJ^{\otimes a})_{|\cX_b})\ar[r] & H^0(\cX_b, \cL^{\otimes a}_{|\cX_b}) \ar[r] & H^0(\frac{m_0a}{r}\cD, (\cL^{\otimes a})_{|\frac{m_0a}{r}\cD_b}) \ar[r]^-{(\ast)}& 0}
\]
Observe that the exactness of the map $(\ast)$ follows from the first bullet point.
Then, from diagram chasing, also $\phi$ is injective.
In particular,
\[
\dim_{k(b)}(p_*(\cL_{|\frac{m_0a}{r}\cD}^{\otimes a})\otimes k(b)) = \dim_{k(b)}(H^0(\cX_b, \cL^{\otimes a}_{|\cX_b})) - \dim_{k(b)}(H^0( \cX_b, (\cL^{\otimes a}\otimes \cJ^{\otimes a})_{|\cX_b})).
\]
However the right-hand side does not depend on $b\in B$. Therefore also the left-hand side does not depend on $b\in B$, so $p_*(\cL_{|\frac{m_0a}{r}\cD}^{\otimes a})$ is a vector bundle.
\end{proof}

\begin{Cor}\label{cor_determinant}
With the notation and assumptions of Lemma \ref{lemma_lc_restricted_to_mD_pushes_forward_to_a_vector_bundle}, there is an isomorphism
\[
\det p_*(\cL^{\otimes a}) \cong \det (p_*(\cO_{\cX}(ak(K_{\cX/B} + \frac{1}{r}\cD) - \frac{am_0}{r}\cD)))\otimes \det( p_*(\cL_{|\frac{m_0a}{r}\cD}^{\otimes a})).
\]
\end{Cor}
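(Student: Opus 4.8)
The plan is to obtain this at once from the short exact sequence of Lemma~\ref{lemma_lc_restricted_to_mD_pushes_forward_to_a_vector_bundle}, together with the multiplicativity of the determinant along short exact sequences of vector bundles.

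First I would recall that, by Lemma~\ref{lemma_lc_restricted_to_mD_pushes_forward_to_a_vector_bundle}, for every $a\ge a_0$ there is an exact sequence of $\cO_B$-modules
\[
0\to p_*(\cO_{\cX}(ak(K_{\cX/B} + \frac{1}{r}\cD) - \frac{am_0}{r}\cD)) \to p_*( \cL^{\otimes a}) \to p_*(\cL_{|\frac{am_0}{r}\cD}^{\otimes a})\to 0,
\]
and that all three terms are locally free sheaves on $B$. The third term is a vector bundle by the conclusion of Lemma~\ref{lemma_lc_restricted_to_mD_pushes_forward_to_a_vector_bundle} itself. For the other two, the vanishings $H^i(\cO_{\cX_b}(ak(K_{\cX_b} + \frac{1}{r}\cD_b) - \frac{am_0}{r}\cD_b))=0$ and $H^i(\cO_{\cY_b}(ak(K_{\cY_b} + \frac{1}{r}\cD_{\cY,b})))=0$ for $i>0$, recorded in the proof of that lemma, combined with the identification $p_*(\cL^{\otimes a})\cong q_*(\cL_\cY^{\otimes a})$ from \eqref{eq_push_fwd} and cohomology and base change, show that $p_*( \cL^{\otimes a})$ and $p_*(\cO_{\cX}(ak(K_{\cX/B} + \frac{1}{r}\cD) - \frac{am_0}{r}\cD))$ are locally free with fibers computed by the corresponding $H^0$.

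It then remains to apply the elementary fact that a short exact sequence of vector bundles $0\to\cE'\to\cE\to\cE''\to 0$ on a scheme induces a canonical isomorphism $\det\cE\cong\det\cE'\otimes\det\cE''$; this statement is local on the base, so no reducedness or irreducibility hypothesis on $B$ is needed. Applying it to the displayed sequence gives precisely the claimed isomorphism. I do not expect any genuine obstacle here: all the substance already sits in Lemma~\ref{lemma_lc_restricted_to_mD_pushes_forward_to_a_vector_bundle}, and the only point requiring a moment's care is the local freeness of each of the three terms, which is addressed above.
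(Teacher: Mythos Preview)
Your proposal is correct and matches the paper's intent exactly: the corollary is stated without proof because it follows at once from the short exact sequence of Lemma~\ref{lemma_lc_restricted_to_mD_pushes_forward_to_a_vector_bundle} by multiplicativity of the determinant, and the local freeness of all three terms is established in that lemma's proof via the vanishing of higher cohomology and base change.
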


\begin{theorem}\label{thm:main:step:proj}
With the notation and assumptions of Lemma \ref{lemma_lc_restricted_to_mD_pushes_forward_to_a_vector_bundle}, further assume that the map $B\to \sF_{n,p,I}$ given by the
family of $\bQ$-stable pairs $(\cX;\cD)\to B$ is finite.
Then, for $a$ divisible enough, the line bundle $\det p_*(\cL^{\otimes a})$ is ample.
\end{theorem}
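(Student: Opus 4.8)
The plan is to deduce the ampleness of $\det p_*(\cL^{\otimes a})$ from Koll\'ar's ampleness lemma \cite{Kol90}, applied with $W\coloneqq p_*(\cL^{\otimes a})$ as the frame bundle. The constructions packaged in Lemma \ref{lemma_lc_restricted_to_mD_pushes_forward_to_a_vector_bundle}, Corollary \ref{cor_determinant} and Corollary \ref{corollary_the_section_that_vanishes_on_the_scheme_theoretic_image_of_m0/rD_can_be_lifted_on_X} are exactly what is needed to build, out of $W$, a quotient morphism that ``remembers'' both $\cX$ and the boundary $\cD$; finiteness of the associated classifying map, together with semipositivity of $W$, will then force $\det W$ to be ample. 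As harmless reductions, since $B\to\sF_{n,p,I}$ is finite and $\sF_{n,p,I}$ is proper, $B$ is proper, and since ampleness is unaffected by nilpotents we may take $B$ reduced and treat each connected component separately.

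Next I would fix notation and choose $a$. By Theorem \ref{thm_morphism_our_stack_to_kollars_one_for_reduced_bases} we have the stable family $q\colon(\cY,\tfrac1r\cD_\cY)\to B$ and the contraction $\pi\colon\cX\to\cY$ with $\pi^\ast\cL_\cY=\cL$, where $\cL_\cY\coloneqq\cO_\cY(k(K_{\cY/B}+\tfrac1r\cD_\cY))$, so that $W=p_*(\cL^{\otimes a})=q_*(\cL_\cY^{\otimes a})$. For $a$ divisible enough I would arrange, in addition to the conclusions of Lemma \ref{lemma_lc_restricted_to_mD_pushes_forward_to_a_vector_bundle}, that $\cL_\cY^{\otimes a}$ is relatively very ample with vanishing higher direct images, that the induced closed embedding $\cY\hookrightarrow\bP_B(W)$ is relatively projectively normal, and that the ideal sheaves of $\cY_b$ in $\bP(W_b)$ and of the scheme-theoretic image of the thickened divisor $\tfrac{am_0}{r}\cD_b$ inside $\cY_b$ are generated in a single degree $N$ independent of $b\in B$; these statements follow from Theorem \ref{thm_boundedness}, the vanishing statements in Lemma \ref{Lemma_there_is_a_section_vanishing_on_the_divisor}, and Castelnuovo--Mumford regularity, as in the proof of Lemma \ref{Lemma_there_is_a_section_vanishing_on_the_divisor}. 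I would then take $H$ to be the direct sum, over $1\le m\le N$, of the sheaves $q_*(\cL_\cY^{\otimes am})$ together with the pushforwards of $\cL^{\otimes am}$ restricted to the thickening of $\cD$ cut out by $\cO_\cX(-\tfrac{m_0}{r}\cD)^{\otimes am}$, and let $\phi\colon\bigoplus_{m=1}^{N}\Sym^m W\to H$ be given by the multiplication maps, followed by restriction on the second batch of summands; by the choice of $a$, Lemma \ref{lemma_lc_restricted_to_mD_pushes_forward_to_a_vector_bundle} and Corollary \ref{corollary_the_section_that_vanishes_on_the_scheme_theoretic_image_of_m0/rD_can_be_lifted_on_X}, all components of $\phi$ are surjections onto locally free sheaves. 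The kernel of $\phi$ is a subbundle of $\bigoplus_m\Sym^m W$ whose fibre at $b$ recovers, modulo the natural $\GL(W_b)$-action, the embedded variety $\cY_b\subseteq\bP(W_b)$ (from the first batch, using projective normality and generation in degree $\le N$) and the subscheme cut out by the thickened boundary (from the second); since the coefficients of $\cD_{\cY,b}$ lie in the fixed finite set $I$ and $p$ is fixed, this determines the stable pair $(\cY_b,\tfrac1r\cD_{\cY,b})$ up to finitely many possibilities. This produces the classifying morphism $\mu\colon B\to\mathbf{M}$, with $\mathbf{M}$ the relevant Grassmannian-type quotient of \cite{Kol90}.

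The crux is that $\mu$ has finite fibres. By the previous paragraph a fibre of $\mu$ consists of points $b$ for which the pairs $(\cY_b,\tfrac1r\cD_{\cY,b})$ lie in a fixed finite set of isomorphism classes. If such a fibre were infinite it would contain an irreducible curve; replacing it by an affine open $B'$ of its normalization, then passing to a finite cover $B''\to B'$ that trivializes the stable family---possible because stable pairs have finite automorphism groups, \cite{KP17}*{Proposition 5.5}---we may assume $(\cY,\tfrac1r\cD_\cY)_{B''}\cong(Y\times B'',D_Y\times B'')$ for a fixed stable pair $(Y,D_Y)$. Then Lemma \ref{lemma:finitedness:paper:zsolt} shows that the $p$-stable pairs occurring among the fibres of $(\cX;\cD)\to B''$ form finitely many isomorphism classes, so that $B''\to\sF_{n,p,I}$ has finite image; since $B\to\sF_{n,p,I}$, and hence $B''\to\sF_{n,p,I}$, is quasi-finite, $B''$ would be a finite set---contradicting that it is a positive-dimensional curve. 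I expect this step, namely checking that the linear-algebra datum recovers $(\cY_b,\tfrac1r\cD_{\cY,b})$ up to finite ambiguity and then combining this with Lemma \ref{lemma:finitedness:paper:zsolt}, to be the main obstacle.

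Finally, $W=q_*(\cL_\cY^{\otimes a})$ is nef by the semipositivity of direct images of relatively ample powers of the relative log-canonical sheaf of a stable family (see \cite{KP17}); feeding $W$, $\phi$ and the finiteness of $\mu$ into Koll\'ar's ampleness lemma \cite{Kol90} yields that $\det W=\det p_*(\cL^{\otimes a})$ is ample, which is the assertion. Corollary \ref{cor_determinant} moreover records a factorization of this determinant through the exact sequence of Lemma \ref{lemma_lc_restricted_to_mD_pushes_forward_to_a_vector_bundle}, an identity that is used afterwards to transfer the polarization to the moduli space of stable pairs.
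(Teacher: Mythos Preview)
Your overall strategy (Koll\'ar's ampleness lemma, semipositivity of pushforwards, finiteness of the classifying map) matches the paper's, but there is a genuine gap in the last step, and the paper's setup differs from yours in a way that is not cosmetic.

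Koll\'ar's ampleness lemma \cite{Kol90}*{3.9} asserts that $\det(\text{quotient})$ is ample, not $\det W$. In your setup the quotient is $H$, and $\det H$ is a tensor product of the determinants of several $q_*(\cL_\cY^{\otimes am})$ together with the determinants of the restriction bundles; there is no reason this should be a power of $\det p_*(\cL^{\otimes a})$. The paper handles this by building a \emph{three}-summand quotient
\[
\cQ_a \;=\; p_*(\cL^{\otimes a}) \;\oplus\; p_*\bigl(\cO_{\cX}(ak(K_{\cX/B}+\tfrac{1}{r}(1-\tfrac{m_0}{k})\cD))\bigr)\;\oplus\; p_*(\cL^{\otimes a}|_{\frac{am_0}{r}\cD}),
\]
with source $\cW_{b,c}$ a matching three-summand sum of symmetric powers, the first and third factor being the \emph{same} bundle $\Sym^b p_*(\cL^{\otimes c})$. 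The point of the middle piece is precisely Corollary~\ref{cor_determinant}: it says $\det p_*(\cL^{\otimes a})$ equals $\det(\text{middle})\otimes\det(\text{third})$, so $\det\cQ_a=(\det p_*(\cL^{\otimes a}))^{2}$, and ampleness of $\det\cQ_a$ from the ampleness lemma is exactly what is wanted. So Corollary~\ref{cor_determinant} is not a postscript ``used afterwards''; it is the reduction at the very first line of the paper's proof, and it only kicks in once the middle summand (the pushforward of the genuinely $p$-ample bundle on $\cX$) is present. Your $H$ omits this summand, so the reduction is unavailable.

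Your route to finiteness of the classifying map is different and worth noting. The paper exploits that the first and third summands of $\cW_{b,c}$ are literally the same bundle, so the structure group is block diagonal with the \emph{same} block in positions $(1,1)$ and $(3,3)$; that one block simultaneously gives an isomorphism $\cY_{x_1}\cong\cY_{x_2}$ (from $\Ker\alpha$) and carries the locus $\Gamma_{x_1}$ containing $\Supp\cD_{\cY,x_1}$ to $\Gamma_{x_2}$ (from $\Ker\gamma$), pinning down the stable pair up to finite ambiguity, after which $B\to\sF_{n,p,I}$ finite finishes. Your alternative --- trivialize the stable family along a curve in a fibre and invoke Lemma~\ref{lemma:finitedness:paper:zsolt} --- is a legitimate substitute for this step; indeed the paper uses exactly that lemma later, in the proof of Corollary~\ref{cor proj moduli}, to show $B\to\cK_{n,v,I}$ is finite. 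But this difference does not rescue the determinant issue above.
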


\begin{proof}
We plan on using Koll\'ar's Ampleness Lemma, see \cite{Kol90}.
By Corollary \ref{cor_determinant}, it suffices to show that $\det p_*(\cL^{\otimes a}) \otimes \det (p_*(\cO_\cX(ak(K_{\cX/B} + \frac{1}{r}(1-\frac{m_0}{k})\cD))))\otimes \det( p_*(\cL_{|\frac{m_0a}{r}\cD}^{\otimes a}))$ is ample.

Consider the vector bundles
\[
\cQ_a \coloneqq p_*(\cL^{\otimes a}) \oplus p_*(\cO_\cX(ak(K_{\cX/B} + \frac{1}{r}(1-\frac{m_0}{k})\cD)))) \oplus p_*(\cL_{|\frac{m_0a}{r}\cD}^{\otimes a})\text{ }\text{ }\text{ }\text{ and }
\]
\[
\cW_{b,c} \coloneqq \Sym^b( p_*(\cL^{\otimes c})) \oplus \Sym^b(\cO_\cX(ck(K_{\cX/B} + \frac{1}{r}(1-\frac{m_0}{k})\cD)) \oplus \Sym^b( p_*(\cL^{\otimes c}))
\]
for appropriate choices of $a$, $b$, and $c$.

When $a = bc$, there is a morphism $\cW_{b,c}\to \cQ_{bc}$ given by the sum of the multiplication morphisms
\[
\Sym^b( p_*(\cL^{\otimes c})) \xrightarrow{\alpha} p_*(\cL^{\otimes bc}),\quad \Sym^b( p_*(\cO_\cX(ck(K_{\cX/B} + \frac{1}{r}(1-\frac{m_0}{k})\cD))) \xrightarrow{\beta} p_*(\cO_\cX(bck(K_{\cX/B} + \frac{1}{r}(1-\frac{m_0}{k})\cD)),
\]
and the composition of $\alpha$ with the surjection in the exact sequence of Lemma \ref{lemma_lc_restricted_to_mD_pushes_forward_to_a_vector_bundle}, denoted by
\[
\Sym^b( p_*(\cL^{\otimes c})) \xrightarrow{\gamma} p_*(\cL_{|\frac{m_0bc}{r}\cD}^{\otimes bc}).
\]

First, observe that for $c$ divisible enough, the vector bundles $p_*(\cL^{\otimes c})$ and $  p_*(\cO_\cX(ck(K_{\cX/B} + \frac{1}{r}(1-\frac{m_0}{k})\cD)))$ are nef. Indeed, from the assumptions of Lemma \ref{lemma_lc_restricted_to_mD_pushes_forward_to_a_vector_bundle}, the formation of $p_*(\cL^{\otimes c})$ and $  p_*(\cO_\cX(ck(K_{\cX/B} + \frac{1}{r}(1-\frac{m_0}{k})\cD)))$ commutes with base change, so we can assume that the base $B$ is a smooth curve.
Then the statement follows from \cite{Fuj18}*{Theorem 1.11}.
Therefore, their symmetric powers and sum are nef, so $\cW_{b,c}$ is nef for $c$ divisible enough.

Since $\cL_{\cY}$ is relatively ample and $q_*(\cL_{\cY}^{\otimes m}) \cong p_*(\cL^{\otimes m})$, there is a $b_0$ such that, for every $b\ge b_0$, the map $\alpha$ is surjective.
For the same reason, if we choose $c$ large enough so that $ck(K_{\cX/B} + \frac{1}{r}(1-\frac{m_0}{k})\cD)$ is relatively very ample, also $\beta$ is surjective for $b_0=b_0(c)$ sufficiently large (see  \cite{ACH11}*{Corollary 4.5}).
Then, also $\gamma$ is surjective since it is the composition of surjective morphisms.
Therefore we have a surjection $\Phi \colon \cW_{b,c}\to \cQ_{bc}$.
We denote by $G$ the structure group of $\cW_{b,c}$.
This gives a map of sets
\[
\Psi \colon |B|\to |[\Gr(w,q)/G]|,
\]
where $w$ (resp. $q$) is the rank of $\cW_{b,c}$ (resp. $\cQ_{bc}$), and where for a stack $\cZ$ we denote by $|\cZ|$ its associated topological space.
If we show that $\Psi$ has finite fibers, then the theorem follows from \cite{Kol90}*{Ampleness Lemma, 3.9}.

Consider two points $x_1$, $x_2$ that map to the same point via $\Psi$.
Over the point $x_1$ we have the surjection $\Phi_{x_1} \colon \cW_{b,c}\otimes k(x_1)\to \cQ_{bc}\otimes k(x_1)$, and similarly we have one denoted by $\Phi_{x_2}$ over $x_2$.
Choose two isomorphisms
\[
\tau_1 \colon H^0(\bP^{N}, \cO_{\bP^{N}}(1)) \to p_*(\cL^{\otimes c})\otimes k(x_1), \text{ and }
\]
\[
\tau_1' \colon H^0(\bP^{M}, \cO_{\bP^{M}}(1)) \to  p_*(\cO_\cX(ck(K_{\cX/B} + \frac{1}{r}(1-\frac{M}{k})\cD)))  \otimes k(x_1).
\]
Similarly we define  $\tau_2$ and $\tau_2'$ for the same isomorphisms over $x_2$.
This gives an isomorphism
\[
H^0(\bP^{N}, \cO_{\bP^{N}}(b)) \oplus H^0(\bP^{M}, \cO_{\bP^{M}}(b)) \oplus H^0(\bP^{N}, \cO_{\bP^{N}}(b)) \xrightarrow{\Sym^b(\tau_1) \oplus \Sym^b(\tau_1') \oplus \Sym^b(\tau_1)} \cW_{b,c}\otimes k(x_1).
\]
Since $x_1$ and $x_2$ map to the same point via $\Psi$, using the identifications above, there is an element $g\in G$ such that $g\Ker( \Phi_{x_1}) =\Ker(\Phi_{x_2}) $.

In particular, we can choose a basis for $\cW_{b,c}\otimes k(x_1)$ by first choosing a basis for each summand of the left-hand side, choosing the same basis for the first and third summand.
Then, since $\cW_{b,c}$ is a direct sum of vector bundles, in this basis the linear transformation $g$ is block diagonal:
\[
g = \begin{bmatrix}A & 0 & 0 \\ 0 & B & 0 \\ 0 & 0 & A\end{bmatrix}.
\]
In particular, $g$ will send $\Ker(\alpha_{x_1})$ (resp. $\Ker(\gamma_{x_1})$) to $\Ker(\alpha_{x_2})$ (resp. $\Ker(\gamma_{x_2})$).
But the kernel of $\alpha_{x_1}$ corresponds to the symmetric functions of degree $b$ that vanish on a subvariety of $\bP^{n_1}$ isomorphic to $\cY_{x_1}$, which we will still denote by $\cY_{x_1}$.
From \cite{ACH11}*{Corollary 4.5}, up to choosing $b_0$ big enough, this kernel generates a graded ideal that corresponds to a subvariety isomorphic to $\cY_{x_1}$.
The same conclusion holds for $\Ker(\alpha_{x_2})$, since there is a linear transformation (given by a block of the matrix $A$) that induces an isomorphism $\Ker(\alpha_{x_1})\to \Ker(\alpha_{x_2})$.
So, in particular, this linear transformation induces a map of projective spaces, that gives an isomorphism $h \colon \cY_{x_1}\to \cY_{x_2}$.

By Corollary \ref{corollary_the_section_that_vanishes_on_the_scheme_theoretic_image_of_m0/rD_can_be_lifted_on_X}, $\Ker(\gamma_{x_1})$ contains a function that does not vanish along the generic points of the irreducible components of $\cY_{x_1}$ but, if pulled back via $\pi_{x_1} \colon \cX_{x_1}\to \cY_{x_1}$, it vanishes along $\frac{m_0a}{r}\cD_{x_1}$.
Therefore, the zero locus of the polynomials in $\Ker(\gamma_{x_1})$ has codimension 1 in $\cY_{x_1}$ and the union of its irreducible components of codimension 1, which we denote by $\Gamma_{x_1}$, contains $\supp(\frac{1}{r}\cD_{\cY,x_1})$.
Since $\Gamma_{x_1}$ has finitely many irreducible components of codimension one and the coefficients of $\cD_{\cY,x_1}$ are in the finite set $rI$, the divisor $\cD_{\cY,x_1}$ is determined up to finitely many possible choices of prime divisors and coefficients. 

A similar conclusion holds by replacing $x_1$ with $x_2$. Since in the description of $g$ the block at position (1,1) is the same as the block at position (3,3), we also know that $h(\Gamma_{x_1}) = \Gamma_{x_2}$.

Therefore the fiber of $\Psi(x_1)$ corresponds to stable pairs $(Y,D)$ in our moduli problem where $Y\cong \cY_{x_1}$, $\Supp(D)\subseteq \Supp(\Gamma_{x_1})$.
But there are finitely many such subvarieties, and since $B\to \sF_{n,p,I}$ is finite by our assumptions, the fiber $\Psi(x_1)$ has to be finite.
\end{proof}

\begin{Cor} \label{cor proj moduli}
Consider a proper DM stack $ \cK_{n,v,I}$ which satisfies the following two conditions:
\begin{enumerate}
    \item for every normal scheme $S$, the data of a morphism $f \colon S\to \cK_{n,v,I}$ is equivalent to a stable family of pairs $q \colon (\cY,\cD)\to B$ with fibers of dimension $n$, volume $v$ and coefficients in $I$; and
    \item there is $m_0\in \mathbb{N}$ such that, for every $k \in \mathbb{N}$, there is a line bundle $\cL_k$ on $\cK_{n,v,I}$ such that, for every morphism $f$ as above, $f^*\cL_k \cong \det(q_*(\omega_{\cY/B}^{[km_0]}(km_0\cD)))$.
\end{enumerate}
Then, for $m_0$ divisible enough, $\cL_k$ descends to an ample line bundle on the coarse moduli space of $\cK_{n,v,I}$ for every $k\ge 1$. In particular, the coarse moduli space of $\cK_{n,v,I}$ is projective. 
\end{Cor}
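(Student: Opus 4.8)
The plan is to deduce projectivity from Theorem~\ref{thm:main:step:proj} by producing a finite surjective morphism onto $\cK_{n,v,I}$ from a moduli stack of $p$-stable pairs and transporting the polarization along it. First I would fix auxiliary invariants: choose $p(t)\in\qq[t]$ with $p(1)=v$ and a finite set $I'\subseteq(0,1]\cap\qq$, closed under sum and containing $I$, such that for every stable pair in the moduli problem of $\cK_{n,v,I}$ the $p$-stable pair produced by Lemma~\ref{lift stable pair slc} has coefficients in $I'$ (possible by boundedness); let $r$ be its index. Let $\sG$ be the proper Deligne--Mumford stack of $p$-stable pairs with these invariants whose ample model exists and lies in $\cK_{n,v,I}$ --- over $\sG$ ample models exist by construction, so the parts of the theory requiring assumption~(A) hold unconditionally, which is why the whole argument avoids assumption~(A); cf.\ the discussion after Theorem~\ref{thm intro 2}. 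By Theorem~\ref{thm_morphism_our_stack_to_kollars_one_for_reduced_bases} the ample model is formed in families over reduced bases, compatibly with base change; combined with hypothesis~(1) and descent along a smooth cover, this yields a morphism of stacks $\Phi\colon\sG\to\cK_{n,v,I}$ sending a $p$-stable pair to its ample model. By Lemma~\ref{lift stable pair slc} $\Phi$ is surjective; it is proper (as $\sG$ is proper and $\cK_{n,v,I}$ separated) and quasi-finite by Lemma~\ref{lemma:finitedness:paper:zsolt}, hence finite.

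Next I would compare polarizations. Let $p\colon(\cX;\cD)\to\sG$ be the universal $p$-stable family and $\pi\colon(\cX,\tfrac{1}{r}\cD)\to(\cY,\tfrac{1}{r}\cD_\cY)$ the relative ample model from Theorem~\ref{thm_morphism_our_stack_to_kollars_one_for_reduced_bases}, a contraction satisfying $\pi^\ast(K_{\cY/\sG}+\tfrac{1}{r}\cD_\cY)=K_{\cX/\sG}+\tfrac{1}{r}\cD$; set $q=p\circ\pi$. By boundedness there is a positive integer $N_1$ clearing the Cartier indices of $K_{\cX/\sG}+\tfrac{1}{r}\cD$ and of $\cD$ over $\sG$ (and divisible by the constant $k_0$ occurring in Lemma~\ref{Lemma_there_is_a_section_vanishing_on_the_divisor}). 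For every multiple $N$ of $N_1$ the sheaves $\cO_\cY(N(K_{\cY/\sG}+\tfrac{1}{r}\cD_\cY))$ and $\cO_\cX(N(K_{\cX/\sG}+\tfrac{1}{r}\cD))$ are line bundles, the latter being the pullback of the former; since $\pi$ is a contraction the projection formula yields $q_\ast\cO_\cY(N(K_{\cY/\sG}+\tfrac{1}{r}\cD_\cY))\cong p_\ast\cO_\cX(N(K_{\cX/\sG}+\tfrac{1}{r}\cD))$. Taking determinants and using hypothesis~(2) of $\cK_{n,v,I}$ (with the boundary $\cD$ there playing the role of $\tfrac{1}{r}\cD_\cY$), this gives $\Phi^\ast\cL_k\cong\det\bigl(p_\ast\cO_\cX(km_0(K_{\cX/\sG}+\tfrac{1}{r}\cD))\bigr)$ whenever $m_0$ is a multiple of $N_1$. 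Moreover, by the standard descent arguments for such determinant line bundles in moduli of stable pairs (see \cite{Kol90, KP17}) and boundedness (which bounds the orders of the automorphism groups), for $m_0$ divisible enough $\cL_k$ and $\Phi^\ast\cL_k$ descend to line bundles $M_k$ on the coarse space $M$ of $\cK_{n,v,I}$ and $\tilde M_k$ on the coarse space $M_\sG$ of $\sG$, with $\bar\Phi^\ast M_k\cong\tilde M_k$, where $\bar\Phi\colon M_\sG\to M$ is the finite surjective morphism induced by $\Phi$.

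Now I would invoke Theorem~\ref{thm:main:step:proj}. Choose a scheme $Z$ with a finite surjective morphism $Z\to\sG$ (which exists, $\sG$ being a separated Deligne--Mumford stack of finite type, see \cite{stacks-project}) and pull back the universal family; the classifying morphism $Z\to\sG$ is then finite. Applying Theorem~\ref{thm:main:step:proj} to this family with $\cL\coloneqq\cO_{\cX_Z}(N_1(K_{\cX_Z/Z}+\tfrac{1}{r}\cD_Z))$ gives a positive integer $N_0$, a multiple of $N_1$, such that $\det\bigl(p_\ast\cO_{\cX_Z}(N(K_{\cX_Z/Z}+\tfrac{1}{r}\cD_Z))\bigr)$ is ample on $Z$ for every multiple $N$ of $N_0$. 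Choosing $m_0$ to be a multiple of $N_0$, for every $k\ge1$ the integer $km_0$ is a multiple of $N_0$, so by the previous paragraph $\tilde M_k|_Z\cong\Phi^\ast\cL_k|_Z\cong\det\bigl(p_\ast\cO_{\cX_Z}(km_0(K_{\cX_Z/Z}+\tfrac{1}{r}\cD_Z))\bigr)$ is ample. Since $Z\to\sG\to M_\sG$ is finite and surjective, $\tilde M_k$ is ample on $M_\sG$, which is therefore a projective scheme. Finally $\bar\Phi\colon M_\sG\to M$ is finite and surjective with $\bar\Phi^\ast M_k\cong\tilde M_k$ ample, so $M_k$ is ample on $M$ (ampleness descends along finite surjective morphisms; in characteristic $0$ one splits $\cO_M\hookrightarrow\bar\Phi_\ast\cO_{M_\sG}$ by the trace and applies the cohomological criterion). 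Thus, for every $k\ge1$, $M$ is a proper algebraic space carrying an ample line bundle, hence a projective scheme.

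The step I expect to be the main obstacle is the polarization comparison of the second paragraph: verifying that $\Phi^\ast\cL_k$ is exactly the determinant bundle whose ampleness is proved in Theorem~\ref{thm:main:step:proj}, which requires matching reflexive powers across the contraction $\pi$, keeping track of the $\tfrac{1}{r}$-rescaling of boundary coefficients, and choosing $m_0$ divisible enough to clear simultaneously the finitely many Cartier indices of the bounded family and the ``divisible enough'' thresholds of Theorem~\ref{thm:main:step:proj}, uniformly in $k$; together with the descent of these bundles to the coarse moduli spaces. I would dispatch the descent by citing the established arguments of \cite{Kol90, KP17}, since the only genuinely new input --- applying Koll\'ar's Ampleness Lemma using the relatively $S_1$ structure of the boundary of a $p$-stable pair, which is not available on the moduli of stable pairs itself (as Hassett's example shows) --- is precisely Theorem~\ref{thm:main:step:proj}.
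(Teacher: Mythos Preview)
Your overall strategy coincides with the paper's: construct a finite surjective morphism from a space of $p$-stable pairs onto $\cK_{n,v,I}$, identify the pullback of $\cL_k$ with a determinant bundle via the contraction to the ample model, invoke Theorem~\ref{thm:main:step:proj}, and descend ampleness along the finite map. The polarization comparison you sketch is exactly the mechanism the paper uses (projection formula along the contraction $\pi$), and your handling of the descent to coarse spaces is standard.

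There are, however, two genuine gaps in your construction of $\sG$. First, a single polynomial $p(t)$ with $p(1)=v$ does not suffice: the semi-canonical modification of Lemma~\ref{lift stable pair slc} produces a $p$-stable pair $(X;D)$ whose polynomial $(K_X+tD)^n$ depends on the stable pair being lifted, not just on its volume. Different stable pairs in $\cK_{n,v,I}$ can therefore lift into different stacks $\sF_{n,p_i,I'}$, so your $\Phi$ need not be surjective. Second, asserting that $\sG$ is a proper Deligne--Mumford stack is circular: even the algebraicity of $\sF_{n,p,I'}$ (Section~\ref{section functor}) rests on boundedness, which in turn uses assumption~(A) via Lemma~\ref{lemma_bound_for_epsilon}; you cannot simply carve out ``the locus where the ample model exists'' without first having the ambient stack.

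The paper sidesteps both issues by working one irreducible component $X$ of $K_{n,v,I}$ at a time. It lifts only the generic point of $X$ via Lemma~\ref{lift stable pair slc}, lands in a specific $\sF_{n,p,I}$, and takes the \emph{closure} $\cF$ of the image there; a finite cover of $\cF$ by a normal projective scheme $B$ plays the role of your $Z$. Working component-wise fixes the polynomial automatically, and taking a closure inside a single $\sF_{n,p,I}$ gives a proper DM stack for free. The Remark after the corollary then explains why assumption~(A) holds along $\cF$: it holds at the generic point by construction and propagates to the closure by \cite{HX13}. Your global approach could be repaired by taking a finite disjoint union over the polynomials arising from the bounded family of lifts and by arguing properness via the same mechanism as the Remark, but as written the proposal does not do this.
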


\begin{Remark}
As a concrete example of Corollary \ref{cor proj moduli}, one can consider $\cK_{n,v,I}$ to be any moduli space of KSB-stable pairs $\sK$, such that, if we denote by $\pi \colon (\sY,\sD)\to \sK$ its universal family, then $\omega_{\sY/\sK}^{[m_0]}(m_0\sD)$ is $\pi$-ample for $m_0$ divisible enough.
Indeed, in this case, by cohomology and base change $\pi_*\omega_{\sY/\sK}^{[m_0]}(m_0\sD)$ is a vector bundle for $m_0$ divisible enough, its formation commutes with base change from cohomology and base change, and the formation of the determinant commutes with base change as well.
\end{Remark}

\begin{proof}
The argument is divided into two steps. We denote by $K_{n,v,I}$ the coarse moduli space of $\cK_{n,v,I}$, and let $\cM$ be an irreducible component of it.

\begin{bf}Step 1.\end{bf} There is a $\bQ$-stable morphism $(\cX;\cD)\to B$ which satisfies the following conditions:
\begin{enumerate}
    \item $B$ is normal and projective,
    \item The map $B\to \sF_{n,p,I}$ is finite,
    \item There is a dense open subset $U\subseteq B$ and a stable family $(\cY_U,\cD_{\cY, U})$ which satisfies the assumptions of Theorem \ref{thm_morphism_our_stack_to_kollars_one_for_reduced_bases}, and
    \item The family $q \colon (\cY_U,\cD_{\cY, U})\to U$ induces a map $U\to \cK_{n,v,I} \to K_{n,v,I}$ which dominates $X$.
\end{enumerate}

Consider the generic point of $\cM$, and consider the stable pair $(\cZ_\eta, \cD_{\cZ,\eta})$ it corresponds to. By Lemma \ref{lift stable pair slc}, there is the spectrum of a field $\spec(\mathbb{F})$ and a $\bQ$-stable family over it $f \colon (\cX_\eta; \cD_\eta) \to \spec(\mathbb{F})$ such that the
canonical model of $(\cX_\eta; \cD_\eta)$ is $(\cZ_\eta, \cD_{\cZ,\eta})$. The family $f$ induces a morphism $\spec(\mathbb{F}) \to \sF_{n,p,I}$, and we can take $\cF$ to be the closure of its image. This is still a proper DM stack, it admits a finite and surjective cover by a scheme $B'\to \cF$ with $B'$ a normal and \emph{projective} scheme using \cite{LMB18}*{Theorem 16.6}, Chow's lemma and potentially normalizing. Up to replacing $\mathbb{F}$ with a finite cover of it, we can lift $\spec(\mathbb{F})\to \cF$ to $\spec(\mathbb{F})\to B'$, and consider $B$ an irreducible component of $B$ containing the image of $\spec(\mathbb{F})\to B'$.

The morphism $B\to \sF_{n,p,I}$ induces a $\bQ$-stable family $(\cX;\cD)\to B$, and by construction its generic fiber admits an canonical model.
Then such an canonical model can be spread out: there is an open subset $U\subseteq B$ and a family $(\cY_U,\cD_{\cY,U})$ as in Theorem \ref{thm_morphism_our_stack_to_kollars_one_for_reduced_bases}, and the generic fiber $(\cY_U,\cD_{\cY,U})\to U$ is isomorphic to $(\cZ_\eta, \cD_{\cZ,\eta})$. Therefore the image of the corresponding map $U\to \cK_{n,v,I}\to K_{n,v,I}$ contains the generic point of $\cM$, and since both $U$ and $\cM$ are irreducible, $U\to X$ is dominant.

\begin{bf}Step 2.\end{bf} The map $U\to \cK_{n,v,I}$ extends to a finite map $\Phi \colon B\to \cK_{n,v,I}$.

The extension follows from Theorem \ref{thm_morphism_our_stack_to_kollars_one_for_reduced_bases}. To prove that $\Phi$ is finite, we can use Lemma \ref{lemma:finitedness:paper:zsolt}. Indeed, if it was not finite, there would be a curve contained in a fiber of $\Phi$. But then, up to replacing $C$ with an open subset of it, there would be a stable pair $(Y,D_Y)$ and a $\bQ$-stable family $g \colon (\cX;\cD)\to C$ as in Lemma \ref{lemma:finitedness:paper:zsolt}. Therefore there would be finitely many isomorphism types of $\bQ$-stable pairs in the fibers of $g$: this contradicts point (2) above.

\begin{bf}End of the proof.\end{bf} From Theorem \ref{thm:main:step:proj}, up to replacing $m_0$ with a multiple (which depends only on $\cM$), the line bundle $\Phi^*\cL_k$ is ample. But a multiple of $\cL_k$ descends to a line bundle on $K_{n,v,I}$. In other terms, there is a line bundle $G$ on $K_{n,v,I}$ whose pull-back is $\cL_k^{\otimes c}$ for a certain $c>0$. Therefore $G|_{\cM}$ is ample since it is ample once pulled-back via the \emph{finite} map $B\to \cM$. But if a line bundle is ample when restricted to each irreducible component, it is ample.
\end{proof}

\begin{Cor}\label{cor_projectivity_our_moduli} The stack $\sF_{n,p,I}$ admits a projective coarse moduli space.
\end{Cor}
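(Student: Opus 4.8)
The plan is to deduce this from Theorem~\ref{thm:main:step:proj}, following the same template as the proof of Corollary~\ref{cor proj moduli} but without passing to the moduli of stable pairs. Under the standing hypotheses of this section (assumption~(A), and $I$ closed under sum), $\sF_{n,p,I}$ is a proper Deligne--Mumford stack by the corollary to Theorem~\ref{Thm_properness}, so by Keel--Mori it admits a coarse moduli space $F_{n,p,I}$, a proper algebraic space, together with a finite surjective coarse moduli map $\rho\colon\sF_{n,p,I}\to F_{n,p,I}$. Since $F_{n,p,I}$ has finitely many irreducible components, and since a line bundle on a proper algebraic space is ample once it is ample on each irreducible component, and a line bundle is ample on such a component once its pullback under a finite surjective map from a projective scheme is ample, it suffices to produce a line bundle $G$ on $F_{n,p,I}$ together with, for each component, a finite surjective map onto it from a projective scheme along which $G$ pulls back to an ample bundle.

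The first step is to write down the natural polarization on the stack. For $k,a$ divisible enough I would set $\cL_{(\cX;\cD)/B}\coloneqq\omega_{\cX/B}^{[km]}\otimes\cI_\cD^{[-km/r]}$ and $\lambda_a\coloneqq\det f_*\big(\cL_{(\cX;\cD)/B}^{\otimes a}\big)$ for an object $(\cX;\cD)\to B$. That this is a well-defined line bundle on $\sF_{n,p,I}$ uses: $\cL_{(\cX;\cD)/B}$ is a line bundle whose formation commutes with arbitrary base change, by Koll\'ar's conditions and Remark~\ref{remark_for_m_big_enough_kollar_comndition_guarantees_cartier}; its fiberwise restriction $\cO_{X_b}(km(K_{X_b}+D_b))$ is semi-ample by assumption~(A), so for $a$ divisible enough (uniformly, by boundedness, Theorem~\ref{thm_boundedness}) the higher cohomology of its $a$-th power vanishes on every fiber by relative Serre vanishing on the ample model; hence $f_*(\cL_{(\cX;\cD)/B}^{\otimes a})$ is a base-change-compatible vector bundle (this is also the content of Lemma~\ref{lemma_lc_restricted_to_mD_pushes_forward_to_a_vector_bundle} and Theorem~\ref{thm_morphism_our_stack_to_kollars_one_for_reduced_bases} over the bases where they apply, or one can build it on a smooth atlas using the simultaneous stable model of Lemma~\ref{Lemma:slc:canonical:model:for:p:pairs}), and its determinant is a line bundle.

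Next I would treat one irreducible component $X\subseteq F_{n,p,I}$ at a time. Because $\rho$ is surjective on geometric points, the generic point of $X$ is the image of a $p$-stable pair over a field $\mathbb{F}$, giving $\spec(\mathbb{F})\to\sF_{n,p,I}$; let $\cF$ be the closure of its image, a reduced, irreducible, proper Deligne--Mumford substack whose image in $F_{n,p,I}$ is $X$. Exactly as in Step~1 of the proof of Corollary~\ref{cor proj moduli} --- via \cite{LMB18}*{Theorem 16.6}, Chow's lemma and normalization --- one obtains a finite surjective morphism $B\to\cF$ from a normal projective scheme; let $(\cX_B;\cD_B)\to B$ be the induced $p$-stable family. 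By construction $B\to\sF_{n,p,I}$ is finite (it is a closed immersion followed by $B\to\cF$ followed by the closed immersion $\cF\hookrightarrow\sF_{n,p,I}$), and the composite $\bar g\colon B\to\sF_{n,p,I}\xrightarrow{\rho}F_{n,p,I}$ is a finite surjection onto $X$. Spreading out the ample model of the generic fiber of $(\cX_B;\cD_B)\to B$ (which exists by assumption~(A)) over a dense open $U\subseteq B$ places us in the hypotheses of Theorem~\ref{thm_morphism_our_stack_to_kollars_one_for_reduced_bases}, hence of Lemma~\ref{lemma_lc_restricted_to_mD_pushes_forward_to_a_vector_bundle}, Corollary~\ref{cor_determinant} and Theorem~\ref{thm:main:step:proj} (whose extra finiteness hypothesis is satisfied). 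Theorem~\ref{thm:main:step:proj} then gives, for $a$ divisible enough, that $\det p_*(\cL^{\otimes a})=(B\to\sF_{n,p,I})^*\lambda_a$ is ample on $B$. Finally, a fixed power $\lambda_a^{\otimes N}$ descends to a line bundle $G$ on $F_{n,p,I}$ --- with $N$ uniform, since automorphism orders in this bounded family are bounded, as in the proof of Corollary~\ref{cor proj moduli} --- so that $\bar g^*(G|_X)\cong\big((B\to\sF_{n,p,I})^*\lambda_a\big)^{\otimes N}$ is ample and $G|_X$ is ample; choosing $a$ divisible enough to work for all (finitely many) components simultaneously, $G$ is ample on each component of $F_{n,p,I}$, hence ample, so $F_{n,p,I}$ is a projective scheme.

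I expect the substantive ingredient, Theorem~\ref{thm:main:step:proj} (resting on Koll\'ar's ampleness lemma), to carry the real weight, so that the only genuine difficulty in assembling this corollary is bookkeeping: arranging the finite cover $B$ of each component so that $B\to\sF_{n,p,I}$ is finite, $B\to X$ is finite surjective, and a stable model exists over a dense open of $B$ (so that Theorem~\ref{thm_morphism_our_stack_to_kollars_one_for_reduced_bases} is applicable), and then carrying out the standard stack-to-coarse-space steps --- well-definedness of $\lambda_a$ as a line bundle on $\sF_{n,p,I}$, descent of a power of it to $F_{n,p,I}$, descent of ampleness along the finite surjections $\bar g$, and uniformity of the exponent $a$ across the finitely many components.
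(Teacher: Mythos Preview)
Your approach is correct but takes a different route from the paper. The paper does \emph{not} polarize with the full log canonical $\lambda_a=\det f_*\big(\cO_{\cX}(ak(K_{\cX/B}+\tfrac{1}{r}\cD))\big)$ and apply Theorem~\ref{thm:main:step:proj} directly to covers of $F_{n,p,I}$. Instead it uses the perturbed polarization $\sG=\det\big(p_*\cO_{\sX}(m(K_{\sX/\sF}+\tfrac{1-\epsilon_0}{r}\sD))\big)$, observes that $(X,(1-\epsilon_0)D)$ is already a stable pair, gets a finite morphism $\sF_{n,p,I}^n\to\cK_{n,p(1-\epsilon_0),I}$ to a moduli of stable pairs, and then invokes Corollary~\ref{cor proj moduli} for that $\cK$ (whose proof is where Theorem~\ref{thm:main:step:proj} is actually applied, to a $p$-stable lift of the stable family there). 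What the paper's detour buys is that $\sG$ is unambiguously a line bundle on $\sF_{n,p,I}$ over \emph{arbitrary} bases with no extra work: $K_{\cX/B}+\tfrac{1-\epsilon_0}{r}\cD$ is relatively \emph{ample}, so Serre vanishing and cohomology-and-base-change apply directly. Your $\lambda_a$ uses the full log canonical, which is only relatively semi-ample; in particular, your fiberwise vanishing claim ``$H^i(\cX_b,\cL_b^{\otimes a})=0$ for $i>0$'' is not justified by Serre vanishing on the ample model, since the contraction $\pi_b\colon\cX_b\to\cY_b$ need not satisfy $R^j\pi_{b,*}\cO_{\cX_b}=0$. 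Your parenthetical fallback --- build $\lambda_a$ on a smooth atlas via Lemma~\ref{Lemma:slc:canonical:model:for:p:pairs} and Theorem~\ref{thm_morphism_our_stack_to_kollars_one_for_reduced_bases} and descend --- is the correct route and suffices for your application (you only pull $\lambda_a$ back to the normal projective $B$, where these results apply). In exchange for this extra care, your argument is more self-contained: it does not require constructing or assuming an auxiliary moduli space of stable pairs. One small inaccuracy: the coarse moduli map $\rho$ is not literally a finite morphism of stacks (think $B\mu_n\to\spec k$); what you use, and what holds, is that it is a proper bijection on geometric points, so the composite $B\to\sF_{n,p,I}\to F_{n,p,I}$ is proper with finite fibers, hence finite.
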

\begin{proof}
We will denote by $F_{n,p,I}$ the coarse moduli space of $\sF_{n,p,I}$, which exists from Keel--Mori's theorem, and with $F_{n,p,I}^n$ the one of the normalization $\sF_{n,p,I}^n$ of $\sF_{n,p,I}$.

Consider $\epsilon_0$ such that $K_X+(1-\epsilon_0)D$ is ample for every $\bQ$-pair $(X;D)$ parametrized by $\sF_{n,p,I}$. Recall that such an
$\epsilon_0$ exists from Lemma \ref{lemma_bound_for_epsilon} (or from boundedness). In particular, if we denote with $(\sX;\sD)\to \sF_{n,p,I}$ the universal family, for $m$ divisibie enough the formation of $\sG \coloneqq  \det(p_*(\cO_{\sX}(m(K_{\sX/\sF_{n,p,I}} + \frac{1-\epsilon}{r}\sD))))$ commutes with base change and an high enough power of $\sG$ descends to a line bundle $G$ on $F_{n,p,I}$. Therefore to prove that $G$ is ample, it suffices to replace $F_{n,p,I}$ (resp. $G$) with $F_{n,p,I}^n$ (resp. the pull-back $G^n$ of $G$ via $F_{n,p,I}^n \to F_{n,p,I}$).

Consider a proper DM stack $\cK_{n,p(1-\epsilon_0),I}$ as in Corollary \ref{cor proj moduli}. When $D$ has coefficient $1-\epsilon_0$ the pairs parametrized by $\sF_{n,p,I}^n$ are stable of volume $p(1-\epsilon_0)$, so we have a map $\sF_{n,p,I}^n\to \cK_{n,p(1-\epsilon_0),I}$ which is finite, as different points of $\sF_{n,p,I}$ parametrize different $\bQ$-pairs, and the normalization is a finite morphism. From Corollary \ref{cor proj moduli}, the formation of $\cL_k$ commutes with base change, so $\cL_k$ pulls back to $G^n$. Then $G^n$ is ample as it is the pull-back of an ample line bundle via a finite morphism.\end{proof}

\bibliographystyle{amsalpha}
\bibliography{jag}

\end{document}